\documentclass[11pt]{amsart}
\usepackage[margin=1in]{geometry}
\usepackage{graphicx,amsfonts, amsmath, amssymb, amsthm, enumerate, xcolor,enumitem}
\usepackage[T1]{fontenc}
\usepackage[indent]{parskip}
\usepackage{hyperref}
\usepackage{cleveref}
\usepackage{tikz}

\setlist[enumerate]{topsep=1ex,itemsep=0ex,partopsep=1ex,parsep=1ex}
\def\norm#1{\left\|#1\right\|}

\newtheorem{thm}{Theorem}[section]
\newtheorem{lem}[thm]{Lemma}

\theoremstyle{definition}
\newtheorem{dfn}[thm]{Definition}

\newtheorem*{prb*}{Problem}
\newtheorem*{thm*}{Theorem}
\newtheorem{ex}[thm]{Example}
\theoremstyle{remark}
\newtheorem{rmk}[thm]{Remark}
\newcommand{\Acover}{A_{\mathrm{cover}}}
\newcommand{\alpsmall}{\alpha_{1}}
\newcommand{\Asmall}{A_{\mathrm{small}}}
\newcommand{\alpcover}{\alpha_{2}}
\newcommand{\Tsmall}{\Theta_{\mathrm{small}}}
\newcommand{\Tcover}{\Theta_{\mathrm{cover}}}
\newcommand{\thetamin}{\theta_{\mathrm{min}}}
\newcommand{\thetamax}{\theta_{\mathrm{max}}}
\newcommand{\thetatot}{\theta_{\mathrm{tot}}}

\newcommand{\aminus}{\alpha^-}
\newcommand{\aplus}{\alpha^+}

\newcommand{\newA}{F}
\DeclareMathOperator{\inter}{int}
\DeclareMathOperator{\dist}{dist}

\providecommand{\al}{\alpha}
\providecommand{\be}{\beta}
\providecommand{\de}{\delta}
\providecommand{\De}{\Delta}

\providecommand{\Ga}{\Gamma}
\providecommand{\e}{\epsilon}
\providecommand{\emp}{\varnothing}

\providecommand{\R}{\mathbb{R}}

\providecommand{\Z}{\mathbb{Z}}
\providecommand{\N}{\mathbb{N}}

\providecommand{\T}{\mathbb{T}}

\newcommand{\cA}{\mathcal{A}}
\newcommand{\cB}{\mathcal{B}}

\newcommand{\cF}{\mathcal{F}}

\newcommand{\cL}{\mathcal{L}}

\newcommand{\nbhd}[2]{\operatorname{nbhd}_{#2}(#1)}

\graphicspath{{figures/}}

\numberwithin{equation}{section}

\title[Generalized prescribed projections and efficient covering] {Prescribed projections for a general class of maps and efficient covering by variable plane curves}

\author{Alan Chang}
\address{Department of Mathematics, Washington University in St. Louis, St. Louis, MO}
\email{alanchang@math.wustl.edu}

\author{Robert Fraser}
\address{Department of Mathematics, Wichita State University, Wichita, KS}
\email{robert.fraser@wichita.edu}

\author{Alex McDonald}
\address{Department of Mathematics, Kennesaw State University, Marietta, GA}
\email{amcdon79@kennesaw.edu}

\author{Mark Meyer}
\address{LAMA, Univ Gustave Eiffel, Univ Paris Est Creteil, 77447 Marne-la-Vall´ee, France.
}
\email{mark.meyer@univ-eiffel.fr}

\author{Krystal Taylor}
\address{Department of Mathematics, The Ohio State University, Columbus, OH}
\email{taylor.2952@osu.edu}

\thanks{
Chang was partially supported by NSF grant DMS-2247233.  
Fraser: This material is based upon work supported by the National Science Foundation under Award No. DMS-2453364.  
McDonald is supported by an AMS-Simons
Research Enhancement Grant for Primarily Undergraduate Institution Faculty.  
Meyer is supported by the NSF MSPRF grant number 2502794.  
Taylor is supported in part by the Simons Foundation Grant GR137264. 
Part of this work was done at the 48th Summer Symposium in Real Analysis at Washington University in St. Louis.
The symposium was funded by NSF grant DMS-2606135.}

\begin{document}
\begin{abstract}
We prove a prescribed projection theorem for any class of non-linear projections defined on open subsets of the plane which satisfies some mild geometric conditions.  This generalizes Davies's classic result on prescribed projections for orthogonal projections, as well as a recent result of authors, Chang, McDonald, and Taylor, 
on a specific class of non-linear projection maps.  We also give several geometric applications, including a proof that certain classes of plane curves can be used to cover arbitrary measurable sets with measure zero error.
\end{abstract}
\maketitle

\section{Introduction}
A major theme in the intersection of harmonic analysis and geometric measure theory is the use of measure theoretic properties of projections to study the geometry of a set.  A foundational result in this area is \textit{Marstrand's Theorem}.  Let $\pi_\theta:\R^2\to \R$ denote orthogonal projection in the direction $\theta$, and let $E\subset \R^2$ have Hausdorff dimension $s$.   Marstrand \cite{MR63439} proved that if $s>1$ then $\pi_\theta(E)$ has positive measure for almost every $\theta$, and if $s\leq 1$ then $\pi_\theta(E)$ has dimension $s$ for almost every $\theta$.  In particular, if $s\neq 1$, then there is a full-measure set $\Theta$ of directions such that either $|\pi_\theta(E)|>0$ for all $\theta\in \Theta$, or $|\pi_\theta(E)|=0$ for all $\theta\in \Theta$ (here and throughout, we use vertical bars $|\cdot|$ to denote the Lebesgue measure of a set).  When $s=1$, however, the situation is considerably more subtle.  Marstrand's theorem guarantees that $\pi_\theta(E)$ has dimension 1 for almost every $\theta$, but does not allow one to distinguish between a one-dimensional set with measure zero and one with positive measure.  A remarkable result due to Davies \cite{Davies52} (later generalized to higher dimensions by Falconer \cite{Falconer86}) shows that, in fact, information about the projection in one direction may tell us very little about projections in other directions.  In particular, Davies showed that for any family $\{A_\theta\}_{0\leq \theta<\pi}$ of subsets of $\R$ satisfying a mild measurability condition, there exists $E\subset \R^2$ such that for almost every $\theta$, we have $\pi_\theta(E)\supset A_\theta$ and $|\pi_\theta(E)\setminus A|=0$.  In other words, it is possible to choose targets for the projections $\pi_\theta(E)$ essentially independently, and find a single set $E$ which hits those targets up to measure zero error.  In particular, we may choose $A_\theta$ to have measure zero for many directions and to have positive measure for many other directions.  

We refer to results like Davies's as \textit{prescribed projection theorems}.  Such theorems are closely connected to natural geometric questions.  By point-line duality, Davies's theorem is readily seen to be equivalent to the following result: For any measurable $F\subset \R^2$, there exists a set $E\subset \R^2$ which is a union of lines and satisfies $E\supset F,|E\setminus F|=0$.  We call such results \textit{efficient covering theorems}.  

Efficient covering theorems are closely connected to other problems, such as Kakeya and Nikodym problems, which require the construction of sets which have many lines or line segments but small measure. 
For a thorough exposition of the relationship between 
prescribed projection results and the study of Nikodym sets, see \cite[Chapter 7]{FalconerGFS}).  

In recent years, there has been considerable interest in studying classical projection theorems for non-linear projections \cite{BV11, BT23, CDT22, DT22, HJJL12, LMT26, PS00}, as well as related non-linear analogues of Kakeya/Nikodym-type constructions \cite{CC19, CG24, CGY25, Zah23}.  
Authors Chang, McDonald, and Taylor  \cite{CMT23} proved a prescribed projection result for the family of curve projection maps 
\[
\Phi_\al(x_1,x_2)=f(x_1-\al)+x_2,
\]
where $f$ is a $C^1$ function on a compact interval with monotone derivative and $\alpha$ is an index for the family of maps.  This was motivated by the following efficient covering problem: \textit{What curves $\Ga$ have the property that an arbitrary measurable $F\subset \R^2$ can be efficiently covered by translates $(\Ga+x)$?}  The authors showed that it is sufficient for $\Ga$ to be the graph of a function $f$ with the aforementioned properties.

The goal of this paper is to generalize the non-linear prescribed projection theorem from \cite{CMT23} to a much broader class of projections, which can be defined on a broader class of domains.  This presents many obstacles, both of a technical and conceptual nature, and requires several new ideas.  In Section \ref{applications}, we provide several applications to geometric problems.

\subsection{Definitions and main theorem}
\label{assumptions}
Throughout, $D\subset \R^3$ is a fixed open set, and $\phi:D\to\R$ is a fixed function.  We will view $\R^3$ as $\R\times \R^2$, and write elements as $(\al,x)$ where $\al\in\R$ and $x=(x_1,x_2)\in \R^2$.  For fixed $\al$, we let $D_\al=\{x:(\al,x)\in D\}$, and we define $\cA=\{\al:D_\al\neq\emp\}$.  Finally, for $\al\in \cA$, we define the generalized projection $\phi_\al:D_\al\to \R$ by $\phi_\al(x)=\phi(\al,x)$. 

We write $\nabla\phi_\al(x) = \nabla_x \phi(\alpha,x)$ for the gradient in the $x$-variable only. We say $\phi$ is \textbf{non-stationary} if $\nabla\phi_\al(x) \neq 0$ for all $(\al,x)\in D$.

By \textbf{direction} of a vector $v\in \R^2\setminus\{0\}$, we mean the equivalence class of $\frac{v}{|v|}$ under the equivalence relation defined by identifying antipodal points of the unit circle.  Thus, a direction can be represented by a real number modulo $\pi$.  Given such a representation and a non-stationary differentiable function $\phi$, we let $\theta_\al(x)$ denote the number representing the direction perpendicular to $\nabla \phi_\al(x)$; that is, $\theta_\al(x)$ is the direction of the tangent to the level curve $\phi_\al^{-1}(\phi_\al(x))$ at the point $x$.

\begin{dfn}[Angle monotonicity condition]
\label{dfn:angle monotonicity}
We say that a non-stationary $C^1$ function $\phi : D \to \R$ satisfies the \textbf{angle monotonicity condition} if for each fixed $x$, there is a choice of representatives $[a, a+\pi)$ for $\R/\pi\Z$ such that the function $\alpha \mapsto \theta_{\alpha}(x)$ is a strictly increasing function $\{\alpha : (\alpha,x) \in D\} \to [a,a+\pi)$.
\end{dfn}

With these definitions in place, we are ready to state our main theorem.

\begin{thm}[Generalized prescribed projection theorem]
\label{thm:main}
Let $D \subset \R^3$ be an open set and let $\phi : D \to \R$ be a $C^1$ function such that:
\begin{enumerate}
\item \label{cond nsam} $\phi$ is non-stationary and satisfies the angle monotonicity condition.
\item \label{cond ext} $\phi$ and $\nabla_x\phi$ both extend continuously to the closure $\overline D$, and $\nabla_x \phi$ is nonzero on $\overline D$. 
\item \label{cond conv} $D_\alpha$ is convex for all $\alpha \in \R$.
\end{enumerate}
Suppose $\{F_\al\}_{\al\in\R}$ is a family of subsets of $\R$ such that $F_\al\subset \phi_\al(D_\al)$ for each $\al$, and
$
\bigcup_{\alpha\in\R} (\{\alpha\}\times F_\alpha)
$
is a measurable subset of $\R^2$.  Then there exists a Borel set $E\subset \R^2$ such that
\begin{align*}
&\phi_\alpha(E \cap D_\alpha) \supset F_\alpha &&\text{for all } \alpha \in \R
\\
&|\phi_\alpha(E \cap D_\alpha)\setminus F_\alpha|=0
&&\text{for almost every } \alpha \in \R.
\end{align*}
\end{thm}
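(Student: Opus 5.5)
We follow the Davies strategy, adapted as in \cite{CMT23}. First reduce to a single fundamental lemma. Writing $\cF=\bigcup_\al \{\al\}\times F_\al$, we would like to find $E$ so that for every $\al$ the set $\phi_\al(E\cap D_\al)$ covers $F_\al$ with measure zero excess. The plan is to first prove a \emph{covering-with-small-error lemma}:
\begin{quote}
For every compact $K\subset \cF$ (or, more conveniently, every finite union of rectangles $R=I\times J$ with $\{\al\}\times J\subset$ the $\alpha$-section of the image of $\phi$) and every $\e>0$, there is a finite union of line segments (or small sets) $P$ such that $\phi_\al(P\cap D_\al)\supset J$ for all $\al\in I$, while $\int |\phi_\al(P\cap D_\al)\setminus J|\,d\al<\e$, and $P$ lies in a prescribed open set $U$ with $\phi_\al(U\cap D_\al)\supset J$.
\end{quote}
Once this is available, a standard limiting argument gives the theorem. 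Write $\cF$ up to a null set as a $G_\delta$-type limit of open sets $\cF\subset O_n$ with $|O_n\setminus\cF|\to0$. Cover $O_n$ by countably many rectangles and apply the lemma to each, with errors summing to $2^{-n}$. Let $E=\bigcap_n\bigcup(\text{pieces at stage }n)$, using closed pieces and compactness (nested construction, each stage inside a neighborhood of the previous) so that the covering $\phi_\al(E\cap D_\al)\supset F_\al$ persists in the limit. The excess is contained in $\bigcap_n$ of sets of $\al$-integrated measure $\le |O_n\setminus \cF|+2^{-n}\to 0$, giving measure zero for a.e.\ $\al$ by Fubini. To get the covering for \emph{every} $\al$ rather than a.e., one adds, for the exceptional null set of pairs, the full preimages; convexity and condition \eqref{cond ext} will be used to ensure that $\phi_\al^{-1}(t)\cap D_\al$ are connected arcs, so this step is manageable. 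The measurability and Borel property of $E$ follow from the construction.

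The core of the argument is the lemma, and here the angle monotonicity replaces the ``directions'' in Davies's Venetian blind construction. Fix a small rectangle $I\times J$, and a point $x_0$ with $\phi_{\al_0}(x_0)\in J$. By continuity of $\nabla_x\phi$ on $\overline D$ and its nonvanishing, locally $\phi_\al$ behaves like a linear projection with direction $\theta_\al(x)$ up to small $C^1$ error, uniformly. By angle monotonicity, $\al\mapsto\theta_\al(x)$ sweeps an interval strictly, so on a sub-interval of $\al$ of length $\de$ the level curves through a short segment $\sigma$ (tangent to the level curve of $\phi_{\al^*}$) rotate by an amount comparable to $\de$. A segment aligned with the level direction of $\phi_{\al^*}$ has tiny image under $\phi_{\al^*}$ but covers an interval under nearby $\phi_\al$; iterating the Venetian blind (replacing each segment by many short segments in slightly rotated directions, in the style of Davies/Kaufman–Falconer) yields a set whose $\phi_\al$-image covers $J$ for all $\al\in I$ but whose images have small total excess. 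Convexity of $D_\al$ is used so that segments and their local modifications stay inside $D_\al$ for the relevant $\al$ and level sets are connected.

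The main obstacle is making the Venetian blind step uniform for nonlinear $\phi$: in the linear case the image of a segment is exactly computable, whereas here one must control how $C^1$ errors in $\phi$ (only continuity of the gradient, no second derivatives) accumulate through the iterations, and show the excess measure $\int|\phi_\al(P)\setminus J|\,d\al$ still tends to zero. I would first try to handle this by choosing the scale of each iteration small enough, via uniform continuity of $\nabla_x\phi$ on compact parts of $\overline D$, that at that scale each $\phi_\al$ is within $\eta$ of an affine map in $C^1$ on each piece, and then absorb the errors by a geometric series in $\eta$.
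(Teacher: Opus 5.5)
Your outer architecture (Venetian blinds feeding a nested approximation of $\cF$ by open sets) is the right one, but the core covering lemma is neither correctly targeted nor actually proved.

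\textbf{The rectangle target.} A Venetian blind construction only \emph{preserves} the projections of the set it starts from. From a curve $G$ it yields $G'$ with $\phi_\al(G')\supset\phi_\al(G)$ for $\al$ in a covered interval, and $|\phi_\al(G')|$ small on a separated compact parameter set (\Cref{itervb}). Your starting object, a short segment tangent to a level curve of $\phi_{\al^*}$, has a bow-tie-shaped $\phi^*$-image whose $\al$-slices collapse near $\al^*$. So nothing in your sketch gives $\phi_\al(P)\supset J$ for \emph{all} $\al\in I$ with small excess. In general no curve has $\phi_\al(G)=J$ for all $\al\in I$: if $\partial_\al\phi>0$, then $\max_G\phi_\al$ is strictly increasing in $\al$. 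A rectangle target therefore needs a further decomposition that you do not supply. The paper avoids rectangles. Its base sets are $U_{y,G,\eta}=\inter[((\al_0+\eta,\al_0+2\eta)\times\R)\cap\phi^*(G)]$ with $G$ a sub-curve of a fiber $\phi_{\al_0}^{-1}(\be_0)$. These form a base (\Cref{base}, via \Cref{differenceset}), and the excess inside the covered strip is $O(\de)$ essentially because $E\subset\nbhd{G}{\de}$.

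\textbf{The blind step.} This is also missing its key ingredients. Your claim that the level directions rotate ``by an amount comparable to $\de$'' over an $\al$-interval of length $\de$ does not follow from angle monotonicity. That condition is strict monotonicity of a merely continuous function and gives no rate. The paper instead works in angle space: compactness gives a gap $\sigma$ between $\Tcover$ and the complementary arc $\Tsmall$ (\Cref{thetaseplem}), and the blinds step through $\Tsmall$ in angle increments $\de$. Nor do $C^1$ errors accumulate, so there is nothing to absorb by a geometric series:
coverage for covered $\al$ is \emph{exact} at every step, because each fiber of $\phi_\al$ makes angle at least $\sigma$ with the new blinds and therefore must cross a blind lengthened by the factor $1+\de/\sigma$ (\Cref{geomlem phi});
smallness for $\al\in\Asmall$ comes in one shot from the rectangle squishing condition, applied at the first generation whose blind direction is within $\de$ of $\theta_\al$ (the directions form a $\de$-net of $\Tsmall$);
the total length stays below $\frac{2|G|}{\sigma}e^{\thetatot/\sigma}$ independently of $\e$, which is what turns ``$\e$ per unit length'' into a small total.

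\textbf{The limiting argument.} Your step ``add the full preimages for the exceptional null set of pairs'' would fail. By \Cref{differenceset}(a), $\phi^*$ of any sub-curve of a fiber has nonempty interior, so adding even one fiber arc to $E$ typically creates excess of positive measure. (Convexity of $D_\al$ does not make fibers connected anyway, e.g.\ $x_2-x_1^2$ on the strip $1<x_2<2$.) The step is also unnecessary. Since $\cF\subset O_n$ for every $n$, every point of $\cF$ is covered once the construction is genuinely nested as in condition (iii) of \Cref{definition: system-of-disks}. At stage $n$, for each \emph{open} disk $B$ of stage $n-1$, apply the covering lemma to the open set $O_n\cap\phi^*(B)$ (\Cref{lem: openness of phi star}), with all new pieces placed inside $B$. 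Then each $(\al,\be)\in\cF$ determines a chain $B_1\supset B_2\supset\cdots$ with $(\al,\be)\in\phi^*(B_m)$, and compactness gives $x\in\bigcap_m\overline{B_m}$ with $\phi_\al(x)=\be$. Placing ``each stage inside a neighborhood of the previous'' does not produce such chains. The pieces carried between stages must also be open (disks, not segments) so that later pieces can sit inside them. Passing to closed disks costs only a null set by \Cref{difference}, and $E=\bigcap_m\bigcup_{B\in\cB_m}\overline B$, with $\cB_m$ the stage-$m$ disks, is then automatically Borel.
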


\begin{rmk}
We can replace conditions (2) and (3) in \Cref{thm:main} with a slightly weaker condition; see \Cref{rmk:rectangle squishing condition} and \Cref{thm:main v2}. 
\end{rmk}

\subsection{Notation and conventions}

Given $\de>0$ and $E\subset \R^2$, we write $\nbhd{E}{\de}$ for the open $\de$-neighborhood of $E$.  We denote the open disk of radius $r>0$ centered at $x\in \R^2$ by $B(x,r)$, and we denote the closed disk by $\overline{B}(x,r)$.  

We write $X\lesssim Y$ to mean there exists a constant $C>0$ such that $X\leq CY$.  We write $X\gtrsim Y$ to mean $Y\lesssim X$, and $X\approx Y$ to mean both $X\lesssim Y$ and $Y\lesssim X$ hold.  Throughout, we will consider the function $\phi$ and domain $D$ to be fixed; thus, constants will always be allowed to depend on these quantities.  When more parameters arise, we will always specify which ones the implied constants may depend on.

\subsection{Acknowledgments}

We thank Shaoming Guo and Joshua Zahl for helpful discussions. AI tools were used for proofreading and generating figures.

\section{Preliminary geometric lemmas}

\begin{dfn}[$\R/\pi\Z$]
We identify $\R/\pi\Z$ with the space of unoriented directions in $\R^2$, via $\theta\mapsto \pm (\cos\theta,\sin\theta)$.

\begin{enumerate}
\item Cyclic ordering: For $\theta_1, \ldots, \theta_n \in \R/\pi\Z$, we write 
\[
\theta_1 < \theta_2 < \cdots < \theta_n
\]
to mean that if we start at $\theta_1$ and move counter-clockwise to $\theta_n$, we pass through $\theta_1, \ldots, \theta_{n}$ in that order. Equivalently, there exist representatives $\theta_1', \ldots, \theta_n' \in \R$ such that $\theta_1' < \cdots < \theta_n'$, and $\theta_n' - \theta_1' < \pi$. We make a similar definition for
\[
\theta_1 \leq \theta_2 \leq \cdots \leq \theta_n.
\]

\item Intervals/arcs: For $\theta_1, \theta_2 \in \R/\pi\Z$, we define the interval 
\[
[\theta_1, \theta_2] :=
\{\theta \in \R/\pi\Z : \theta_1 \leq \theta \leq \theta_2\}
.
\]
(We use the definition of ``$\leq$'' from above.) The length of this interval is
\[
\theta_2 - \theta_1
:=
\min\{d \geq 0 : \theta_1 + d \equiv \theta_2 \bmod{\pi}
\}
\in [0,\pi)
.
\]

\item Distance: For $\theta_1, \theta_2 \in \R/\pi\Z$, the distance between $\theta_1$ and $\theta_2$ is defined to be
\[
|\theta_1 - \theta_2| 
:= 
\min\{\theta_1-\theta_2, \theta_2-\theta_1\}.%
\] 
\end{enumerate}
\end{dfn}

In this terminology, the angle monotonicity condition can be rephrased as saying that for any choice of $\alpha_1 < \cdots < \alpha_n$ for which $\theta_{\alpha_i}(x)$ is defined, we have $\theta_{\alpha_1}(x) < \cdots < \theta_{\alpha_n}(x)$.

\subsection{Projections of thin rectangles}

We introduce a geometric condition. To motivate it, observe the following elementary fact: fix a small $\epsilon > 0$, and consider the thin vertical rectangle $[0, \epsilon \ell] \times [0, \ell]$. If we project this onto any line that makes angle at most $\epsilon$ with the horizontal axis, then the image has size comparable to $\epsilon \ell$. Thus, projection in these directions ``compresses'' the rectangle by a factor of $\epsilon$. We need this to hold for our nonlinear maps $\phi_\alpha$, at least for small scales.

\begin{dfn}[Rectangle squishing condition]
\label{dfn:rectangle squishing condition}
We say $\phi : D \to \mathbb{R}$ satisfies the \textbf{rectangle squishing condition} if for every bounded subset $D' \subset D$ and every $\epsilon > 0$, there exists $\delta = \delta(D', \epsilon) > 0$ such that the following holds: Let $L \subset \mathbb{R}^2$ be a line segment of length $|L| \le \delta$. Let $\theta_L$ denote the direction of $L$, and suppose $\alpha \in \R$ is such that there exists $x \in D'_\alpha \cap \nbhd{L}{\delta|L|}$ satisfying $|\theta_\alpha(x) - \theta_L| \le \delta$. Then $\phi_\alpha(\nbhd{L}{\delta|L|} \cap D_\alpha)$ is contained in an interval of length at most $\epsilon |L|$.
\end{dfn}

The next lemma shows that the rectangle squishing condition holds under the assumptions of \Cref{thm:main}.

\begin{lem}
\label{lem:rectangle squishing condition}
Suppose $\phi$ and $\nabla_x \phi$ extend continuously to the closure $\overline{D}$, and that the extension of $\nabla_x \phi$ is nonzero on $\overline{D}$. Suppose also that $D_\alpha$ is convex for all $\alpha \in \R$. Then $\phi$ satisfies the rectangle squishing condition.
\end{lem}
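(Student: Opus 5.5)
Fix a bounded $D'\subset D$ and $\epsilon>0$. The plan is to combine uniform continuity of $\nabla_x\phi$ on a compact set, a lower bound on $|\nabla_x\phi|$, and convexity of $D_\alpha$, which lets us integrate $\nabla\phi_\alpha$ along segments lying inside $D_\alpha$.

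\emph{Compactness.} Let $K=\overline{D'}$ thickened by $1$, intersected with $\overline D$; this is compact. By hypothesis (2) the extension of $\nabla_x\phi$ is continuous and nonzero on $\overline D$. Hence there are constants $0<m\le M<\infty$ with $m\le|\nabla_x\phi|\le M$ on $K$. Moreover, the unit vector field $\nabla_x\phi/|\nabla_x\phi|$, viewed up to sign, is uniformly continuous on $K$. Consequently the direction $\theta_\alpha(y)$ (extended to $\overline D$) has a modulus of continuity $\omega$ on $K$:
\[
|\theta_\alpha(y)-\theta_{\alpha}(y')|\le\omega(|y-y'|),\qquad \omega(r)\to0 \text{ as } r\to0 .
\]
Similarly $\nabla_x\phi$ itself has a modulus of continuity $\omega_1$ on $K$.

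\emph{Estimating $\phi_\alpha$ on $R=\nbhd{L}{\delta|L|}\cap D_\alpha$.} Let $x\in D'_\alpha\cap\nbhd{L}{\delta|L|}$ with $|\theta_\alpha(x)-\theta_L|\le\delta$, and set $v=\nabla\phi_\alpha(x)$. Then $v$ is within angle $\delta$ of the normal $n_L$ to $L$. Take $y,z\in R$. Since $D_\alpha$ is convex, the segment $[y,z]\subset D_\alpha$, and it lies in $B(x,3|L|)\subset K$ once $\delta\le 1/3$. Therefore
\[
\phi_\alpha(z)-\phi_\alpha(y)=\int_0^1\nabla\phi_\alpha(y+t(z-y))\cdot(z-y)\,dt .
\]
We write $\nabla\phi_\alpha(\cdot)=v+e$ with $|e|\le\omega_1(3|L|)\le\omega_1(3\delta)$. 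This gives
\[
|\phi_\alpha(z)-\phi_\alpha(y)|\le|v\cdot(z-y)|+2|L|\,\omega_1(3\delta),
\]
using $|z-y|\le(1+2\delta)|L|\le 2|L|$. Now decompose $z-y=a\,u_L+b\,n_L$, where $u_L$ is the unit direction of $L$, $|a|\le 2|L|$ and $|b|\le 2\delta|L|$. Then
\[
|v\cdot(z-y)|\le M\big(|a|\sin\delta+|b|\big)\le 4M\delta|L| .
\]
So the diameter of $\phi_\alpha(R)$ is at most $\big(4M\delta+2\omega_1(3\delta)\big)|L|$. We choose $\delta$ small enough that this is $\le\epsilon|L|$; then $\phi_\alpha(R)$ lies in an interval of length $\epsilon|L|$. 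Notably, the lower bound $m$ is not needed in this estimate.

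\emph{Main obstacle.} The delicate point is uniformity in $\alpha$: the modulus $\omega_1$ must be uniform over all $(\alpha,y)$ near $K$. This is exactly why we use continuity of $\nabla_x\phi$ on the closure $\overline D$ of the three-dimensional domain, and not merely on each slice $D_\alpha$. Near the boundary, the points $y+t(z-y)$ lie in $D$ (by convexity), within distance $3|L|$ of $(\alpha,x)$ in $\R^3$. Uniform continuity on the compact set $\overline D\cap\big(\overline{\nbhd{D'}{1}}\big)$ handles this, since all relevant points lie within distance $1$ of $D'$. Convexity is essential: without it the segment $[y,z]$ could leave $D_\alpha$, and $\phi_\alpha$ could take very different values on separate components of $R$.
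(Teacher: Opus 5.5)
Your proof is correct and follows essentially the paper's route: the compact set $\overline{\nbhd{D'}{1}}\cap\overline D$, convexity of $D_\alpha$ so that the mean value theorem (or its integral form) applies along $[y,z]$, and a split of $y-z$ into its components along and across $L$. The one difference is that you compare the vector $\nabla\phi_\alpha(w)$ with $\nabla\phi_\alpha(x)$ using uniform continuity of $\nabla_x\phi$, where the paper compares the angles $\theta_\alpha(w)$ and $\theta_\alpha(x)$; as you note, this means you never use that the extension of $\nabla_x\phi$ is nonzero on $\partial D$, nor that $\phi$ itself extends continuously to the boundary.
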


\begin{proof}
Let $D' \subset D$ be bounded and $\epsilon > 0$. Consider the compact set $K = \overline{\nbhd{D'}{1} \cap D}$. Since $\theta_\alpha(x)$ is uniformly continuous on $K$, we can choose $\delta \in (0,\min(1/3,\epsilon))$ such that $|x - y| \le 3\delta$ implies $|\theta_\alpha(x) - \theta_\alpha(y)| \le \epsilon$ on $K$.

Let $L \subset \mathbb{R}^2$ be a line segment with $|L| \le \delta$, and let $x \in D'_\alpha \cap \nbhd{L}{\delta|L|}$ satisfy $|\theta_\alpha(x) - \theta_L| \le \delta$. We need to show
\[
\sup_{y,z \in \nbhd{L}{\delta|L|} \cap D_\alpha}
|\phi_\alpha(y)-\phi_\alpha(z)|
\leq
\e |L|.
\]
For any $y,z \in \nbhd{L}{\delta|L|} \cap D_\alpha$, convexity and the mean value theorem implies there exists $w$ on the line segment joining $y$ and $z$ such that
\[
|\phi_\alpha(y) - \phi_\alpha(z)| = |\nabla \phi_\alpha(w) \cdot (y - z)| = \|\nabla \phi_\alpha(w)\| \|P(y-z)\|,
\]
where $P : \R^2 \to \R^2$ denotes the orthogonal projection onto the span of $\nabla \phi_\alpha(w)$. Since $y,z \in \nbhd{L}{\delta|L|}$, an elementary geometric argument gives
\[
\|P(y-z)\|
\leq
2\delta|L| + |P(L)|
=
(2\delta + \sin|\theta_\alpha(w) - \theta_L|)|L|.
\]
Since $|x - w| \le \operatorname{diam}(\nbhd{L}{\delta|L|}) \leq (1+2\delta)|L| \leq 3\delta < 1$, we have $(\alpha,x),(\alpha,w) \in K$. Thus, uniform continuity gives $|\theta_\alpha(w) - \theta_L| \le |\theta_\alpha(w) - \theta_\alpha(x)| + |\theta_\alpha(x) - \theta_L| \le \epsilon + \delta \leq 2\epsilon$. Thus, $\|P(y-z)\| \leq 4\epsilon$, so
\[
|\phi_\alpha(y) - \phi_\alpha(z)| \leq 4M \epsilon |L|, \qquad\text{where } M := \sup_{(\alpha,x) \in K} \|\nabla_x \phi(\alpha,x)\| < \infty.
\]
This shows that $\phi_\alpha(\nbhd{L}{\delta|L|} \cap D_\alpha)$ lies in an interval of length at most $4M \epsilon |L|$. Since $M$ does not depend on $\epsilon$, we can replace $\epsilon$ with $\epsilon/(4M)$ at the beginning of the proof.
\end{proof}

It will be convenient to have a shorthand for the class of functions we are considering.

\begin{dfn}
Let $D \subset \R^3$ be an open set. We say a function $\phi:D\to \R$ is \textbf{admissible} if $\phi$ is $C^1$ and non-stationary (i.e., $\nabla_x\phi$ is nonvanishing on $D$), satisfies the angle monotonicity condition (\Cref{dfn:angle monotonicity}), and the rectangle squishing condition (\Cref{dfn:rectangle squishing condition}).
\end{dfn}

\begin{rmk}
\label{rmk:rectangle squishing condition}
\Cref{lem:rectangle squishing condition} is the only way that the proof of \Cref{thm:main} uses conditions (2) and (3). Thus, we can replace conditions (1), (2), and (3) of \Cref{thm:main} with ``$\phi$ is admissible'' to get a slightly more general theorem; see \Cref{thm:main v2}.
\end{rmk}

\subsection{Angle separation}

\begin{lem}[Angle separation]\label{thetaseplem}
Let $\phi$ be admissible, let $\Asmall$ and $\Acover$ be disjoint compact sets, and assume $\Acover$ is an interval. Let $K$ be a compact set contained in $\bigcap_{\alpha \in \Acover} D_\alpha$. Then there exist $\sigma_0(K, \Asmall, \Acover)$ and $\delta_0(K, \Asmall, \Acover)> 0$ such that if $x_1, x_2$ are points in $K$, $\alpsmall \in \Asmall$, and $\alpcover \in \Acover$, and $x_1 \in D_{\alpsmall}$, and $|x_1 - x_2| < \delta_0$, then $|\theta_{\alpcover}(x_2) - \theta_{\alpsmall}(x_1)| > \sigma_0$.
\end{lem}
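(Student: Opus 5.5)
The plan is to argue by contradiction with a compactness argument. The one subtle point is that the limit of the ``small'' parameters may leave $D$. To handle this, I will apply angle monotonicity at the points $x_1$ themselves and compare with a slightly enlarged cover interval, rather than passing to a limit in $\theta_{\alpsmall}$.

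\emph{Setup.} Write $\Acover=[c,d]$. Since $\Asmall$ and $\Acover$ are disjoint compact sets, $g:=\dist(\Asmall,\Acover)>0$. The set $[c,d]\times K$ is compact and contained in the open set $D$. Hence there are $\eta\in(0,g)$ and $r>0$ such that $[c-\eta,d+\eta]\times \overline{\nbhd{K}{r}}\subset D$. Every $\alpsmall\in\Asmall$ then satisfies $\alpsmall<c-\eta$ or $\alpsmall>d+\eta$.

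\emph{Key observation.} Fix $y\in K$ and $\alpsmall\in\Asmall$ with $y\in D_{\alpsmall}$. The set $\{\alpha:(\alpha,y)\in D\}$ contains both $\alpsmall$ and $[c-\eta,d+\eta]$. By angle monotonicity, $\alpha\mapsto\theta_\alpha(y)$ is strictly increasing into some $[a,a+\pi)$. Consequently $\theta_{\alpsmall}(y)$ does not lie in the arc
\[
I(y):=[\theta_{c-\eta}(y),\theta_{d+\eta}(y)]
\]
(in the cyclic sense of the preliminary definition). Moreover, for every $\alpcover\in[c,d]$ the angle $\theta_{\alpcover}(y)$ lies in the arc $[\theta_{c-\eta/2}(y),\theta_{d+\eta/2}(y)]$, which sits strictly inside $I(y)$.

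\emph{Contradiction argument.} Suppose the lemma fails. Then for each $n$ there are $x_1^n,x_2^n\in K$, $\alpsmall^n\in\Asmall$ and $\alpcover^n\in\Acover$ with $x_1^n\in D_{\alpsmall^n}$, $|x_1^n-x_2^n|<1/n$, and $|\theta_{\alpcover^n}(x_2^n)-\theta_{\alpsmall^n}(x_1^n)|\le 1/n$. Pass to a subsequence so that $x_1^n,x_2^n\to x\in K$ and $\alpcover^n\to a_2\in[c,d]$. The function $(\alpha,y)\mapsto\theta_\alpha(y)$ is continuous on $D$, since $\phi$ is $C^1$ and non-stationary, and $(a_2,x)\in D$. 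Hence
\[
\theta_{\alpsmall^n}(x_1^n)\to\theta_{a_2}(x).
\]
By the same continuity, the endpoints of $I(x_1^n)$ converge to $\theta_{c-\eta}(x)$ and $\theta_{d+\eta}(x)$. By strict monotonicity at $x$, the angle $\theta_{a_2}(x)$ lies in the arc $[\theta_{c-\eta/2}(x),\theta_{d+\eta/2}(x)]$. This arc lies in the interior of $I(x)$ at positive distance from the endpoints of $I(x)$. Therefore $\theta_{\alpsmall^n}(x_1^n)\in I(x_1^n)$ for large $n$, contradicting the key observation. This yields $\sigma_0$ and $\delta_0$.

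\emph{Main obstacle.} The delicate step is the arc bookkeeping modulo $\pi$. One must check that the arcs $I(y)$ have length $<\pi$ and vary continuously. One must also check that the cyclic statement ``$\theta_{\alpsmall}(y)\notin I(y)$'' follows from the linear ordering in $[a,a+\pi)$ supplied by Definition~\ref{dfn:angle monotonicity}, whose representative $a$ depends on $y$. Both facts come from the fact that strictly increasing angles within a half-open window of length $\pi$ give exactly the cyclic order defined earlier. The use of $\eta$ is essential: without enlarging $[c,d]$, a limit $a_2$ at an endpoint of $\Acover$ could be approached from outside the arc.
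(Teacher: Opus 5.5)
Your proof is correct and uses the same idea as the paper's. You enlarge $\Acover$ to an interval $[\aminus,\aplus]$ (your $[c-\eta,d+\eta]$) with $[\aminus,\aplus]\times K\subset D$ and $[\aminus,\aplus]\cap\Asmall=\emptyset$, then apply angle monotonicity at the point $x_1$ itself: this places $\theta_{\alpsmall}(x_1)$ outside the arc $[\theta_{\aminus}(x_1),\theta_{\aplus}(x_1)]$, while $\theta_{\alpcover}$ lies strictly inside it. The difference is only in packaging. The paper gets an explicit $2\sigma_0$ margin by compactness, then obtains $\delta_0$ from uniform continuity of $\theta_\alpha$ on $\Acover\times K$ and the reverse triangle inequality; you run the same compactness as a sequential contradiction, and your auxiliary radius $r$ and the $\eta/2$ arc are not actually needed.
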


\begin{proof}
By assumption, $\Acover \times K \subset D$. Since $\Acover \times K$ is compact, $D$ is open, and $\Acover \cap \Asmall = \emp$, there exists an interval $[\aminus, \aplus]$ such that (Figure \ref{fig: As and alphas})
\begin{gather}
\label{eq:aminus aplus Acover}
(\aminus, \aplus) \supset \Acover
\\
[\aminus, \aplus] \times K \subset D
\\
\label{eq:disjoint Asmall Acover nbhd}
[\aminus, \aplus] \cap \Asmall = \emptyset
\end{gather}

\begin{figure}
\centering
\begin{minipage}{.45\textwidth}
  \centering
\begin{tikzpicture}
\draw(0,0)--(6,0);
\draw[ultra thick, blue](2,0)--(4,0);
\draw[ultra thick, red](.5,0)--(1.5,0);
\draw[ultra thick, red](4.5,0)--(5.5,0);

\draw (1.2,-.1)--(1.2,.1);
\draw (1.8,-.1)--(1.8,.1);
\draw (4.1,-.1)--(4.1,.1);
\draw (2.4,-.1)--(2.4,.1);

\node[below left] at (1.2,-.1) {$\alpsmall$};
\node[below] at (1.8,-.1) {$\al^-$};
\node[below right] at (2.4,-.1) {$\alpcover$};
\node[below right] at (4.1,-.1) {$\al^+$};

\node[red] at (1,1) {$\Asmall$};
\node[blue] at (3,1) {$\Acover$};
\node[red] at (5,1) {$\Asmall$};
\end{tikzpicture} 
\caption{The sets $\Acover,\Asmall$.}
  \label{fig: As and alphas}
\end{minipage}%
\begin{minipage}{.45\textwidth}
  \centering
\begin{tikzpicture}
\draw (2.5,0) arc (0:180:2.5);
\draw[ultra thick, blue] (0,0) +(75:2.5) arc (75:103:2.5);
\draw[ultra thick, red] (0,0) +(10:2.5) arc (10:50:2.5);
\draw[ultra thick, red] (0,0) +(120:2.5) arc (120:170:2.5);
\draw[fill](2.5*.9,2.5*.44)circle[radius=0.1];
\draw[fill](2.5*.4,2.5*.92)circle[radius=0.1];
\draw[fill](-2.5*.1,2.5*1)circle[radius=0.1];
\draw[fill](-2.5*.3,2.5*.95)circle[radius=0.1];

\node[below right] at (.2+2.5*.9,2.5*.44) {$\theta_{\alpsmall}(x)$};
\node[right] at (.2+2.5*.4,2.5*.92) {$\theta_{\aminus}(x)$};
\node[above right] at (-2.5*.1,2.5*1) {$\theta_{\alpcover}(x)$};
\node[above left] at (-2.5*.3,2.5*.95) {$\theta_{\aplus}(x)$};
\end{tikzpicture}
\caption{Angles on the half-circle, with sets $\Acover$ (blue) and $\Asmall$ (red) for emphasis.}
  \label{fig: circle with angles}
\end{minipage}
\end{figure}
The angle monotonicity condition implies that 
\begin{align*}
    \theta_{\aminus}(x) < \theta_{\alpha}(x) < \theta_{\aplus}(x) 
    \qquad\text{for all } (\alpha,x) \in \Acover \times K.
\end{align*} 
Since $\theta_\alpha(x)$ is continuous and $\Acover \times K$ is compact, there exists $\sigma_0 = \sigma_0(K, \Asmall, \Acover)$ such that 
\begin{align}
\label{eq:2sigma separation}
    \begin{cases}
    \theta_{\alpha}(x) - \theta_{\aminus}(x) > 2\sigma_0 \\ \theta_{\aplus}(x) - \theta_{\alpha}(x) > 2\sigma_0
    \end{cases}
    \qquad\text{for all } (\alpha,x) \in \Acover \times K.
\end{align}

Now, let $\alpsmall \in \Asmall$ and $\alpcover \in \Acover$, and suppose $x \in D_{\alpsmall} \cap K.$ By \eqref{eq:aminus aplus Acover} and \eqref{eq:disjoint Asmall Acover nbhd}, the angle monotonicity condition implies
\begin{align}
    \theta_{\alpsmall}(x) < \theta_{\aminus}(x) <  \theta_{\alpcover}(x) < \theta_{\aplus}(x),
\end{align}
as in Figure \ref{fig: circle with angles}.  In other words, the angles $\theta_{\alpsmall}(x)$ and $\theta_{\alpcover}(x)$ lie in the complementary arcs $[\theta_{\aplus}(x),\theta_{\aminus}(x)]$ and $[\theta_{\aminus}(x),\theta_{\aplus}(x)]$, respectively. Furthermore, by \eqref{eq:2sigma separation}, $\theta_{\alpcover}(x) \in [\theta_{\aminus}(x)+2\sigma_0,\theta_{\aplus}(x)-2\sigma_0]$. This implies 
\begin{align}
    |\theta_{\alpcover}(x) - \theta_{\alpsmall}(x)| > 2 \sigma_0 \qquad \text{whenever $\alpcover \in \Acover$, $\alpsmall \in \Asmall$, $x \in D_{\alpsmall} \cap K$.}
\end{align}

Next, because $\theta_{\alpha}(x)$ is uniformly continuous on $\Acover \times K$, there exists $\delta_0 := \delta_0(K, \Asmall, \Acover)$ such that 
\begin{align}
    |\theta_{\alpha}(y) - \theta_{\alpha}(z)| < \sigma_0
    \qquad\text{whenever $\alpha \in \Acover$ and $y, z \in K$ satisfy $|y - z| < \delta_0$}.
\end{align}  

The lemma follows from the two inequalities above and the reverse triangle inequality.
\end{proof}

\subsection{Intersections between line segments and fibers of $\phi_\alpha(x)$}

We conclude the section with a key geometric lemma (\Cref{geomlem phi}), which we will use to ensure that the fibers of our projections intersect certain line segments.

\begin{figure}
\centering
\includegraphics[page=1,width=0.4\textwidth]{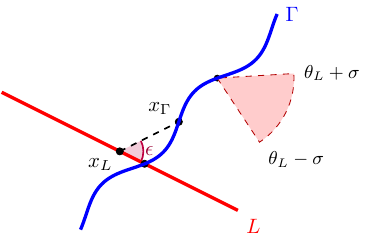}
\caption{Illustration of \Cref{geomlem}}
\label{fig: geo lemma 1}
\end{figure}

\begin{lem}[Intersections between segments and curves]\label{geomlem}
Let $L \subset \R^2$ be a line segment with center $x_L$, direction $\theta_L$, and length $2\ell_L$. Let $\Gamma \subset \R^2$ be a $C^1$ curve containing a point $x_\Gamma$ such that:
\begin{enumerate}
    \item The two connected components of $\Gamma \setminus \{x_\Gamma\}$ each have arc length at least $\ell_\Gamma$.
    \item The tangent direction at every point of $\Gamma$ lies in the complement of $(\theta_L - \sigma, \theta_L + \sigma)$ for some $\sigma \in (0, \frac{\pi}{2})$.
\end{enumerate}
Let $\epsilon \in (0, \frac{\pi}{2}]$ be the angle between the vector $x_\Gamma - x_L$ and the line segment $L$. If 
\begin{align*}
\ell_L 
\geq 
\frac{\sin(\sigma+\epsilon)}{\sin \sigma}
\|x_L-x_\Gamma\| 
\qquad\text{and}\qquad
\ell_\Gamma
\geq 
\frac{\sin\epsilon}{\sin \sigma}
\|x_L-x_\Gamma\| 
,
\end{align*}
then $\Gamma$ and $L$ must intersect at some point (Figure \ref{fig: geo lemma 1}).
\end{lem}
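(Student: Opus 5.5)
Set up coordinates with $x_L$ at the origin and $L$ along the horizontal axis, so $L=[-\ell_L,\ell_L]\times\{0\}$ and $\theta_L=0$. Write $h=\|x_L-x_\Gamma\|$. Then $x_\Gamma=(h\cos\epsilon,\, h\sin\epsilon)$, possibly after reflecting, so that $x_\Gamma$ lies in the closed upper half-plane at height $h\sin\epsilon\ge 0$. If $\epsilon$ were $0$ the point $x_\Gamma$ would already lie on $L$, since $\ell_L\ge h$; as $\epsilon>0$, we may assume $x_\Gamma$ is strictly above $L$.

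The key observation is that hypothesis (2), together with continuity of the tangent along the connected set $\Gamma$, forces every tangent direction of $\Gamma$ to stay in a single closed arc $[\sigma,\pi-\sigma]$ (mod $\pi$). Consequently $\Gamma$ is a graph over the vertical axis: parametrize $\Gamma$ by arc length $s\mapsto\gamma(s)$ with $\gamma(0)=x_\Gamma$. The unit tangent $\gamma'(s)=(\cos\psi(s),\sin\psi(s))$ then has $\psi(s)\in[\sigma,\pi-\sigma]$ after orienting so that the vertical component is positive. Hence $|\gamma_2'(s)|\ge\sin\sigma$, and $|\gamma_1'|/|\gamma_2'|\le\cot\sigma$.

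Follow the component of $\Gamma\setminus\{x_\Gamma\}$ in the downward direction. Since $\gamma_2$ decreases at rate at least $\sin\sigma$, the curve reaches height $0$ after arc length at most $h\sin\epsilon/\sin\sigma\le\ell_\Gamma$, and this arc is available by hypothesis (1). So there is a first point $p=(p_1,0)$ of $\Gamma$ on the horizontal axis. Along the way the horizontal displacement is at most $\cot\sigma$ times the vertical drop, giving
\[
|p_1-h\cos\epsilon|\le h\sin\epsilon\cot\sigma .
\]
It remains to show $|p_1|\le\ell_L$. By the bound above,
\[
|p_1|\le h\cos\epsilon+h\sin\epsilon\cot\sigma=\frac{h(\sin\sigma\cos\epsilon+\cos\sigma\sin\epsilon)}{\sin\sigma}=\frac{\sin(\sigma+\epsilon)}{\sin\sigma}\,h\le\ell_L .
\]
This is exactly the first hypothesis, so $p\in L\cap\Gamma$.

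The main subtlety I expect is the "single arc" step. Hypothesis (2) only excludes an open arc around $\theta_L$, and directions are taken mod $\pi$. So I must check that the complement $[\theta_L+\sigma,\theta_L+\pi-\sigma]$ is one arc in $\R/\pi\Z$, which holds because $\sigma<\pi/2$. I must also lift the tangent direction continuously to an oriented angle $\psi(s)$ whose sine has constant sign. This uses that $\Gamma$ is connected and $C^1$, so the unoriented tangent direction is a continuous map into a connected arc avoiding $0$; the oriented lift then never becomes horizontal, and its vertical component keeps one sign. A further minor point is choosing the orientation so that the chosen half of $\Gamma$ actually descends toward $L$. Either half works after possibly reversing the parametrization, since one half goes up and the other goes down.
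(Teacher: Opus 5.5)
Your proof is correct and is essentially the paper's argument rotated by $90^\circ$: the paper puts $L$ on the vertical axis and writes $\Gamma$ as a graph $f$ over a horizontal interval $[p,q]$ with $|f'|\le\cot\sigma$. It uses the arc-length bound $\ell_\Gamma\le (q-a)\csc\sigma$ to show the graph reaches the axis, and then bounds $|f(0)|\le |b|+|a|\cot\sigma=\frac{\sin(\sigma+\epsilon)}{\sin\sigma}\|x_L-x_\Gamma\|$, which is exactly what your descent-rate and horizontal-displacement estimates do. Your continuity argument for fixing the orientation of the tangent is a detail the paper leaves implicit, as is the assumption that $\Gamma$ contains the endpoint of an arc of length exactly $\ell_\Gamma$; this holds in the paper's application, where $\Gamma$ is compact, and is built into its graph over the closed interval $[p,q]$.
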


\begin{proof}
The statement is invariant under translations, rotations, and reflections. Thus, we may without loss of generality assume:
\begin{enumerate}
    \item $x_L = (0,0)$ and $L$ is a subset of the vertical axis $\{0\} \times \R$.
    \item $x_\Gamma = \|x_L - x_\Gamma\|(-\sin \epsilon, \cos\epsilon) =: (a,b)$.
    \item $\Gamma$ is the graph of a $C^1$ function $f : [p,q] \to \R$ with $|f'| \leq \cot\sigma$
\end{enumerate}

First, $\Gamma$ intersects the vertical axis $\{0\} \times \R$. To see this, note that the assumptions on $\Gamma$ give
\[
\ell_\Gamma
\leq
\int_{a}^q \, \sqrt{1+f'(t)^2} \, dt
\leq
\int_{a}^q \, \sqrt{1+\cot^2\sigma} \, dt
=
q \csc \sigma
+
\|x_L - x_\Gamma\| \sin \epsilon \csc \sigma
\leq
q \csc\sigma+ \ell_\Gamma.
\]
Thus, $q \geq 0$, so $\Gamma$ intersects the vertical axis $\{0\} \times \R$.

To complete the proof, we need to show that $|f(0)| \leq \ell_L$. By the mean value theorem,
$|f(0) - b|
=
|f(0) - f(a)|
\leq
|a| \cot\sigma$. Then by the triangle inequality and the trigonometric identity for $\sin(\sigma+\epsilon)$, we have
\[
|f(0)|
\leq
|b| + |a| \cot \sigma
=
\|x_L-x_\Gamma\|\sin(\sigma+\epsilon) \csc\sigma
\leq
\ell_{L}
\]
as desired.
\end{proof}

\begin{lem}[Intersections between segments and fibers of $\phi_\alpha$]
\label{geomlem phi}
Let $\phi : D \to \R$ be admissible. Let $L$ be a line segment with center $x_L$, direction $\theta_L$, and length $2\ell_L$. Fix $\al_0$ and a ball $\overline B(x_0,r) \subset D_{\al_0}$. Suppose that $|\theta_{\al_0}(x) - \theta_L| \geq \sigma$ for all $x \in \overline B(x_0, r)$. Let $\epsilon \in (0, \frac{\pi}{2}]$ be the angle between the vector $x_0 - x_L$ and the line segment $L$. If
\begin{align}
\label{eq:geomlem length condition}
\ell_L 
\geq 
\frac{\sin(\sigma+\epsilon)}{\sin \sigma}
\|x_L-x_0\| 
\qquad\text{and}\qquad
r
\geq 
\frac{\sin\epsilon}{\sin \sigma}
\|x_L-x_0\| 
\end{align}
then $\phi_{\al_0}(x_0) \in \phi_{\al_0}(L)$ (or equivalently, the fiber of $\phi_{\al_0}$ through $x_0$ intersects $L$).
\end{lem}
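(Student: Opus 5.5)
The plan is to reduce to \Cref{geomlem}, taking $\Gamma$ to be a suitable piece of the level curve of $\phi_{\al_0}$ through $x_0$, with $x_\Gamma = x_0$ and $\ell_\Gamma = r$. Since $\nabla\phi_{\al_0}$ is nonvanishing and continuous on $D_{\al_0} \supset \overline B(x_0,r)$, the implicit function theorem shows that near $x_0$ the fiber $\phi_{\al_0}^{-1}(\phi_{\al_0}(x_0))$ is a $C^1$ curve. Its tangent direction at each point $x$ is $\theta_{\al_0}(x)$. Parametrize this curve by arc length, $\gamma(s)$ with $\gamma(0)=x_0$, as the integral curve of the unit vector field perpendicular to $\nabla\phi_{\al_0}$. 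This field is continuous, and there is a consistent orientation because $\nabla\phi_{\al_0}$ never vanishes, so we can take the rotation by $\pi/2$ of $\nabla\phi_{\al_0}/|\nabla\phi_{\al_0}|$. Follow $\gamma$ in both directions, $s>0$ and $s<0$.

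The key step is to show that each branch has arc length at least $r$ before it leaves the open ball $B(x_0,r)$, or at least reaches arc length $r$ while staying in $\overline B(x_0,r)$. An integral curve of a continuous nonvanishing field on the compact set $\overline B(x_0,r)$ can be continued until it exits that set. Since $|\gamma'|=1$, we have $|\gamma(s)-x_0|\le |s|$, so for $|s|<r$ the curve stays in $B(x_0,r)$. Hence $\gamma$ is defined on $(-r,r)$ and extends to $[-r,r]$ inside $\overline B(x_0,r)$. The two components of $\Gamma\setminus\{x_0\}$, with $\Gamma=\gamma([-r,r])$, then each have arc length $r$.

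I must also make sure $\Gamma$ is an honest curve and not a closed loop returning on itself. If it is a loop, I will pass to the universal cover. Alternatively, I can check directly that the proof of \Cref{geomlem} only uses the graph representation, and that representation holds locally here. Indeed, every tangent of $\Gamma$ lies in $\overline B(x_0,r)$ and so avoids $(\theta_L-\sigma,\theta_L+\sigma)$. Rotating so that $L$ is vertical, the horizontal component of $\gamma'$ is then bounded below by $\sin\sigma$ and keeps a fixed sign by continuity. So $\gamma$ is injective and is a graph over the horizontal axis with $|f'|\le\cot\sigma$, which is exactly the normalization used in \Cref{geomlem}.

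Hypothesis (2) of \Cref{geomlem} follows from the assumption $|\theta_{\al_0}(x)-\theta_L|\ge\sigma$ on the ball. The length conditions in \eqref{eq:geomlem length condition} are exactly those of \Cref{geomlem} with $\ell_\Gamma=r$. Therefore $\Gamma\cap L\ne\emptyset$. Any point $y$ in the intersection satisfies $\phi_{\al_0}(y)=\phi_{\al_0}(x_0)$, which gives $\phi_{\al_0}(x_0)\in\phi_{\al_0}(L)$. Here $y\in D_{\al_0}$ because $\Gamma\subset\overline B(x_0,r)\subset D_{\al_0}$.

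I expect the main obstacle to be the rigorous construction of the two arcs of length $r$: existence of the integral curve for a merely continuous field, and ruling out premature termination. I would handle this with the implicit-function/graph description. Locally the fiber is a graph over the axis perpendicular to $\theta_L$ with slope bounded by $\cot\sigma$. Extend this graph maximally; it can only stop at the boundary of the ball, and by then the arc length is at least $r$. This argument avoids ODE uniqueness issues entirely.
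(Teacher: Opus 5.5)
Your proof is correct. It reduces to \Cref{geomlem} with $x_\Gamma=x_0$ and $\ell_\Gamma=r$, as the paper does, but it justifies the key step by a different, local argument.

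\textbf{The paper's route.} The paper argues globally. It takes the whole connected component $\Gamma'$ of the fiber through $x_0$ and uses the classification of $1$-manifolds to get $\Gamma'\cong\R$ or $S^1$. It rules out $S^1$ via an extremum of $\phi_{\al_0}$ in the region enclosed by $\Gamma'$. It then concludes that both halves of $\Gamma'\setminus\{x_0\}$ exit $\overline B(x_0,r)$, and passes to the component of $\Gamma'\cap\overline B(x_0,r)$ through $x_0$. The graph normalization used in the proof of \Cref{geomlem} is left implicit.

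\textbf{Your route.} You stay inside the ball. You continue the unit-speed level curve and get arc length $r$ on each side from $|\gamma(s)-x_0|\le|s|$. You then use the angle hypothesis on the ball to show that the component of $\gamma'$ perpendicular to $L$ has modulus at least $\sin\sigma$, so it has a fixed sign. This gives injectivity and the graph representation with $|f'|\le\cot\sigma$ at once.

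\textbf{Comparison.} Your route needs no global topology of the fiber. In particular, you never use that the region enclosed by a closed fiber lies in $D_{\al_0}$. The paper's extremum argument tacitly needs this; it holds for convex $D_{\al_0}$, as in \Cref{thm:main}, but is not part of admissibility. You also justify the graph normalization that \Cref{geomlem} assumes. The paper's argument is shorter and more conceptual.

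\textbf{Two small remarks.} Your fallback to the universal cover is unnecessary, since the monotonicity argument already excludes loops. ODE uniqueness is also a non-issue, because any integral curve of $(\nabla\phi_{\al_0})^\perp/|\nabla\phi_{\al_0}|$ stays on the fiber.
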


\begin{proof}
Let $\Gamma'$ denote the connected component of the level set of $\phi_{\alpha_0}$ containing $x_0$. By nonvanishing of $\nabla \phi_{\alpha_0}$ and the implicit function theorem, $\Gamma'$ is a closed $1$-dimensional submanifold of $D_{\alpha_0}$, so it is either diffeomorphic to $\R$ or to $S^1$. If $\Gamma'$ is diffeomorphic to $S^1$, then  $\phi_{\alpha_0}$ has an absolute maximum or minimum enclosed inside $\Gamma'$, which contradicts the nonvanishing of $\nabla \phi_{\alpha_0}$. Since $\Gamma'$ is diffeomorphic to $\R$, both connected components of $\Gamma'\setminus\{x_0\}$ must leave the compact subset $\overline B(x_0, r) \subset D_{\alpha_0}$.

Let $\Gamma$ be the connected component of $\Gamma' \cap \overline B(x_0, r)$ containing $x_0$. Then we can apply \Cref{geomlem} to conclude that $L$ and $\Gamma$ intersect at some point, say $x_1$. We have $\phi_{\alpha_0}(x_0) = \phi_{\alpha_0}(x_1) \in \phi_{\alpha_0}(L)$, which completes the proof.
\end{proof}

\begin{rmk}
In \Cref{sec: Venetian Blinds}, we will apply \Cref{geomlem phi} twice. In the first application, we assume
\begin{equation}\label{eq:geomlem 1}
\min\{\ell_L, r\} \geq \frac{2}{\sigma}\|x_L - x_0\|.
\end{equation}
In the second application, we assume
\begin{equation}\label{eq:geomlem 2}
0 \le \epsilon < \sigma < \frac{\pi}{2}, \qquad \ell_L \geq \left(1+\frac{\epsilon}{\sigma}\right) \|x_L-x_0\|, \qquad r \geq \frac{2\epsilon}{\sigma} \|x_L-x_0\|.
\end{equation}
Each of \eqref{eq:geomlem 1} and \eqref{eq:geomlem 2}, considered separately, implies \eqref{eq:geomlem length condition} by the elementary inequalities $\sin \sigma \geq \frac{\sigma}{2}$ for $\sigma \in (0, \frac{\pi}{2})$, $\sin(x) \le 1$, and $\frac{\sin(\sigma+\epsilon)}{\sin\sigma} = \cos\epsilon + \frac{\sin\epsilon}{\tan\sigma} \le 1 + \frac{\epsilon}{\sigma}$.
\end{rmk}

\section{The Venetian blinds construction}
\label{sec: Venetian Blinds}

The main goal of this section is to prove the following.

\begin{thm}[Iterated Venetian blinds theorem]\label{itervb}
Let $\phi : D \to \R$ be admissible, and let $\Asmall, \Acover$ be disjoint compact subsets of $\R$ such that $\Acover$ is an interval. Let $G$ be a compact curve contained in $\bigcap_{\alpha \in \Acover} D_{\alpha}$. Then for any $\epsilon > 0$, there exists a set $G'$, which is a finite union of disks, such that:
\begin{enumerate}[label=(\alph*)]
    \item $G'\subset \nbhd{G}{\e}$.
    \item $\phi_{\al}(G) \subset \phi_{\al}(G')$ for all $\al \in \Acover$.
    \item $|\phi_{\al}(G')| < \epsilon$ for all $\al \in \Asmall$. 
\end{enumerate} 

\end{thm}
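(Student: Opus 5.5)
We prove \Cref{itervb} by an iterated Venetian blinds construction. A single step replaces a short segment $L$ by a finite family of much shorter segments (``slats''). Each slat is nearly tangent to the fibers of $\phi_\al$ for one $\al\in\Asmall$, so its $\phi_\al$-image is tiny. At the same time, the slats still meet every fiber of $\phi_{\al}$, $\al\in\Acover$, that crosses $L$. Iterating reduces the measure of the $\Asmall$-projections by a fixed factor at each stage, and the final slats are thickened into disks.

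\textbf{Step 1: reductions.} Since $G$ is a compact $C^1$ curve (or is approximated by one), we first replace $G$ by a polygonal curve $G_0\subset\nbhd{G}{\e/2}$. We must have $\phi_\al(G)\subset\phi_\al(G_0)$ for $\al\in\Acover$. We arrange this by choosing $G_0$ so that every point of $G$ is joined by a fiber of $\phi_\al$ to $G_0$. Concretely, for each short piece of $G$ we use a segment $L$, and we apply \Cref{geomlem phi} with $\sigma=\sigma_0$ from \Cref{thetaseplem} and a compact $K$ (a closed neighborhood of $G$ inside $\bigcap_{\al\in\Acover}D_\al$). Hypothesis \eqref{eq:geomlem 1} holds provided the segment is long relative to its distance from the curve.

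We must also handle the directions: $\theta_\al$, $\al\in\Acover$, may be nearly parallel to $G$. To avoid this, we replace each piece of $G$ by a zigzag of segments whose directions are in $\Tsmall$-type directions. These are directions of the form $\theta_{\alpsmall}(x)$, which by \Cref{thetaseplem} lie at distance $>\sigma_0$ from all $\theta_{\al}(x)$, $\al\in\Acover$. So it suffices to prove the theorem when $G$ is a single segment $L$ of length $\le\delta$, contained in $K$, at every point of which $\theta_L$ is $\sigma_0$-separated from $\theta_\al$ for all $\al\in\Acover$.

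\textbf{Step 2: one blind step.} Cover $\Asmall$ by finitely many small intervals $I_1,\dots,I_N$ of length $\eta$. Pick $\al_j\in I_j$ and a point $x$ near $L$, and set $\theta_j=\theta_{\al_j}(x)$. By uniform continuity, $\theta_\al(y)$ is within $\delta$ of $\theta_j$ for all $\al\in I_j$ and $y$ near $L$. Split $L$ into $m$ equal pieces $L_i$ with centers $x_i$. Through each $x_i$ we place a slat $S_i$ of direction $\theta_j$ and length comparable to $|L_i|$ times a large constant depending on $\sigma_0$. By \Cref{thetaseplem}, $\theta_j$ is $\sigma_0$-separated from the $\Acover$ fibers, and the length condition \eqref{eq:geomlem 2} in \Cref{geomlem phi} (with $\epsilon$ small) gives that every $\Acover$ fiber through $L_i$ meets $S_i$. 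Hence $\phi_\al(L)\subset\phi_\al(\bigcup S_i)$ for $\al\in\Acover$.

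For $\al\in I_j$, the rectangle squishing condition bounds $|\phi_\al(S_i)|\le\e'|S_i|$, so $|\phi_\al(\bigcup_i S_i)|\lesssim\e'|L|$. For the other intervals we have no improvement, only the trivial bound $\lesssim |L|$. We therefore apply the step $N$ times in succession, using direction $\theta_1$, then $\theta_2$, and so on, each time applied to all current segments. A step never increases the total $\phi_\al$-image for any $\al$ by more than a constant factor. Meanwhile, each earlier gain must be preserved, because the new slats lie within tiny neighborhoods of the old ones. This is done exactly as in Davies's proof: pieces produced at stage $j$ lie inside $\nbhd{S}{\delta|S|}$ of the stage-$(j-1)$ slats $S$. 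The rectangle squishing condition applies to that neighborhood, so the bound $\e'|S|$ persists for $\al\in I_{j'}$, $j'<j$. Choosing $\e'\ll C^{-N}\e$ gives (c).

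\textbf{Step 3: finish and main obstacle.} Each slat is replaced by a chain of small disks covering it, with radius much less than $\delta|S|$. This keeps the disks inside the squishing neighborhoods and inside $\nbhd{G}{\e}$, which gives (a) and (b). The main obstacle is the bookkeeping in Step 2. The slat for later directions must stay inside the thin $\delta|S|$-neighborhood of the earlier slats, which forces the new slats to be nearly parallel to the old ones. That conflicts with choosing the new direction $\theta_j$.

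The remedy I would try first is the following. The rectangle squishing condition controls the whole neighborhood $\nbhd{S}{\delta|S|}$ only when the segment length is comparable to $|S|$. So at stage $j$ I would subdivide each old slat into pieces much shorter than $\delta|S|$. The new blinds of each piece then stay within distance $\lesssim\delta|S|$ of $S$, because their length is controlled by $\sigma_0^{-1}$ times the piece length. Making this nesting of scales consistent is where the care is needed.
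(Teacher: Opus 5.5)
There is a genuine gap in Step 2, at ``Choosing $\e'\ll C^{-N}\e$ gives (c).'' Your parameters are forced in the order $\e'\mapsto\de=\de(\e')$ (rectangle squishing) $\mapsto\eta$ (so that $\theta_\al$ moves by at most $\de$ across each $I_j$) $\mapsto N$. So $N=N(\e')$ grows as $\e'\to0$, and $\e'$ cannot be chosen after $N$.

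What you actually need is $\e'C^{N(\e')}\to 0$, and this is false in general. Already for the linear maps of \Cref{davies} with $\Asmall$ of positive length, $\de(\e')\lesssim\e'$ and $N\gtrsim 1/\e'$, while $C\approx 1/\sigma_0$ is fixed, so $\e'C^{N(\e')}\to\infty$. The source of the problem is that each of your steps lengthens the slats by a fixed large factor, because you rotate between unrelated directions $\theta_1,\theta_2,\dots$. The final bound for $\al\in I_{j'}$ is $\e'$ times the total length at stage $j'$, which is of size $C^{j'}$.

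The missing idea is that the length cost of a rotation is proportional to the angle rotated. By \Cref{geomlem phi} in the form \eqref{eq:geomlem 2}, replacing segments of direction $\theta$ by blinds of direction $\theta+\de$ costs only a factor $1+\de/\sigma$. The paper therefore does not index steps by $\al$-intervals at all. It sweeps the direction monotonically through the arc $\Tsmall=[\thetamin,\thetamax]$ (the complement of the $\sigma$-neighborhood of $\Tcover$) in increments of $\de$. After $\thetatot/\de$ steps the total length is at most $\frac{2|G|}{\sigma}(1+\de/\sigma)^{\thetatot/\de}\le\frac{2|G|}{\sigma}e^{\thetatot/\sigma}$, independently of $\de$ and hence of $\e$.

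For a fixed $\al\in\Asmall$, every final segment then has an ancestor whose direction is within $\de$ of $\theta_\al(y)$, since $\theta_\al(y)\in\Tsmall$ and the directions form a $\de$-net of $\Tsmall$. Squishing applies to that ancestor's $\de|L|$-neighborhood, which contains all its descendants. Summing over minimal such ancestors gives $|\phi_\al(G')|\le\e|G'|\le C\e$ with $C$ independent of $\e$. Sorting your directions by angle and using the sharp factor $1+\Delta_j/\sigma_0$ for the actual angle change $\Delta_j$ would also keep the product bounded, since $\sum_j\Delta_j<\pi$.

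Using a net of angles rather than intervals of $\al$ also avoids a second weak point. Uniform continuity of $(\al,y)\mapsto\theta_\al(y)$ on the non-compact set $(\Asmall\times U)\cap D$, with $U$ a neighborhood of $L$, is not available for a merely admissible $\phi$.

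Finally, the ``main obstacle'' you flag in Step 3 is harmless. Choosing each generation much shorter than $\de$ times the previous one (the paper's $\ell_i<\frac{\de}{2}\ell_{i-1}$) keeps all descendants of $L$ inside $\nbhd{L}{\de|L|}$, regardless of their direction.
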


\subsection{Reduction to local version}

We prove Theorem \ref{itervb} by way of the following localized version.

\begin{thm}[Iterated Venetian blinds theorem, local version]\label{itervbloc}
Let $\phi : D \to \R$ be admissible, and let $\Asmall, \Acover$ be disjoint compact subsets of $\R$ such that $\Acover$ is an interval. Furthermore, let $K \subset \R^2$ be a closed ball such that:
\begin{itemize}
    \item $2K \subset \bigcap_{\alpha \in \Acover} D_{\alpha}$.
    \item There exists $\sigma > 0$ such that
    \begin{align}
    \label{eq:theta set 2K}
    |\theta_{\alpsmall}(x_1) - \theta_{\alpcover}(x_2)| > \sigma
    \qquad\text{for all $\alpsmall \in \Asmall$, $\alpcover \in \Acover$, $x_1 \in 2 K \cap D_{\alpsmall}$, $x_2 \in 2 K$}.
    \end{align}
\end{itemize}
Let $G$ be a compact curve contained in $K$. Then for any $\epsilon > 0$, there exists a set $G'$, which is a finite union of line segments, such that:
\begin{enumerate}[label=(\alph*)]
    \item $G' \subset \nbhd{G}{\epsilon}$
    \item $\phi_{\al}(G) \subset \phi_{\al}(G')$ for all $\al \in \Acover$,
    \item $|\phi_{\al}(G')| < \epsilon$ for all $\al \in \Asmall$. 
\end{enumerate} 
\end{thm}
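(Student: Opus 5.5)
We will prove \Cref{itervbloc} by an iterated Venetian blinds construction, in the spirit of Davies. The basic move replaces a single short segment $L$ by a finite union of short parallel segments (``blinds''). Each blind has direction nearly equal to $\theta_{\alpsmall}(x)$ for some $\alpsmall$ in a small piece of $\Asmall$. The blinds sit close to $L$, and together they still ``see'' every fiber of $\phi_\alcover$ that $L$ meets, for all $\alcover\in\Acover$.

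\emph{Reduction to segments.} Since $G$ is a compact curve, we first approximate it. By uniform continuity of $\phi$ on $\Acover\times K$, we can replace $G$ by a polygonal curve of tiny segments lying in $\nbhd{G}{\epsilon/2}$. Covering is preserved if we keep the original fibers crossed: each fiber of $\phi_\alcover$ through a point of $G$ must cross the polygon. We arrange this by also applying \Cref{geomlem phi}, with the separation \eqref{eq:theta set 2K} playing the role of $\sigma$. More simply, we can start from a polygon that is $\delta$-close to $G$ and then thicken. So it suffices to treat a single segment $L\subset 2K$ of small length and then take unions over the segments.

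\emph{One step of blinds.} Cover $\Asmall$ by finitely many short intervals $I_1,\dots,I_N$. For a segment $L$ and one index $j$, pick $\alpha_j\in I_j$ and a point $x\in L$. Partition $L$ into $M$ subsegments, and replace each by a segment $L'$ of direction $\theta_{\alpha_j}(x)$, centered on $L$, long enough that \Cref{geomlem phi} applies. Condition \eqref{eq:geomlem 2} should suffice here: by \eqref{eq:theta set 2K}, every fiber of $\phi_\alcover$ makes angle $>\sigma$ with the new direction, so each fiber through a point of the subsegment hits $L'$. The lengths of the new segments are a constant factor (depending on $\sigma$) times the subsegment lengths, and they stay within a small neighborhood of $L$. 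For $\alpha\in I_j$, the rectangle squishing condition then gives $|\phi_\alpha(L')|\le \eta|L'|$, so the total measure under $\phi_\alpha$ is $\lesssim_\sigma \eta|L|$. The width of $I_j$ and the continuity of $\theta$ keep the angle within the required $\delta$.

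\emph{Iteration.} Apply the step successively for $j=1,\dots,N$, each time to every segment produced so far, so that the new directions are $\theta_{\alpha_j}$. The key issue is that blinds built for $I_j$ must not destroy smallness for the earlier $I_i$, $i<j$. Davies's trick handles this: $\phi_\alpha$ of the new configuration is contained in $\phi_\alpha$ of a small neighborhood of the previous one, provided the new segments are short relative to the previous scale. So $|\phi_\alpha|$ for $\alpha\in I_i$ grows by at most an arbitrarily small additive amount. Coverage for $\Acover$ is preserved at each stage by \Cref{geomlem phi}.

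The main obstacle is the bookkeeping of lengths. Each blind step multiplies total length by a factor $C(\sigma)$, so after $N$ steps the length is $C(\sigma)^N|G|$. The smallness parameter $\eta$ at each stage must therefore be chosen after $N$ is fixed, so that $N\,C(\sigma)^N\eta|G|<\epsilon$. We also need to keep all segments within $2K$ and inside the $\epsilon$-neighborhood, which follows by taking the scales small.
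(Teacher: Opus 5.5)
There is a genuine gap in the length bookkeeping, and the whole construction hinges on this point. In your scheme each window $I_j$ must be narrow enough that, for every $\alpha\in I_j$, the blind direction lies within the tolerance $\delta(\eta)$ that the rectangle squishing condition supplies for compression factor $\eta$. So $N$ is a function of $\eta$, and $N\to\infty$ as $\eta\to0$.

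Already for orthogonal projections one has $|\phi_\alpha(L')|=|L'|\,|\sin(\theta_\alpha-\theta_{L'})|$. Compression by $\eta$ therefore forces angular windows of width $O(\eta)$, hence $N\gtrsim\thetatot/\eta$. With a fixed per-step cost $C(\sigma)>1$, the left side of your requirement $N\,C(\sigma)^N\eta|G|<\epsilon$ is then $\gtrsim\thetatot|G|\,C(\sigma)^{\thetatot/(2\eta)}$, which tends to infinity as $\eta\to0$. So ``choose $\eta$ after $N$ is fixed'' is circular. Letting $\eta$ depend on $j$ does not rescue it: the windows would have to shrink like $C(\sigma)^{-j}$, and their total width could no longer cover the arc of directions $\{\theta_\alpha(x):\alpha\in\Asmall\}$.

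The missing idea is that the cost of a blind step is not a fixed constant. It depends on the angle between the old and new directions: by \Cref{geomlem phi} in the form \eqref{eq:geomlem 2}, rotating by an angle $\delta<\sigma$ multiplies total length by only $1+\delta/\sigma$. So you should traverse the arc $\Tsmall=[\thetamin,\thetamax]$ (the complement of the $\sigma$-neighborhood of $\Tcover$) monotonically, in steps $\theta_i=\thetamin+i\delta$, never crossing $\Tcover$. After $\thetatot/\delta$ steps the total length is at most $\frac{2|G|}{\sigma}\bigl(1+\frac{\delta}{\sigma}\bigr)^{\thetatot/\delta}\le\frac{2|G|}{\sigma}e^{\thetatot/\sigma}$, a bound independent of $\delta$. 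Only then do you pick $\delta$ from the rectangle squishing condition for the target $\epsilon$, which breaks the circularity.

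Smallness for each fixed $\alpha\in\Asmall$ then comes from a tree argument. Every final segment has an ancestor $L$ whose direction is within $\delta$ of $\theta_\alpha$ somewhere in $\nbhd{L}{\delta|L|}$, because the $\theta_i$ form a $\delta$-net of $\Tsmall$. All descendants of $L$ stay in that neighborhood. Summing over minimal such ancestors gives $|\phi_\alpha(G')|\le\epsilon|G'|$.

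Using this fixed, position-independent net of angles also removes two secondary problems in your plan. First, $\theta_{\alpha_j}(x)$ is only defined when $x\in D_{\alpha_j}$, which need not hold for $\alpha_j\in\Asmall$. Second, with arbitrary jumps between windows, \eqref{eq:geomlem 2} is unavailable because it needs the jump to be less than $\sigma$. You would then have to fall back on \eqref{eq:geomlem 1}, whose factor $2/\sigma$ is exactly the fixed cost that causes the blow-up.
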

\begin{rmk}
We note that \Cref{itervb} was stated with the output set $G'$ a union of disks, whereas in \Cref{itervbloc} the set $G'$ is a union of line segments.  This is merely a cosmetic difference; it is easy to see that a finite union of segments can be converted into a finite union of disks with the same projection properties.  The reason for the discrepancy is that line segments are what our construction yields, but when we apply \Cref{itervb} in \Cref{Topological base section} to prove Theorem \ref{thm:main}, we use the disk version.
\end{rmk}

\begin{proof}[Proof of \Cref{itervb} assuming \Cref{itervbloc}]
Note that $\bigcap_{\alpha \in \Acover} D_{\alpha}$ is open (since $D$ is open and $\Acover$ is compact) and $G$ is compact. Applying \Cref{thetaseplem}, we can cover $G$ by a finite number of closed balls $\{K_j\}_{j=1}^N$, such that each $K_j$ satisfies the hypotheses of \Cref{itervbloc}. Let $G_j := G \cap K_j$. 

Let $\epsilon > 0$. Apply \Cref{itervbloc} to each piece $G_j$ with $\epsilon/N$ in place of $\epsilon$.  Let $G' = \bigcup_{j=1}^N G_j'.$ Because each set $G_j'$ is a finite union of line segments, it follows that $G'$ is also a finite union of line segments. Since $G_j' \subset \nbhd{G_j}{\epsilon}$ for all $j$, we have $G' \subset \nbhd{G}{\epsilon}$. For each $\al \in \Acover$ and each $j$, we have that 
\[
\phi_{\al}(G) 
=
\bigcup_{j=1}^N
\phi_{\al}(G_j)
\subset 
\bigcup_{j=1}^N
\phi_{\al}(G_j')
=
\phi_{\al}(G'). 
\]
Moreover, if $\al \in \Asmall$, we have that $|\phi_{\al}(G_j')| < \epsilon/N$ for each $j$, so $|\phi_{\al}(G')| < \epsilon$. Thus, $G'$ is a finite union of line segments which satisfies properties (a), (b), and (c) of \Cref{itervb}.  Finally, by replacing each segment with a covering collection of disks of sufficiently small radius, we get a union of disks which satisfies all three desired properties.
\end{proof}

\begin{figure}
\centering
\begin{minipage}[b]{0.45\linewidth}
\includegraphics[page=1]{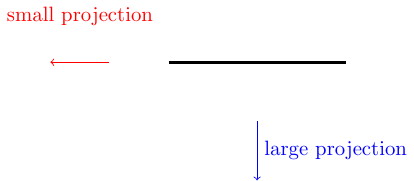}
\caption{Projections of the initial segment}
\label{fig: initial segment}
\end{minipage}
\qquad
\begin{minipage}[b]{0.45\linewidth}
\includegraphics[page=2]{projections.pdf}
\caption{Projections of the initial segment}
\label{fig: blinds}
\end{minipage}
\end{figure}

The remainder of the section is dedicated to proving \Cref{itervbloc}.  This is accomplished using the \textit{Venetian blinds construction}.  Versions of this technique have been used by Falconer \cite{Falconer86} in the linear setting, and by 
authors Chang, McDonald, and Taylor \cite{CMT23} for a particular class of non-linear projection maps.  The basic idea is that if we start with a single line segment, the possible (linear) projections range from a single point to a segment as large as the original one (Figure \ref{fig: initial segment}).  If we replace that segment by a large number of short segments as in Figure \ref{fig: blinds} (called ``blinds''), we obtain a set which still has full projection in the direction orthogonal to the original line, and still has small projection in the direction of the original line, but now has small projection in a new direction.  By iterating this procedure, we can achieve a remarkable degree of control over where projections are large and where they are small.

Our version of the Venetian blinds construction consists of two kinds of steps: an \textit{initial step} in which the curve $G$ is replaced by a union $\mathcal{L}_0$ of line segments pointing in a fixed direction $\theta_0$, and \textit{iterative steps} in which a collection $\mathcal{L}$ of line segments pointing in some direction $\theta$ are replaced by a collection of shorter line segments $\mathcal{L}'$ pointing in direction $\theta + \delta$ (where $\delta$ is a small fixed constant depending on the other parameters). The steps will be done so that, after the initial step, the total length of $\mathcal{L}_0$ will be bounded above by a constant\footnote{Constants may depend on $\sigma$ and $\phi$, but not $\delta$} times the length of $G$, and during each of the iterative steps, the total length $|\mathcal{L}'|$ of $\mathcal{L}'$ is bounded above by $(1 + O(\delta)) |\mathcal{L}|$. Since we will perform $O(1/\delta)$ many iterative steps, we will have $|G'| \leq C_{\sigma}(G)$ since $\lim_{\delta \to 0} C(1 + C \delta)^{1/\delta} < \infty$ for any $C > 0$.

\subsection{The initial and iterative steps}

We will define two sets of angles $\Tcover$ and $\Tsmall$ as follows.  First, the angle set $\Tcover$ is defined by
\begin{align}
\label{eq:Tcover}
\Tcover = \{\theta_\al(x) : x \in 2K, \al \in \Acover\}.
\end{align}
Since $\theta_{\alpha}(x)$ is a continuous function in $(\alpha,x)$ on the connected compact set $\Acover \times 2K$, it follows that $\Tcover$ is a compact interval. We define the angle set $\Tsmall$ to be the complement of the $\sigma$-neighborhood of $\Tcover$ in $\R/\pi\Z$. By construction and \eqref{eq:theta set 2K}, we have
\begin{align}
\label{eq:Tsmall}
\{\theta_\al(x) : x \in 2K \cap D_\alpha, \al \in \Asmall\} &\subset \Tsmall
\\
\label{eq:dist Tcover Tsmall}
\dist(\Tcover, \Tsmall) = \sigma
\end{align}

We write $\Tsmall =[\thetamin, \thetamax]$, and write $\thetatot = \thetamax - \thetamin$, the length of the arc $\Tsmall$.

Working with the parameter sets $\Asmall,\Acover$ directly turns out to be somewhat difficult if the domain $D$ has a complicated shape, so \eqref{eq:Tcover} and \eqref{eq:Tsmall} allow us to avoid this difficulty by working directly with sets of angles $\Tsmall,\Tcover$.

\begin{lem}[Initial step]\label{initialstep}
Let $K$ and $\sigma$ be as in \Cref{itervbloc}, and let $G\subset K$ be a compact curve.  If $N \in \mathbb{N}$ is sufficiently large, we can construct a set $\mathcal{L}_0$ of line segments such that 
\begin{enumerate}[label=(\roman*)]
\item $\mathcal{L}_0$ consists of $N$ line segments of length $\frac{2|G|}{N \sigma}$, each lying in the $\frac{2|G|}{N\sigma}$ neighborhood of $G$;
\item $\phi_{\al}(G) \subset \phi_{\al}(\mathcal{L}_0)$ for all $\al$ in $\Acover$;
\item Each line segment in $\mathcal{L}_0$ points in the direction $\thetamin$.
\end{enumerate}
\end{lem}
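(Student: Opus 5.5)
Divide $G$ into $N$ consecutive subarcs $G_1,\dots,G_N$ of equal arc length $|G|/N$, and pick a point $x_j\in G_j$, say the arc-length midpoint. Let $L_j$ be the segment centered at $x_j$ with direction $\thetamin$ and length $\frac{2|G|}{N\sigma}$, so half-length $\ell_L=\frac{|G|}{N\sigma}$, and set $\mathcal{L}_0=\{L_1,\dots,L_N\}$. Properties (i) and (iii) are then immediate, since each $L_j$ passes through $x_j\in G$ and so lies in the $\frac{2|G|}{N\sigma}$-neighborhood of $G$. The substance is (ii). It suffices to show that for every $\alpha\in\Acover$ and every $x\in G_j$ the fiber of $\phi_\alpha$ through $x$ meets $L_j$, i.e.\ $\phi_\alpha(x)\in\phi_\alpha(L_j)$.

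For (ii), apply \Cref{geomlem phi} with $\al_0=\alpha$, $x_0=x$, $x_L=x_j$, $\theta_L=\thetamin$, and a radius $r$ chosen below. Every point of $G_j$ lies within arc length $|G|/(2N)$ of $x_j$, so $\|x_L-x_0\|\le |G|/(2N)$.

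The angle hypothesis of \Cref{geomlem phi} requires $|\theta_\alpha(y)-\thetamin|\ge\sigma$ for all $y\in\overline B(x,r)$. Take $N$ large enough that the radius $r$ below is at most the radius of $K$. Then $\overline B(x,r)\subset 2K\subset D_\alpha$, since $\alpha\in\Acover$. For such $y$ we have $\theta_\alpha(y)\in\Tcover$ by \eqref{eq:Tcover}, while $\thetamin\in\Tsmall$. By \eqref{eq:dist Tcover Tsmall} this gives $|\theta_\alpha(y)-\thetamin|\ge\sigma$. This is where the choice $\theta_L=\thetamin$ is used: any direction in $\Tsmall$ would work.

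The length conditions are handled by the form \eqref{eq:geomlem 1}, which needs $\min\{\ell_L,r\}\ge\frac{2}{\sigma}\|x_L-x_0\|$. Since $\frac{2}{\sigma}\cdot\frac{|G|}{2N}=\frac{|G|}{N\sigma}=\ell_L$, the choice $r=\frac{|G|}{N\sigma}$ satisfies it with equality. This radius is at most the radius of $K$ once $N$ is large, which is exactly where "$N$ sufficiently large" enters. We may shrink $\sigma$ slightly if necessary so that $\sigma<\pi/2$, as \Cref{geomlem} requires.

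The only delicate point I expect is the bookkeeping of constants: the factor $2/\sigma$ in \eqref{eq:geomlem 1} has to line up with the prescribed length $\frac{2|G|}{N\sigma}$. This is arranged by taking $x_j$ to be the midpoint of $G_j$, which gives the distance bound $|G|/(2N)$. Containment of the balls in $2K$ is easy once $N$ is large.
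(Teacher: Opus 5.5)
Your proposal is correct and is essentially the paper's proof: the same partition of $G$ into $N$ arcs of length $|G|/N$, segments of half-length $|G|/(N\sigma)$ centered at the arc midpoints in direction $\thetamin$, and \Cref{geomlem phi} applied via \eqref{eq:geomlem 1} using $\dist(\Tcover,\Tsmall)=\sigma$. The only differences are cosmetic. The paper takes the ball radius to be the radius of $K$ (requiring $4|G|/(N\sigma)$ to be smaller than it), whereas you take $r=|G|/(N\sigma)$. Your aside about shrinking $\sigma$ is unnecessary, since the strict inequality in \eqref{eq:theta set 2K} already gives $\sigma<\pi/2$; done literally, it would break the equality you rely on in \eqref{eq:geomlem 1}.
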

\begin{proof}
Write $K = \overline B(x_0,r)$. Partition the curve $G$ into $N$ arcs $\{G_k\}_{k=1}^N$ of length $|G|/N$. We let $x_k$ denote the center of the arc $G_k$. Observe that each arc $G_k$ is contained in the ball $K$. For each $k$, let $L_k$ be the line segment of length $\frac{2|G|}{N \sigma} =: 2\ell_L$ centered at $x_k$ pointing in the direction $\thetamin$. The set $\mathcal{L}_0$ will be the union of the segments $L_k$. Clearly (i) and (iii) are satisfied, and $\mathcal{L}_0$ certainly lies in a $\frac{2|G|}{\sigma N}$ neighborhood of $G$.

We claim that if $N$ is sufficiently large, then (ii) is satisfied.  To show (ii), it suffices to show $\phi_\al(G_k) \subset \phi_\al(L_k)$ for every $k$ and every $\alpha \in \Acover$. Fix $k$ and $\alpha \in \Acover$, and let $x \in G_k$. Since $x \in G_k$, it follows that $\norm{x - x_k} \leq \frac{1}{2} |G_k| = \frac{|G|}{2N}.$ Also $x \in G_k \subset G \subset K$, so for any $y \in B(x,r)$, we have $y \in 2 K$, so $\theta_\alpha(y) \in \Tcover$ (recall  \eqref{eq:Tcover}), so $|\theta_\alpha(y) - \thetamin| \geq \sigma$ (recall \eqref{eq:dist Tcover Tsmall}). If $N$ is chosen sufficiently large so that $4|G|/(N\sigma ) < r$, then
\[
\min\{\ell_L, r\} \geq
\frac{2}{\sigma} \|x_k-x\|
\]
so we can apply \Cref{geomlem phi} (recall \eqref{eq:geomlem 1}) to conclude that $\phi_\al(x) \in \phi_\al(L_k)$.
\end{proof}

\begin{lem}[Iterative step]\label{iterativestep}
Let $L \subset K$ be a line segment of length $l$ pointing in the direction $\theta$. Suppose $\theta+\delta \in \Tsmall$.  If the integer $N \in \mathbb{N}$ is sufficiently large, then there exists a set $\mathcal{L}$ of line segments such that
\begin{enumerate}[label=(\roman*)]
\item  $\mathcal{L}$ consists of $N$ line segments of length $\left(1 +  \frac{\delta}{\sigma} \right) \frac{l}{N}$, each lying in the $(1 + \frac{\delta}{\sigma})\frac{l}{N}$-neighborhood of $L$,
\item $\phi_{\al}(L) \subset \phi_{\al}(\cup \mathcal{L})$ for all $\al$ in $\Acover$,
\item Each line segment in $\mathcal{L}$ points in the direction $\theta + \delta$.
\end{enumerate}
\end{lem}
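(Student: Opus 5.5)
We subdivide $L$ and place rotated segments at the subdivision points, exactly as in the proof of \Cref{initialstep}, but now using the second set of hypotheses \eqref{eq:geomlem 2} of \Cref{geomlem phi}. Write $L$ as the union of $N$ consecutive subsegments $I_1,\dots,I_N$, each of length $l/N$, and let $y_k$ denote the center of $I_k$. Let $L_k$ be the segment centered at $y_k$, pointing in direction $\theta+\delta$, with length $(1+\frac{\delta}{\sigma})\frac{l}{N} =: 2\ell$, and set $\mathcal{L} = \{L_1,\dots,L_N\}$. Properties (i) and (iii) hold by construction. Indeed, every point of $L_k$ is within distance $\ell \le (1+\frac\delta\sigma)\frac lN$ of $y_k \in L$, so $L_k$ lies in the required neighborhood of $L$.

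For (ii), it suffices to show $\phi_\al(I_k)\subset\phi_\al(L_k)$ for each $k$ and each $\al\in\Acover$. Fix $x\in I_k$ and apply \Cref{geomlem phi} with the segment $L_k$ (so $x_L=y_k$ and $\theta_L=\theta+\delta$), the point $x_0=x$, the parameter $\al_0=\al$, and a ball $\overline B(x,r)$ with $r=\mathrm{rad}(K)$. Since $x\in L\subset K$, we have $\overline B(x,r)\subset 2K\subset D_\al$. The angle hypothesis of \Cref{geomlem phi} holds with the same $\sigma$: for $y\in 2K$ we have $\theta_\al(y)\in\Tcover$, while $\theta+\delta\in\Tsmall$, so \eqref{eq:dist Tcover Tsmall} gives $|\theta_\al(y)-(\theta+\delta)|\ge\sigma$.

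It remains to check the length conditions. The vector $x-y_k$ is parallel to $L$, which has direction $\theta$, so its angle with $L_k$ is $\delta$; this plays the role of $\epsilon$ in \Cref{geomlem phi}. We need $\delta<\sigma<\pi/2$. This should be arranged in the global setup: $\delta$ is a small fixed step and $\sigma$ may be shrunk if necessary. If that is not possible, I would note it as a standing assumption, since presumably $\delta\ll\sigma$ is intended. We also have $\|x-y_k\|\le \frac{l}{2N}$. Then
\[
\ell=\Bigl(1+\tfrac{\delta}{\sigma}\Bigr)\frac{l}{2N}\ge \Bigl(1+\tfrac\delta\sigma\Bigr)\|x-y_k\|,
\]
which is the first condition in \eqref{eq:geomlem 2}. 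The second condition, $r\ge \frac{2\delta}{\sigma}\|x-y_k\|$, holds once $N$ is large enough that $\frac{\delta l}{\sigma N}\le r$. Thus \eqref{eq:geomlem 2} holds, \Cref{geomlem phi} applies, and we conclude $\phi_\al(x)\in\phi_\al(L_k)$.

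The main subtlety is the angle bookkeeping. The relevant angle $\epsilon$ must be exactly $\delta$, and the condition on $\ell$ must be met with the precise factor $1+\delta/\sigma$ rather than merely up to constants. That is why the segment lengths are chosen as stated: this sharp factor is what gives the $(1+O(\delta))$ growth in total length per iteration. A second point to check is that $\overline B(x,r)\subset 2K$ holds for every $x$ on $L$, which uses $L\subset K$.
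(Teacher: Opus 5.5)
Your proposal is correct and follows the paper's proof essentially verbatim. Like the paper, you subdivide $L$ into $N$ equal pieces, center rotated segments of length $(1+\delta/\sigma)l/N$ at the midpoints, and apply \Cref{geomlem phi} through \eqref{eq:geomlem 2} with $\epsilon=\delta$, $r$ the radius of $K$, and $N$ large enough that $\delta l/(\sigma N)\le r$. The requirement $\delta<\sigma<\pi/2$ that you flag is also used implicitly by the paper and is harmless, since $\delta$ can be shrunk in the proof of \Cref{itervbloc}; your use of the closed ball $\overline B(x,r)\subset 2K$ is slightly more careful than the paper's open ball.
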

\begin{proof}
Write $K = \overline B(x_0,r)$. Suppose $L$ is a line segment of length $l$ pointing in the direction $\theta$. Let $\{L_k\}_{k=1}^N$ be the line segments formed by partitioning $L$ into $N$ pieces of equal length, and let $x_k$ denote the center of the line segment $L_k$. For each $k$, let $L_k'$ denote the line segment of length $\left(1 +  \frac{\delta}{\sigma} \right) \frac{l}{N} := 2\ell_L$ centered at $x_k$ pointing in the direction $\theta + \delta$. Let $\mathcal{L}$ denote the union of the line segments $L_k'$. Then $\mathcal{L}$ obviously satisfies conditions (i) and (iii) in the lemma.

We claim that if $N$ is sufficiently large, then (ii) is satisfied.  To show (ii), it suffices to show $\phi_\al(L_k) \subset \phi_\al(L_k')$ for every $k$ and every $\alpha \in \Acover$. Fix $k$ and $\alpha \in \Acover$, and let $x \in L_k$. The choice of $L_k'$ guarantees that the angle between $L_k$ and $L_k'$ is exactly $\delta$. Since $x \in L_k$, it follows that $\norm{x - x_k} \leq \frac{l}{2N}.$ Also $x \in L_k \subset L \subset K$, so for any $y \in B(x,r)$, we have $y \in 2 K$, so $\theta_\alpha(y) \in \Tcover$, so $|\theta_\alpha(y) - (\theta+\delta)| \geq \sigma$ (recall \eqref{eq:dist Tcover Tsmall}). If $N$ is chosen sufficiently large so that $\delta l/(\sigma N) < r$, then
\begin{equation}
    \ell_L \geq \left(1 +  \frac{\delta}{\sigma} \right)  \|x_k-x\| \qquad\text{and}\qquad r \geq \frac{2\delta}{\sigma} \|x_k-x\|.
\end{equation}
so we can apply \Cref{geomlem phi} (recall \eqref{eq:geomlem 2}) to conclude that $\phi_\al(x) \in \phi_\al(L_k')$.
\end{proof}

\subsection{Proof of local iterated Venetian blind theorem}

\begin{proof}[Proof of \Cref{itervbloc}]
Let $\phi : D \to \R$, $\Asmall$, $\Acover$,  $\sigma > 0$, $K \subset \R^2$, $\e > 0$, and $G \subset K$ be as in the statement of Theorem \ref{itervbloc}. Let $\Tcover$ and $\Tsmall = [\thetamin,\thetamax]$ be as in \eqref{eq:Tcover} and \eqref{eq:Tsmall}. 

Since $\phi$ is admissible, it satisfies the rectangle squishing condition (recall \Cref{dfn:rectangle squishing condition}). By applying this condition to $\epsilon$ and $(\Asmall \times 2K) \cap D$, there exists $\delta = \delta(\Asmall, K, \epsilon) > 0$ such that the following holds: Let $L \subset \mathbb{R}^2$ be a line segment of length $|L| \le \delta$. Let $\theta_L$ denote the direction of $L$. Then for all $\alpha \in \Asmall$, we have the implication
\begin{multline}
\label{eq:squishing implication}
(\exists x \in D_\alpha \cap \nbhd{L}{\delta|L|} \cap 2K \text{ such that } |\theta_\alpha(x) - \theta_L| \le \delta)
\\
\implies
|\phi_\alpha(\nbhd{L}{\delta|L|} \cap D_\alpha)| \leq \epsilon |L|
\end{multline}
By making $\delta$ smaller if necessary, we can assume that $\delta < \min(1,\epsilon)$ and $\thetatot := \thetamax-\thetamin$ is a multiple of $\delta$. Let $M = \thetatot/\delta$.

We will inductively construct a sequence of sets $\{\mathcal{L}_i\}_{i=0}^{M}$ and a sequence of natural numbers $\{N_i\}_{i=0}^{M}$ with the following properties:
\begin{enumerate}[label=(\Alph*)]
\item \label{item:number}
The set $\mathcal{L}_i$ is the union of $\prod_{k=0}^i N_k$ line segments, each pointing in the direction $\theta_i := \thetamin + \delta i$ and of length 
\[
\ell_i
=
\frac{1}{\prod_{k=0}^i N_k} \frac{2|G|}{\sigma} \left(1+\frac{\delta}{\sigma}\right)^i 
.
\]
\item \label{item:l_i}
\begin{enumerate}[label=(\roman*)]
    \item For $i = 0$: $\ell_0 < \frac{\delta}{2}$
    \item For $i \geq 1$: $\ell_i < \frac{\delta}{2}\ell_{i-1}$
\end{enumerate}    
\item \label{item:nbhd}
\begin{enumerate}[label=(\roman*)]
    \item For $i = 0$: $\cup \mathcal{L}_0$ lies within the $\ell_0$-neighborhood of $G$.
    \item For $i \geq 1$: $\cup \mathcal{L}_i$ lies within the $\ell_i$-neighborhood of $\cup \mathcal{L}_{i-1}$
\end{enumerate}
\item \label{item:cover}
\begin{enumerate}[label=(\roman*)]
    \item For $i = 0$: If $\al \in \Acover$, then $\phi_{\al}(G) \subset \phi_{\al}(\cup \mathcal{L}_0)$
    \item For $i \geq 1$: If $\al \in \Acover$, then $\phi_{\al}(\cup \mathcal{L}_{i-1}) \subset \phi_{\al}(\cup \mathcal{L}_i)$
\end{enumerate}
\end{enumerate}

For the base case, choose $N_0$ so large that \Cref{initialstep} can be applied to $G$. Apply \Cref{initialstep} to arrive at a set $\mathcal{L}_0$ consisting of $N_0$ many line segments pointing in the direction $\thetamin$, each of which has length 
$
\ell_0 := \frac{2|G|}{N_0 \sigma}
$. By construction and by \Cref{initialstep}, properties \ref{item:number}, \ref{item:nbhd}, \ref{item:cover} are satisfied. By choosing $N_0$ larger if needed, \ref{item:l_i} is satisfied.

Next, for the inductive step, suppose we have $\mathcal{L}_{i-1}$ satisfying the properties above. We choose $N_i$ sufficiently large that we can apply \Cref{iterativestep} (with $\theta=\theta_{i-1}$ so that $\theta+\delta=\theta_i$) to each line segment in $\mathcal{L}_{i-1}$. We let $\mathcal{L}_i$ be the set of line segments obtained this way.  By \Cref{iterativestep}, every line segment in $\cL_i$ has length 
$
\ell_i
=
\left(1+\frac{\delta}{\sigma}\right) \frac{\ell_{i-1}}{N_i}
$,
which implies \ref{item:number}. We choose $N_i$ sufficiently large so that \ref{item:l_i} is satisfied. By \Cref{iterativestep}, \ref{item:nbhd} and \ref{item:cover} are satisfied. This completes the inductive construction.

The set $G' := \cup \mathcal{L}_{M}$ is a union of line segments pointing in the direction $\thetamin + M \delta = \thetamax$. By \ref{item:cover}, it follows that $\phi_{\al}(G) \subset \phi_{\al}(G')$ for all $\al \in \Acover$. By \ref{item:nbhd}, $G'$ is contained in the ($\sum_{i=0}^M \ell_i$)-neighborhood of $G$. Note that by \ref{item:l_i}, $\sum_{i=0}^M \ell_i \leq \sum_{i=0}^M (\frac{\delta}{2})^i \ell_0 < \delta \leq \epsilon$, so $G' \subset \nbhd{G}{\epsilon}$.

It remains to check that $\phi_{\al}(G')$ is small for $\al \in \Asmall$. We start by making a few observations.

The line segments in $\{\mathcal{L}_i\}_{i=0}^M$ form a tree structure. For $L \in \mathcal{L}_i$, let $\mathcal{L}_{i+1}(L) \subset \cL_{i+1}$ denote its children obtained via \Cref{iterativestep}. More generally, if $L \in \mathcal{L}_i$ and $j > i$, we write $\mathcal{L}_j(L)$ for the union of the descendants of $L$ in $\mathcal{L}_j$.

We claim that for every $L \in \bigcup_{i=0}^M \mathcal{L}_i$,
\begin{gather}
\label{eq:L_M in nbhd}
    \mathcal{L}_M(L) \subset \nbhd{L}{\delta |L|} 
    \\
\label{eq:L leq sum leaves}
    |L| \leq \sum_{L' \in \cL_M(L)} |L'|
\end{gather}
To see \eqref{eq:L_M in nbhd}, observe that if $L \in \mathcal{L}_i$, then by property (C), the set $\mathcal{L}_M(L)$ lies in the ($\sum_{j=i+1}^M \ell_j$)-neighborhood of $L$. Note that
$
\sum_{j=i+1}^{M} \ell_j
\leq
\sum_{j=i+1}^{M} \left(\frac{\delta}{2}\right)^{j-i} \ell_i
\leq  \delta \ell_i
,
$ so we have \eqref{eq:L_M in nbhd}. For \eqref{eq:L leq sum leaves}, observe that for every $L \in \bigcup_{i=0}^M \mathcal{L}_i$, we have $|L| \leq \sum_{L' \text{ child of } L} |L'|$, so by induction, \eqref{eq:L leq sum leaves} holds.

Now we check that $\phi_{\al}(G')$ is small for $\al \in \Asmall$. Fix $\al \in \Asmall$. 

For $L \in \mathcal{L}_i$, we say that $L$ is \emph{good} if $|\phi_\alpha(\cL_M(L))| \leq \epsilon |L|$. Note that if $L$ is not good, then by \eqref{eq:L_M in nbhd} and the rectangle squishing condition (specifically, the contrapositive of \eqref{eq:squishing implication}), there does not exist $y \in \nbhd{L}{\delta |L|} \cap D_\al$ such that $|\theta_\al(y) - \theta_i| \leq \delta$.

We claim that every $L \in \cL_M$ has a good ancestor or is itself good. Suppose for contradiction that this is false for some $L \in \cL_M$. Take some $y \in L \cap D_\alpha$. (If $L \cap D_\alpha = \emptyset$, then $L$ is good.) For all $i \in \{0, \ldots, M\}$, we have $|\theta_\alpha(y) - \theta_i| > \delta$. However, $\theta_\alpha(y) \in \Tsmall$, and $\{\theta_i\}_{i=0}^M$ forms a $\delta$-net for $\Tsmall$, yielding a contradiction.

Let $\mathcal{G} \subset \bigcup_{i=0}^M \mathcal{L}_i$ be the set of good line segments that have no strictly earlier good ancestor. By the claim in the previous paragraph, $\{\mathcal{L}_M(L)\}_{L \in \mathcal{G}}$ forms a partition of $\cL_M$, so $G' = \bigsqcup_{L \in \mathcal G} \cL_M(L)$. By \eqref{eq:L leq sum leaves},
\[
|\phi_\al(G')|
\leq
\sum_{L \in \mathcal G}
|\phi_\alpha(\cL_M(L))|
\leq
\epsilon 
\sum_{L \in \mathcal G}
|L|
\leq
\epsilon 
\sum_{L \in \mathcal G}
\sum_{L' \in \cL_M(L)}
|L'|
=
\epsilon|G'|
.
\]

Finally, note that by \ref{item:number}
\[
|G'|
=
\ell_M \prod_{k=0}^M N_k
=
\frac{2|G|}{\sigma} \left(1+\frac{\delta}{\sigma}\right)^{\thetatot/\delta}
\leq \frac{2|G|}{\sigma} e^{\thetatot/\sigma}
=:
C
\]
This shows that $|\phi_\alpha(G')| \leq  C \epsilon$ for all $\alpha \in \Asmall$. Crucially, $C$ is independent of $\epsilon$. This completes the proof.
\end{proof}

\section{The topological base argument}
\label{Topological base section}

\subsection{Setup} 
In this section, we complete the proof of our main theorem, \Cref{thm:main}. As noted in \Cref{rmk:rectangle squishing condition}, we will prove the following slightly more general version:

\begin{thm}[Generalized prescribed projection theorem]
\label{thm:main v2}
Let $\phi : D \to \R$ be admissible. Suppose $\{F_\al\}_{\al\in\R}$ is a family of subsets of $\R$ such that $F_\al\subset \phi_\al(D_\al)$ for each $\al$, and
$
\bigcup_{\alpha\in\R} (\{\alpha\}\times F_\alpha)
$
is a measurable subset of $\R^2$.  Then there exists a Borel set $E\subset \R^2$ such that
\begin{align*}
&\phi_\alpha(E \cap D_\alpha) \supset F_\alpha &&\text{for all } \alpha \in \R
\\
&|\phi_\alpha(E \cap D_\alpha)\setminus F_\alpha|=0
&&\text{for almost every } \alpha \in \R.
\end{align*}
\end{thm}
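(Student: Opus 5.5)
We follow the classical Davies/Falconer strategy as adapted in \cite{CMT23}: a topological (Baire-category or iterative) argument that uses \Cref{itervb} as its single geometric input. First we reduce to the case where the target sets are open. Let $F = \bigcup_\al \{\al\}\times F_\al$. Up to a null set in $\R^2$, $F$ is a countable intersection of open sets; by Fubini, for almost every $\al$ the slice differs from $F_\al$ only by a null set. The points where $F_\al$ is not captured will be covered exactly by adding, for each such $(\al,t)$, a single point of $D_\al$ on the fiber $\phi_\al^{-1}(t)$. This adds nothing in measure for almost every $\al$, provided the exceptional set is handled by choosing $E$ to contain a Borel set of fiber points. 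In practice, the standard trick is to cover all of $F$ exactly by a second set whose projections are negligible for a.e.\ $\al$, so the main work is the open case.

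So assume $U\subset \R^2$ is open with $U_\al\subset \phi_\al(D_\al)$. We want a set $E$ with $\phi_\al(E\cap D_\al)\supset U_\al$ for all $\al$ and $|\phi_\al(E\cap D_\al)\setminus U_\al|=0$ for a.e.\ $\al$. We build $E$ as a $\limsup$/$G_\delta$-type set from a countable sequence of stages.

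\begin{enumerate}
\item Exhaust $U$ by countably many closed rectangles $A_j\times I_j$, where $A_j$ is a compact interval. For each $j$ and each $\al\in A_j$, the set $\phi_\al^{-1}(I_j)$ is locally a union of fibers. Using convexity and openness, choose compact curves $G$ in $\bigcap_{\al\in A_j}D_\al$ (transversal to the fibers) whose $\phi_\al$-images contain $I_j$ for every $\al\in A_j$. This may require subdividing $A_j$.
\item Fix an error sequence $\e_n\to 0$ and compact sets $\Asmall^{(n)}$ disjoint from $A_j$, whose complements inside the region where $I_j\not\subset U_\al$ have measure $<\e_n$.
\item Apply \Cref{itervb} with $\Acover=A_j$. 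This replaces $G$ by a finite union of disks $G'$ that still covers $I_j$ for all $\al\in A_j$ but has projections of measure $<\e_n$ for $\al\in\Asmall$.
\item Iterate inside each disk, i.e.\ apply \Cref{itervb} again to curves inside the disks, producing nested compact sets $E_n$. Take $E=\bigcap_n \overline{\bigcup_{m\ge n}E_m}$, or a nested intersection, so that coverage is preserved in the limit by compactness.
\end{enumerate}

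Compactness gives $\phi_\al(\bigcap E_n)=\bigcap\phi_\al(E_n)$ for nested compacts, so coverage passes to the limit. Meanwhile, for a.e.\ $\al$ outside $U$'s relevant slice, Borel–Cantelli on the $\e_n$ bounds gives null projection.

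The main obstacle is organizing the nested construction so that, simultaneously for all $\al$, exact coverage of $F_\al$ is preserved while the small sets $\Asmall$ exhaust almost all parameters where coverage is not required. Here the topological-base argument is delicate. We would try to mimic \cite{CMT23}: define a base of open sets $\{(\al,t)\}$ and use Baire category in the space of compact sets with the Hausdorff metric. Specifically, we would show that the set of compact $E$ whose projections cover the prescribed targets and have small excess is a dense $G_\delta$, with density supplied by \Cref{itervb}.
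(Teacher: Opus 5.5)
\textbf{There is a genuine gap.} The proof breaks at your ``reduction to the open case'', and the limiting mechanism you propose cannot be repaired.

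Passing to a $G_\delta$ set $H=\bigcap_m F_m\supset F$, with $F_m$ open and decreasing and $|H\setminus F|=0$, is legitimate. Choosing $H\supset F$ also makes your step of attaching one fiber point to each missed pair $(\al,t)$ unnecessary. That is fortunate, because that step is not free: uncountably many added points contribute uncountably many curves $S_x$, and showing their union is null is the null-target case of the theorem itself.

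But solving the open problem for each $F_m$ separately says nothing about $H$. A union of the solutions has excess at least $|F_1\setminus H|$, and an intersection of unrelated solutions need not cover anything.

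Moreover, your final set, $\bigcap_n\overline{\bigcup_{m\ge n}E_m}$ or an intersection of nested compacts, is closed. No closed (or even $F_\sigma$) set $E$ can satisfy the theorem in general:
\begin{itemize}
\item For such $E$, $\phi^*(E)=\Phi((\R\times E)\cap D)$ is $\sigma$-compact; write it as $\bigcup_k C_k$.
\item Let $F$ be a null dense $G_\delta$ subset of the open set $W=\phi^*(\R^2)$. Then $F=\bigcup_k (C_k\cap F)$.
\item Since $F$ is a Baire space, there are $k$ and an open ball $O$ with $\emptyset\neq O\cap F\subset C_k$.
\item Since $C_k$ is closed and $F$ is dense in $W$, $C_k\supset O\cap W$, so $|\phi^*(E)\setminus F|>0$.
\end{itemize}
The same obstruction rules out your Baire-category variant in the hyperspace of compact sets.

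\textbf{The missing idea is the paper's system of disks.} One builds countable families of open disks $\cB_m$ such that:
\begin{itemize}
\item $\phi^*(\cup\cB_m)\supset F_m$;
\item $|(\phi^*(\cup\cB_m)\setminus F_m)\cap([-m,m]\times\R)|\le 2^{-m}$;
\item (inheritance) if $B\in\cB_{m-1}$ and $y\in\phi^*(B)\cap F_m$, then $y\in\phi^*(B')$ for some $B'\in\cB_m$ with $B'\subset B$.
\end{itemize}
Then the $F_{\sigma\delta}$ set $E=\bigcap_m\bigcup_{B\in\cB_m}\overline B$ works. Each $y\in F$ is carried by a nested chain of single disks $B_1\supset B_2\supset\cdots$, which gives a limit point on its fiber; coverage passes to the limit this way, not by compactness of whole stages. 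Closing each disk separately adds only a null set by \Cref{difference}.

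Constructing $\cB_m$ requires covering $F_m\cap\phi^*(B)$ from inside each $B\in\cB_{m-1}$. So you need a statement for every pair of open sets $(B,V)$ with $V\subset\phi^*(B)$: $V$ has a base of sets $U$, each coverable by a finite union of disks in $B$ with excess less than $\e$ on $[-N,N]\times\R$ (\Cref{base}).

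Your rectangle exhaustion of one fixed open set does not give this. It also leaves the excess $\phi_\al(G')\setminus I_j$ for $\al\in A_j$ uncontrolled, since on $A_j$ \Cref{itervb} only gives $G'\subset\nbhd{G}{\e}$.

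This is why the paper's base elements are $\operatorname{int}[(I\times\R)\cap\phi^*(G)]$, with $I=(\al_0+\eta,\al_0+2\eta)$ and $G\subset B$ a short sub-curve of a fiber $\phi_{\al_0}^{-1}(\be_0)$. On the strip $I\times\R$ the target is exactly $\phi^*(G)$ up to a null boundary. By \Cref{differenceset}(c), the sets $\phi_\al(G)$ have length $O(\eta)$ on the transition region $I'\setminus I$, where $I'$ is the $\de$-neighborhood of $I$. So applying \Cref{itervb} with $\Acover=\overline I$ and $\Asmall=[-N,N]\setminus I'$ gives small total excess.
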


The rough idea of the proof is as follows. Theorem \ref{itervb} can be seen as a primitive version of Theorem \ref{thm:main v2}, where our prescription sets $F_\al$ are required to be of the form $\phi_\al(G)$ for some curve $G$ and our errors are allowed to be of measure $\e$ rather than measure zero.  While restricting to sets $F_\al$ of this form may seem like a significant loss of generality, the key idea is that these sets generate a topological basis for the open sets in $\R^2$.  Thus, Theorem \ref{itervb} is a stepping stone to being able to handle open sets.  From there, it is not difficult to extend to general measurable sets.

Before we proceed, we establish some new notation that will be helpful.  Let $\phi : D \to \R$ be admissible. For $E \subset \R^2$, define
\[
\phi^*(E)=\bigcup_{\alpha\in \R} \{\alpha\}\times \phi_\alpha(E).
\]
Note that for some values of $\alpha$ the set $E$ may not be contained in the domain $D_\alpha$, in which case we make the convention $\phi_\alpha(E)=\phi_\alpha(E\cap D_\alpha)$. It follows immediately from the definition that
\begin{equation}
\label{stardef}
(\alpha,\beta)\in \phi^*(E)\hspace{.25in} \text{if and only if} \hspace{.25in} \beta\in \phi_\alpha(E).
\end{equation}
To summarize, each of the projections $\phi_\al$ maps a subset of $\R^2$ to a subset of $\R$, whereas $\phi^*$ is a set function which takes subsets of $\R^2$ to subsets of $\R^2$.  Going forward, it will be helpful to think of the input and output space of $\phi^*$ as two different copies of $\R^2$.  Thus, when we write things like $\phi^*(E)\supset F$, we will write elements of $E$ as $x=(x_1,x_2)$ and elements of $F$ as $(\al,\be)$, and we will think of ``$x$-space'' and ``$(\al,\be)$-space'' as different copies of $\R^2$.

\begin{figure}
\centering
\begin{minipage}[b]{0.45\linewidth}
\centering
\includegraphics[page=1]{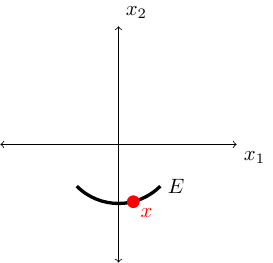}
\end{minipage}
\begin{minipage}[b]{0.45\linewidth}
\centering
\includegraphics[page=2]{phi_star.pdf}
\end{minipage}
\caption{A point $x$ and set $E$ in $x$-space (left), and the sets $\phi^*(x)$ and $\phi^*(E)$ in $(\al,\be)$-space (right)}
\label{fig: phistar}
\end{figure}

As an example, consider the projections $\phi_\al(x)=x_2+\sqrt{1-(\al-x_1)^2}$, defined so that we have $\be=\phi_\al(x)$ if and only if $x\in D_\al$ and $|x-(\al,\be)|=1$ and $\beta > x_2$.  For concreteness, take $D_\al=[\al-1/\sqrt{2},\al+1/\sqrt{2}]$, so that $\be=\phi_\al(x)$ holds if and only if $(\al,\be)$ is in the upper quarter circle centered at $x$.  In \Cref{fig: phistar}, the left image shows a point (in red) and a curve in the input space.  The right image shows the curve $\phi^*(x)$ and the set $\phi^*(E)$, which is a union of curves of the form $\phi^*(x)$ as $x$ ranges over $E$.

\begin{thm}
\label{prescribedprojection}
Let $\phi$ be admissible, and let $\phi^*$ be the set function defined above.  For any measurable set $F\subset \phi^*(\R^2)$, there exists a set $E\subset \R^2$ such that
\begin{enumerate}[label=(\alph*)]
\item $\phi^*(E)\supset F$,
\item $|\phi^*(E)\setminus F|=0$.
\end{enumerate}
\end{thm}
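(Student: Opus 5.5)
The plan is the classical Davies/Falconer topological-base scheme, with \Cref{itervb} as the basic building block. \textbf{Step 1} is to prove an open-set version. For every open $U\subset\R^2$ in $(\al,\be)$-space, every compact set $Q$ (or compact curve) in $x$-space, and every $\eta>0$, we want a set $E'\subset\nbhd{Q}{\eta}$ such that $\phi^*(E')\supset \phi^*(Q)\cap U$ and $|\phi^*(E')\setminus U|<\eta$. To get this:
\begin{itemize}
\item Write $U$ as a countable union of open rectangles $A\times I$, where $A$ is an open interval in $\al$.
\item Exhaust $A$ by compact intervals $\Acover$.
\item Take $\Asmall$ to be compact sets exhausting (in measure) the part of the $\al$-range outside $\Acover$. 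Since $\al$ ranges over a bounded region once $Q$ is compact, the leftover has small measure.
\item Cover $\{x\in Q: \phi_\al(x)\in I \text{ for some } \al\in\Acover\}$ by finitely many short compact curves (segments) $G$ lying in $\bigcap_{\al\in\Acover}D_\al$, small enough that $\phi_\al(G)\subset I$ for all $\al\in\Acover$. This uses continuity of $\phi$ and compactness.
\item Apply \Cref{itervb} to each $G$ with tolerance $\eta 2^{-n}$ and take the union of the outputs.
\end{itemize}
The covering property (b) of \Cref{itervb} gives the inclusion over $\Acover\times I$. Property (c) gives measure $<\eta 2^{-n}$ over $\Asmall$. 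Over the remaining $\al$ (in $A\setminus\Acover$ and near the edges) we only use Fubini with the small measure of that $\al$-set times a bounded projection length.

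\textbf{Step 2}: iterate to reach measure zero for an open target. Start with $E_0$, a large compact set with $\phi^*(E_0)\supset F$ locally; first restrict to bounded pieces of $F$ and take a countable union at the end. Given $E_{n}$, apply Step 1 with $U=U_n$ open, $U_n\supset F$, $|U_n\setminus F|<2^{-n}$ (first for $F$ open or $G_\delta$), and $Q=\overline{E_n}$. Since Step 1 produces finite unions of disks, each $E_n$ is compact. Set $E=\bigcap_n \overline{E_n}$, arranging a nested decreasing sequence by choosing each new set inside a small neighborhood of the previous one and intersecting with it.

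The key is passing the inclusion $\phi^*(E)\supset F$ to the limit. For $(\al,\be)\in F$, each fiber $\phi_\al^{-1}(\be)\cap \overline{E_n}$ is a nonempty compact set. The sets are nested, so their intersection is nonempty, giving $\be\in\phi_\al(E)$. Here we need $\phi_\al(\overline{E_n}\cap D_\al)$ to behave well near $\partial D_\al$. I would keep all sets inside compact subsets of $D$ (using $D$ open and working on compact exhaustions), which avoids boundary issues. Meanwhile $\phi^*(E)\subset\phi^*(E_n)\subset U_n\cup(\text{error}_n)$, so $|\phi^*(E)\setminus F|=0$.

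\textbf{Step 3}: general measurable $F$. Take a $G_\delta$ set $H\supset F$ with $|H\setminus F|=0$ and run Step 2 with the $U_n$ decreasing to $H$; this gives the measure statement. To get genuine containment of $F$ itself, note that the nonempty-intersection argument works for all of $H$, and $H\supset F$. The inclusion $\phi^*(E)\supset F$ is exact because it holds for every point of $F$, not just almost every point.

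I expect the main obstacle to be the localization and boundary issues: the domains $D_\al$ vary, so $\phi^*(Q)$ involves only those $\al$ with $Q\cap D_\al\ne\emp$. We must make sure the curves fed to \Cref{itervb} lie in $\bigcap_{\al\in\Acover}D_\al$, and that at no stage do we lose points of $F$ whose fibers exit through $\partial D_\al$. I would handle this by covering $D$ with countably many product boxes $J\times B$ with $\overline{J\times B}\subset D$, running the argument separately for $F\cap \phi^*$-images over each box, and taking a countable union of the resulting sets $E$. Both conclusions survive countable unions.
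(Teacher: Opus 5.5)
There is a genuine gap, and it is structural: the limit object you build cannot work for every measurable $F$.

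\textbf{Why the scheme fails.} In Step 2 each piece of $E$ is $\bigcap_n\overline{E_n}$ with $\overline{E_n}$ compact. Step 3 and the box decomposition then take countable unions, so your final $E$ is $\sigma$-compact. In that case $(\R\times E)\cap D$ is $\sigma$-compact, and so is $\phi^*(E)=\Phi((\R\times E)\cap D)$.

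Now let $F$ be a dense $G_\delta$ subset of an open disk $W\subset\phi^*(\R^2)$ with $|F|=0$. Conclusion (b) forces $|\phi^*(E)|=0$. Then $\phi^*(E)$ is a countable union of compact null sets, each of which is nowhere dense, so $\phi^*(E)$ is meager. Yet it contains $F$, which is comeager in $W$, and this contradicts Baire. So no $\sigma$-compact $E$ can satisfy the theorem for this $F$, however Step 1 is carried out.

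\textbf{The same obstruction breaks Step 1.} Suppose $E'$ is a finite union of closed disks kept inside compact subsets of $D$. Then $\phi^*(E')$ is closed in the relevant strip, so it contains the closure of $\phi^*(Q)\cap U$. Taking $U$ open and dense of small measure then makes $|\phi^*(E')\setminus U|$ large. If you instead allow countably many disks, which a non-compact target like $\phi^*(Q)\cap U$ requires, then $\overline{E_n}$ is no longer compact. Its projections are also uncontrolled, because the closure of a countable union adds accumulation sets. Finally, "intersecting with the previous set" destroys the covering property, since $E'$ lies only in $\nbhd{Q}{\eta}$, not in $Q$.

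\textbf{The missing idea.} What you need is the tree structure of a system of disks (\Cref{definition: system-of-disks}). At stage $m$ there is a countable family $\cB_m$ of open disks. Each $B\in\cB_{m-1}$ is refined by disks of $\cB_m$ that lie inside $B$ and cover $\phi^*(B)\cap F_m$. This refinement is possible because the target $F_m\cap\phi^*(B)$ is open and $B$ is open, so \Cref{base} yields $E_{i,j}\subset B_i$. The resulting set $E=\bigcap_m\bigcup_{B\in\cB_m}\overline B$ is only $F_{\sigma\delta}$. Compactness is applied along a single branch $B_1\supset B_2\supset\cdots$ (\Cref{disks}). Closures are taken disk by disk, and \Cref{difference} shows each closure adds only a null set.

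\textbf{Two smaller problems in Step 1.}
\begin{enumerate}
\item Your covering is inconsistent. For fixed $x$, the curve $\alpha\mapsto(\alpha,\phi_\alpha(x))$ meets $A\times I$ only over part of $A$. So you cannot have $\phi_\alpha(G)\subset I$ for all $\alpha$ in an $\Acover$ that exhausts $A$; the $\alpha$-window must be short and tied to $G$. This is why the paper uses fibers $G\subset\phi_{\alpha_0}^{-1}(\beta_0)$ with the window $(\alpha_0+\eta,\alpha_0+2\eta)$ (\Cref{differenceset}).
\item The $\alpha$-range of a compact $Q$ need not be bounded; in Davies's example $D=\R\times X$. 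This is why the paper measures the excess only in strips $[-m,m]\times\R$.
\end{enumerate}
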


\begin{proof}[Proof of \Cref{thm:main v2} assuming \Cref{prescribedprojection}]
Let $\{F_\al\}_\al$ be as in the statement of \Cref{thm:main v2}.  By assumption, the set
\[
F:=\bigcup_{\al\in \R} \{\al\}\times F_\al
\]
is measurable.  Let $E$ be as given by Theorem \ref{prescribedprojection}.  For any $\al$, if $\be\in F_\al$ then $(\al,\be)\in F$, hence $(\al,\be)\in \phi^*(E)$ by (a).  By (\ref{stardef}) it follows that $\be\in \phi_\al(E)$, so $\phi_\al(E)\supset F_\al$.  Finally, by Fubini we have
\[
0=|\phi^*(E)\setminus F|=\int_{\R} |\phi_\al(E)\setminus F_\al|\,d\al,
\]
so $|\phi_\al(E)\setminus F_\al|=0$ for almost every $\al$.
\end{proof}
The goal of the remainder of this section is to prove \Cref{prescribedprojection}.

\subsection{Properties of $\phi^*$}
The purpose of this subsection is to prove some basic properties of the set function $\phi^*$ which will be needed in our construction.
\begin{lem}
\label{lem: openness of phi star}
Let $\phi$ be admissible, and define
\begin{equation}
\label{eq: define auxiliary map}
\begin{split}
\Phi:D&\to \R^2, \\
\Phi(\al,x)&=(\al,\phi_\al(x)).
\end{split}
\end{equation}
Then $\Phi$ is an open mapping.  In particular, if $U$ is open, then $\phi^*(U)$ is open.
\end{lem}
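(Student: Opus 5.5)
The plan is to show that $\Phi$ is a submersion, so its derivative is surjective at every point, and then invoke the standard fact that a $C^1$ submersion between open subsets of Euclidean spaces is an open map. Here $\Phi$ maps the open set $D \subset \R^3$ into $\R^2$. It is $C^1$ because $\phi$ is $C^1$. At a point $(\al,x)\in D$ its Jacobian is the $2\times 3$ matrix
\[
D\Phi(\al,x)=\begin{pmatrix} 1 & 0 & 0\\ \partial_\al\phi(\al,x) & \partial_{x_1}\phi(\al,x) & \partial_{x_2}\phi(\al,x)\end{pmatrix}.
\]
Non-stationarity gives $\nabla_x\phi(\al,x)\neq 0$, so some entry $\partial_{x_j}\phi(\al,x)$ is nonzero. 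The $2\times 2$ minor formed by the first column and the $x_j$-column then has determinant $\partial_{x_j}\phi(\al,x)\neq 0$, so $D\Phi$ has rank $2$ everywhere on $D$.

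Next I prove openness directly. Fix an open $W\subset D$ and a point $(\al_0,x_0)\in W$, and choose $j$ with $\partial_{x_j}\phi(\al_0,x_0)\ne0$. Freeze the other coordinate $x_{3-j}$ at its value in $x_0$ and consider the map $\Psi(\al,t)=\Phi(\al,x(t))$ of two variables, where $x(t)$ is $x_0$ with the $j$th coordinate replaced by $t$. The map $\Psi$ is defined on an open neighborhood of $(\al_0,(x_0)_j)$ whose image lies in $W$. It is $C^1$, and its Jacobian determinant at the base point is $\partial_{x_j}\phi(\al_0,x_0)\neq0$. The inverse function theorem then shows that $\Psi$ maps a small neighborhood onto an open neighborhood of $\Phi(\al_0,x_0)$. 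That neighborhood is contained in $\Phi(W)$, so $\Phi(W)$ is open.

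For the consequence, let $U\subset\R^2$ be open. By definition and the convention $\phi_\al(U)=\phi_\al(U\cap D_\al)$,
\[
\phi^*(U)=\bigcup_\al\{\al\}\times\phi_\al(U\cap D_\al)=\Phi\bigl(D\cap(\R\times U)\bigr).
\]
The set $D\cap(\R\times U)$ is open in $D$, so $\phi^*(U)$ is open by the first part.

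Nothing here is a real obstacle. The only point needing care is checking the identity $\phi^*(U)=\Phi(D\cap(\R\times U))$ against the domain convention. Neither angle monotonicity nor rectangle squishing is needed; only the $C^1$ and non-stationary hypotheses are used.
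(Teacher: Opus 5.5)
Your proposal is correct and follows the paper's argument: non-stationarity gives $D\Phi$ rank $2$ on $D$, so $\Phi$ is open, and then $\phi^*(U)=\Phi((\R\times U)\cap D)$ is open. The only difference is that you prove the openness of a submersion directly, via the inverse function theorem applied to a two-variable slice, whereas the paper simply cites the constant rank theorem for this step.
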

\begin{proof}
The derivative of $\Phi$ is the $2 \times 3$ matrix
\begin{align}
\label{eq:Dphi matrix v2}
D\Phi(\alpha,x)
=
\begin{pmatrix}
    1 & 0 & 0
    \\
    \partial_\alpha\phi_\alpha(x) & \partial_{x_1} \phi_\alpha(x) & \partial_{x_2} \phi_\alpha(x)
\end{pmatrix}
\end{align}
By assumption $\nabla \phi_\alpha(x)\neq 0$ for all $\alpha$ and all $x$, so $D\Phi$ has full rank $2$ everywhere. By the constant rank theorem, $\Phi$ is an open mapping.  The ``in particular'' statement follows from the observation that $\phi^*(U)=\Phi((\R\times U) \cap D)$ and that $(\R\times U) \cap D$ is open. 
\end{proof}

\begin{lem}
\label{difference}
Let $\phi$ be admissible.  If $E\subset \R^2$ is a connected Borel set and $\overline{E}$ is its closure, then $|\phi^*(\overline{E})\setminus \phi^*(E)|=0$.
\end{lem}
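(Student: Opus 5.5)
The plan is to reduce to a statement about each single projection $\phi_\al$ and then use Fubini, as in the proof of \Cref{thm:main v2} from \Cref{prescribedprojection}. By the definition of $\phi^*$ and Fubini,
\[
|\phi^*(\overline E)\setminus\phi^*(E)| = \int_\R |\phi_\al(\overline E\cap D_\al)\setminus \phi_\al(E\cap D_\al)|\,d\al,
\]
so it suffices to show that for every $\al\in\cA$ the exceptional set $S_\al:=\phi_\al(\overline E\cap D_\al)\setminus\phi_\al(E\cap D_\al)$ is Lebesgue null. I will aim to show it is countable. For the Fubini step I will need $\phi^*(\overline E)$ and $\phi^*(E)$ to be measurable. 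Writing $\phi^*(E)=\Phi((\R\times E)\cap D)$ with $\Phi$ from \Cref{lem: openness of phi star}, these are continuous images of Borel sets, hence analytic and thus Lebesgue measurable.

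Fix $\al$ and let $\beta\in S_\al$. Choose $y\in\overline E\cap D_\al$ with $\phi_\al(y)=\beta$. Since $D_\al$ is open, $y$ is a limit of points of $E\cap D_\al$. Since $\nabla\phi_\al(y)\neq0$, the implicit function theorem gives a flow-box chart around $y$. This is a neighborhood $Q\subset D_\al$ and a $C^1$ diffeomorphism $Q\to(a,b)\times(c,d)$ whose first coordinate is $t=\phi_\al$, so the fibers of $\phi_\al$ in $Q$ are the vertical segments $\{t=\text{const}\}$. I can choose these charts from a countable family: take rational centers and radii, and use the uniform size of flow boxes on compact subsets of $D_\al$. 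Then it suffices to fix one chart $Q$ and show that the set
\[
S_Q:=\{\beta\in(a,b): \text{the fiber } \{t=\beta\}\cap Q \text{ meets } \overline{E\cap Q}\text{ but } \beta\notin\phi_\al(E\cap D_\al)\}
\]
is countable.

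For $\beta\in S_Q$, the set $E$ avoids the entire fiber $\phi_\al^{-1}(\beta)\cap D_\al$. The proof of \Cref{geomlem phi} shows that the component of this fiber through $y$ is a properly embedded curve diffeomorphic to $\R$ that exits every compact subset of $D_\al$. In the chart, $E\cap Q$ lies in $\{t<\beta\}\cup\{t>\beta\}$ and accumulates on the segment $\{t=\beta\}$. I would split $S_Q$ into two parts. The first part consists of those $\beta$ for which $E\cap Q$ meets only one side of the segment near the accumulation point; these are one-sided boundary values of the projection $T=\phi_\al(E\cap Q)$. The second part consists of those $\beta$ for which $E\cap Q$ meets both sides. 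For the first part, I will inject each such $\beta$ into a distinct component of $(a,b)\setminus T$ adjacent to it, or into an endpoint of such a component. An open subset of $\R$ has only countably many components, so this part is countable.

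The main obstacle is the second part, and this is where connectedness of $E$ has to enter. The idea I would try first is the following. If $E\cap Q$ lies on both sides of the fiber through $\beta$ while missing that fiber, then the connected set $E$ must travel from one side to the other outside $Q$, going around the fiber curve. Since the fiber curve is a closed embedded copy of $\R$ in the convex set $D_\al$, I want to use a Jordan-type separation argument, in which the curve together with $\partial D_\al$ separates $D_\al$. This would show that the two sides of the fiber are joined only through $\R^2\setminus D_\al$. Two distinct such $\beta$'s would then correspond to nested or disjoint "detours" of $E$, and this gives a countable separation structure. If that argument does not work cleanly, my fallback is to cover the bad $\beta$ by intervals $\phi_\al(U)$ with $U$ ranging over components of $E$ intersected with small rational balls, and to show that only endpoints of countably many intervals can lie in $S_Q$. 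I expect that making this second step rigorous for an arbitrary connected Borel $E$, which need not be path-connected or locally connected, will require the most care.
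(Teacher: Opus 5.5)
\textbf{The gap.} Your plan aims to show that $S_\alpha$ is countable, or at least null, for \emph{every} $\alpha$. This is false once $E\cap D_\alpha$ is disconnected, which is exactly the case your ``second part'' is meant to handle.

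Take $\phi_\alpha(x)=x_1+\alpha x_2$ on $D=\{(\alpha,x): x_2<1/2\}$; this satisfies all hypotheses of \Cref{thm:main}. Take the comb $E=(\R\times\{1\})\cup(\mathbb{Q}\times[0,1])$, a connected $F_\sigma$ set with $\overline E=\R\times[0,1]$. At $\alpha=0$ you get $E\cap D_0=\mathbb{Q}\times[0,1/2)$ and $\overline E\cap D_0=\R\times[0,1/2)$, so $S_0=\R\setminus\mathbb{Q}$.

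Every irrational $\beta$ lands in your second part. $E$ lies on both sides of the vertical fiber $\{x_1=\beta\}$ and is joined only through the spine outside $D_0$. A single detour serves uncountably many fibers, so no injection into ``nested or disjoint detours'' can exist. Your fallback fails too: components of $E$ in small balls are vertical segments with one-point images $\{q\}$, and the bad $\beta$ are not endpoints of these.

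In this example only $\alpha=0$ is bad, so the lemma itself is not contradicted. But any proof for arbitrary connected Borel $E$ would have to use the integration over $\alpha$ (for instance through angle monotonicity), not a bound that holds slice by slice. Separately, your Jordan-separation step needs $D_\alpha$ to be simply connected, which admissibility alone does not give.

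\textbf{Comparison with the paper.} The paper's proof is the slice-by-slice argument in two lines. Because $D_\alpha$ is open, $\phi_\alpha(\overline E\cap D_\alpha)\subset\overline{\phi_\alpha(E\cap D_\alpha)}$. If $E\cap D_\alpha$ is connected, its image is an interval, so each slice of $\phi^*(\overline E)\setminus\phi^*(E)$ has at most two points. Measurability of analytic sets and Fubini then finish, as in your first step.

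The paper derives the interval property from connectedness of $E$, which tacitly requires $E\cap D_\alpha$ to be connected. You were right to flag this as the crux. When it does hold (e.g.\ for a disk $B$ when $D_\alpha$ is convex; the lemma is only applied, disk by disk, in \Cref{disks}), none of your flow-box machinery is needed.

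Without convexity, disks can still be handled slice by slice. A point of $\phi_\alpha(\partial B\cap D_\alpha)$ missed by $\phi_\alpha(B\cap D_\alpha)$ must be a critical value of $\phi_\alpha|_{\partial B\cap D_\alpha}$. Otherwise the fiber through it crosses $\partial B$ transversally into $B$. Sard's theorem then makes that set null.
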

\begin{proof}
We first observe that for each $\alpha$, the slice $\phi_\alpha(\overline{E})\setminus \phi_\alpha(E)$ contains at most two points. Indeed, since $\phi_\alpha$ is continuous, we have
\[
\phi_\alpha(\overline{E})\setminus \phi_\alpha(E) \subset \overline{\phi_\alpha(E)}\setminus \phi_\alpha(E).
\]
Since $E$ is connected and $\phi_\alpha$ is continuous, $\phi_\alpha(E)$ is an interval.  This proves the claim.  
By Fubini, the conclusion of the lemma will follow if we prove $\phi^*(E)$ is measurable.  Consider the map $\Phi$ defined in (\ref{eq: define auxiliary map}).  For any set $E$, we have $\phi^*(E)=\Phi((\R\times E)\cap D)$.  In particular, if $E$ is Borel, then $\phi^*(E)$ is the image of the Borel set $(\R\times E)\cap D$ under the continuous function $\Phi$.  Since the continuous image of a Borel set is always measurable, this completes the proof.
\end{proof}

\begin{dfn}
Let $\Ga\subset \R^2$ be a curve.  A \textbf{sub-curve} of $\Ga$ is a compact and connected subset $\Ga'\subset\Ga$ which contains more than one point.
\end{dfn}

For the next lemma, it may be helpful to note that if $G$ is a sub-curve of $\phi_{\alpha_0}^{-1}(\beta_0)$, then $G \subset D_{\alpha_0}$. Also, for any $I \subset \R$ and $E \subset \R^2$
\[
(I \times \R) \cap \phi^*(E) = \bigcup_{\alpha \in I} \{\alpha\} \times \phi_\alpha(E).
\]

\begin{lem}\label{differenceset}
Let $\phi$ be admissible, let $(\alpha_0, \beta_0) \in \mathbb{R}^2$, and let $G$ be a sub-curve of $\phi_{\alpha_0}^{-1}(\beta_0)$. Then we have the following.
\begin{enumerate}[label=(\alph*)]
\item $\phi^*(G)$ has nonempty interior. More precisely, letting $G^\circ$ denote the relative interior of the curve $G$, for any $x \in G^{\circ}$ we have
\[\phi^*(x) \setminus \{(\alpha_0, \beta_0)\} \subset \inter \phi^*(G).\]
In particular, a point $(\alpha, \beta) \in \phi^*(G)$ is an interior point whenever $\beta$ is an interior point of the interval $\phi_{\alpha}(G)$. 
\item If $\eta > 0$ is such that $(\alpha_0-\eta,\alpha_0+\eta) \times G \subset D$, then the boundary of $((\alpha_0 - \eta, \alpha_0 + \eta) \times \mathbb{R}) \cap \phi^*(G)$ has measure zero.
\item Let $K \subset D$ be a compact set. If $\eta > 0$ is such that $(\alpha_0-\eta,\alpha_0+\eta) \times G \subset K$, then 
\[
((\alpha_0-\eta,\alpha_0+\eta) \times \mathbb{R}) \cap \phi^{*}(G)
\subset
\R \times (\beta_0-C_K\eta,\beta_0+C_K\eta),
\]
where
$
C_K
=
\sup\left\{|\partial_\alpha \phi(\alpha,x)| : (\alpha,x) \in K
\right\}
.
$
\end{enumerate}
\end{lem}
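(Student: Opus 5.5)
We treat the three parts separately; (a) is the substantive one.

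For (a), fix $x \in G^\circ$ and $(\alpha,\beta) \in \phi^*(x)$ with $(\alpha,\beta) \neq (\alpha_0,\beta_0)$, so $x \in D_\alpha$ and $\beta = \phi_\alpha(x)$.

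\emph{Case $\alpha = \alpha_0$.} Then $\beta = \phi_{\alpha_0}(x) = \beta_0$, contradicting $(\alpha,\beta)\neq(\alpha_0,\beta_0)$. So $\alpha \neq \alpha_0$.

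\emph{Case $\alpha \neq \alpha_0$.} By angle monotonicity, $\theta_\alpha(x) \neq \theta_{\alpha_0}(x)$. Now $\theta_{\alpha_0}(x)$ is the tangent direction of $G$ at $x$, since $G$ lies in a level curve of $\phi_{\alpha_0}$. Hence $\nabla\phi_\alpha(x)$ is not orthogonal to the tangent of $G$ at $x$. So $t \mapsto \phi_\alpha(\gamma(t))$ has nonzero derivative at the parameter of $x$, where $\gamma$ is a $C^1$ parametrization of $G$ near $x$; $G$ is a $C^1$ curve by the implicit function theorem. Choose a short sub-arc $J \subset G^\circ$ around $x$ with endpoints $y_\pm$ satisfying $\phi_\alpha(y_-) < \beta < \phi_\alpha(y_+)$. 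By continuity of $\phi$ in $\alpha$, and openness of $D$ (which keeps $y_\pm \in D_{\alpha'}$ and $J\subset D_{\alpha'}$), the strict inequalities $\phi_{\alpha'}(y_-) < \beta' < \phi_{\alpha'}(y_+)$ persist for all $(\alpha',\beta')$ in a small neighborhood of $(\alpha,\beta)$. Since $J$ is connected, the intermediate value theorem gives $\beta' \in \phi_{\alpha'}(J) \subset \phi_{\alpha'}(G)$. Thus $(\alpha,\beta)$ is an interior point of $\phi^*(G)$.

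The ``in particular'' clause follows from the same argument. If $\beta$ is interior to the interval $\phi_\alpha(G)$, pick $y_\pm \in G$ with $\phi_\alpha(y_-) < \beta < \phi_\alpha(y_+)$. Apply continuity in $(\alpha',\beta')$ to the connected arc of $G$ between $y_-$ and $y_+$, which lies in $D_{\alpha'}$ for $\alpha'$ near $\alpha$ by compactness and openness of $D$. This arc is connected since $G$ is a compact sub-curve of an embedded line.

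For (b), work in the strip $S = (\alpha_0-\eta,\alpha_0+\eta)\times\R$. For $\alpha$ in this range, $G \subset D_\alpha$ and $\phi_\alpha(G)$ is a compact interval $[m(\alpha), M(\alpha)]$, with $m$ and $M$ continuous in $\alpha$ by uniform continuity of $\phi$ on the compact set $[\alpha_0-\eta',\alpha_0+\eta']\times G$. Within $S$, the set $S\cap\phi^*(G)$ is the region between the graphs of $m$ and $M$. By the ``in particular'' part of (a), every point with $m(\alpha) < \beta < M(\alpha)$ is interior. The boundary relative to the open strip is therefore contained in the union of the graphs of $m$ and $M$, which has measure zero by Fubini. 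The boundary in $\R^2$ additionally meets the two vertical lines $\alpha = \alpha_0 \pm \eta$, which are also null. Hence the boundary has measure zero.

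For (c), take $(\alpha,\beta)$ in the set with $\alpha$ in the interval, so $\beta = \phi_\alpha(y)$ for some $y \in G$. Since $\phi_{\alpha_0}(y) = \beta_0$ and the segment from $(\alpha_0,y)$ to $(\alpha,y)$ lies in $K$, the mean value theorem in $\alpha$ gives $|\beta-\beta_0| \le C_K|\alpha-\alpha_0| < C_K\eta$.

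The main obstacle is (a): identifying $\theta_{\alpha_0}(x)$ as the tangent of $G$ so that angle monotonicity forces transversality, and then carrying out the stability argument in $(\alpha',\beta')$ with care about domains.
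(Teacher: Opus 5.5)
Your proof is correct. In part (a) it takes a genuinely different route from the paper.

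Both arguments rest on the same fact: for $\alpha\neq\alpha_0$, angle monotonicity gives $\nabla\phi_{\alpha_0}(x)^\perp\cdot\nabla\phi_\alpha(x)\neq 0$, i.e.\ $G$ is transverse at $x$ to the fiber of $\phi_\alpha$.

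\begin{itemize}
\item \textbf{The paper's route.} It turns this into a rank condition. The map $\Phi(\alpha,x)=(\alpha,\phi_\alpha(x))$, restricted to the surface $(\R\times\Gamma)\cap D$ with $\Gamma=\phi_{\alpha_0}^{-1}(\beta_0)$, has derivative of rank $2$ wherever $\alpha\neq\alpha_0$. The constant rank (here inverse function) theorem then makes it a local diffeomorphism, so small neighborhoods of $(\alpha,x)$ in $\R\times G^\circ$ map onto open sets.
\item \textbf{Your route.} You use transversality only at the single point $(\alpha,x)$, to find $y_\pm$ on a short arc $J$ with $\phi_\alpha(y_-)<\beta<\phi_\alpha(y_+)$. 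You then finish with three elementary facts: continuity of $\alpha'\mapsto\phi(\alpha',y_\pm)$, a tube-lemma neighborhood $(\alpha-\rho,\alpha+\rho)\times J\subset D$, and the intermediate value theorem.
\end{itemize}

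Your version is more elementary, uses only pointwise transversality plus continuity, and makes the domain bookkeeping explicit. The paper's is shorter given the standard theorem and gives a local-diffeomorphism structure, which is not needed here.

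In (b) you supply a step the paper leaves implicit. Knowing that each slice $\phi_\alpha(G)$ has at most two boundary points does not by itself control the boundary of the planar set. Your observation that the endpoint functions $m$ and $M$ are continuous is what makes the Fubini argument work: within the strip the set is the relatively closed region $\{m\le\beta\le M\}$, whose interior contains $\{m<\beta<M\}$.

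Part (c) is the same as the paper's.

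One small remark on the ``in particular'' clause. Your argument there uses $G\subset D_\alpha$, which is implicit in calling $\phi_\alpha(G)$ an interval. Given that, connectedness of $G$ itself suffices for the intermediate value step, so you do not need to pass to the arc between $y_-$ and $y_+$.
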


\begin{proof}
\textbf{Proof of (a).} Define $\Phi$ as in (\ref{eq: define auxiliary map}), and recall the derivative is the $2 \times 3$ matrix given in (\ref{eq:Dphi matrix v2}).
Let $\Gamma = \phi_{\alpha_0}^{-1}(\beta_0)$ and for $x \in \Gamma$, let $T_x\Gamma$ denote the tangent space to $\Gamma$ at $x$. The derivative of the restriction  $\Phi|_{(\R \times \Gamma) \cap D} : (\R \times \Gamma) \cap D \to \R^2$ is the restriction of \eqref{eq:Dphi matrix v2} to the $2$-dimensional subspace $\R \times T_x \Gamma$. We claim that the angle monotonicity condition implies that
\begin{align}
\label{eq:deriv-full-rank}
\text{the derivative $D(\Phi|_{(\R \times \Gamma)\cap D})(\alpha,x)$ has full rank for all $(\alpha,x) \in ((\R\setminus\{\alpha_0\}) \times \Gamma) \cap D$.}
\end{align}
Indeed, note that $\R \times T_x\Gamma$ is spanned by $(1,0,0)$ and $(0, \nabla\phi_{\alpha_0}(x)^\perp)$, and the matrix \eqref{eq:Dphi matrix v2} maps these two vectors to $(1,\partial_\alpha\phi_\alpha(x))$ and $(0,\nabla\phi_{\alpha_0}(x)^\perp \cdot \nabla\phi_{\alpha}(x))$, respectively. These two vectors are linearly independent, since the angle monotonicity condition implies $\nabla\phi_{\alpha_0}(x)^\perp \cdot \nabla\phi_{\alpha}(x) \neq 0$. Thus, we have \eqref{eq:deriv-full-rank}.

Thus, by the constant rank theorem, for all $(\alpha,x) \in ((\R\setminus\{\alpha_0\}) \times \Gamma) \cap D$, if $\e$ is sufficiently small, then
\[
\Phi(B((\alpha,x),\e) \cap (\R \times \Gamma)) \text{ is open}.
\]
If $(\alpha,x) \in ((\R \setminus \{\alpha_0\}) \times G^\circ) \cap D$, then we can choose $\e$ small enough so that $B((\alpha,x),\e) \cap (\R \times \Gamma) \subset (\R \times G) \cap D$, which shows $\Phi(\alpha,x) \in \inter \Phi((\R \times G)\cap D) = \inter \phi^*(G)$.

\textbf{Proof of (b).} 
Suppose $\eta > 0$ is such that $(\alpha_0-\eta,\alpha_0+\eta) \times G \subset D$. If $\alpha \in (\alpha_0 - \eta, \alpha_0+\eta)$, then $G \subset D_{\alpha}$. Since $G$ is compact and connected, $\phi_{\alpha}(G)$ is a closed (and possibly degenerate) interval, so the boundary of $\phi_\alpha(G)$ consists of at most two points. Therefore, Fubini's theorem implies that the boundary of $((\alpha_0 - \eta, \alpha_0 + \eta) \times \mathbb{R}) \cap \phi^*(G)$ has Lebesgue measure zero. 

\textbf{Proof of (c).} 
Let $K \subset D$ be a compact set and suppose $\eta > 0$ is such that $(\alpha_0-\eta,\alpha_0+\eta) \times G \subset K$. For any $x \in G$ and any $\alpha' \in (\alpha_0-\eta,\alpha_0+\eta)$, 
\[
|\phi_{\alpha'}(x) - \beta_0| = |\phi_{\alpha'}(x) - \phi_{\alpha_0}(x)| \leq C_K|\alpha' - \alpha_0| < C_K\eta.
\]
Thus $\phi_{\alpha'}(G) \subset (\beta_0 - C_K\eta, \beta_0 + C_K\eta)$, which implies (c).
\end{proof}

\subsection{Systems of disks}

The mechanism we use to extend Theorem \ref{itervb} to a proof of Theorem \ref{prescribedprojection} is contained in the following definition.

\begin{dfn}
\label{definition: system-of-disks}
We call $\{\cB_m:m\in\N\}$ a \textbf{system of disks} corresponding to $\newA$ if the following conditions hold:
\begin{enumerate}[label=(\roman*)]
\item $\phi^*(\cup \cB_m)\supset \newA$,
\item $|(\phi^*(\cup \cB_m)\setminus \newA) \cap ([-m, m] \times \R)|\to 0$ as $m\to\infty$,
\item For any $B\in\cB_{m-1}$ and any $y\in \phi^*(B)\cap \newA$, there exists $B'\in\cB_m$ with $B'\subset B$ and $y\in\phi^*(B')$. 
\end{enumerate}
We say that $F$ \textbf{admits a system of disks} if such a collection exists.
\end{dfn}

The motivation for \Cref{definition: system-of-disks} is that it provides a mechanism to build a set $E$ that satisfies the conditions of Theorem \ref{prescribedprojection}, as we show here.

\begin{lem}
\label{disks}
Let $\newA\subset \R^2$ be measurable.  If $F$ admits a system of disks, then there exists a set $E$ such that $\phi^*(E)\supset \newA$ and $|\phi^*(E)\setminus \newA|=0$.
\end{lem}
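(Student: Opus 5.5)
We take $E$ to be the ``limsup'' set of the system:
\[
E=\bigcap_{m\in\N}\ \bigcup_{B\in\cB_m} \overline{B}\quad\text{or rather}\quad E=\bigcap_{m}\ \bigcup \cB_m ,
\]
and then verify the two conclusions separately. The upper bound is the easy one. For every $m$ we have $E\subset \cup\cB_m$, so $\phi^*(E)\subset\phi^*(\cup\cB_m)$. Hence, for every $n\le m$,
\[
|(\phi^*(E)\setminus F)\cap([-n,n]\times\R)|\le|(\phi^*(\cup\cB_m)\setminus F)\cap([-m,m]\times\R)|.
\]
By (ii) the right-hand side tends to $0$ as $m\to\infty$. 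So $(\phi^*(E)\setminus F)\cap([-n,n]\times\R)$ is contained in sets of arbitrarily small outer measure for each fixed $n$, and therefore has measure zero. Letting $n\to\infty$ gives $|\phi^*(E)\setminus F|=0$. Measurability is not an issue, since the set is null.

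The main point is the inclusion $\phi^*(E)\supset F$. We use (i) and (iii) to build nested chains. Fix $y=(\alpha,\beta)\in F$. By (i) there is some $B_1\in\cB_1$ with $y\in\phi^*(B_1)$ (we may start the index at $0$ or $1$ as convenient). Applying (iii) repeatedly gives disks $B_1\supset B_2\supset\cdots$ with $B_m\in\cB_m$ and $y\in\phi^*(B_m)$, that is, $\beta\in\phi_\alpha(B_m\cap D_\alpha)$. Now pick $x_m\in B_m\cap D_\alpha$ with $\phi_\alpha(x_m)=\beta$. The $x_m$ all lie in $B_1$, so a subsequence converges to some $x\in\bigcap_m\overline{B_m}$.

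The main obstacle is that $x$ must actually lie in $E$ and in $D_\alpha$, and must satisfy $\phi_\alpha(x)=\beta$. This needs compactness inside $D_\alpha$ and closed disks. To arrange it, I would replace the disks by their closures in the definition of $E$, setting $E=\bigcap_m\bigcup_{B\in\cB_m}\overline B$. I would also note that in the construction the disks can be taken closed with $\overline B\subset D_\alpha$ whenever they meet the relevant fiber. Failing that, I would use Lemma \ref{difference}: $|\phi^*(\overline B)\setminus\phi^*(B)|=0$ for each disk, and since each $\cB_m$ is countable, enlarging to closures changes $\phi^*(\cup\cB_m)$ only by a null set. Condition (ii), and hence the upper-bound argument above, therefore still applies.

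Given that, $x\in\bigcap_m\overline{B_m}\subset E$. If $x\in D_\alpha$, continuity of $\phi_\alpha$ gives $\phi_\alpha(x)=\beta$, so $y\in\phi^*(E)$. To guarantee $x\in D_\alpha$, I would use that the radii of disks in $\cB_m$ shrink and that the disks $B_m$ meet $D_\alpha$ at the points $x_m$. Alternatively, it suffices that the closed disks in the system lie inside the domains in which they are used, which I would build into the construction or prove as needed. If $E$ must be Borel, the closed-disk version is a $G_\delta$-type countable intersection of countable unions of closed sets, and so is Borel.
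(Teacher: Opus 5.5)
Your argument is the paper's proof. It uses the same set $E=\bigcap_m\bigcup_{B\in\cB_m}\overline B$, the same nested chain $B_1\supset B_2\supset\cdots$ built from (i) and (iii) with a compactness limit point $x$, and the same combination of \Cref{difference} with (ii) to control the excess. Your disk-by-disk use of \Cref{difference} (each disk is connected and $\cB_m$ is countable) is cleaner than the paper's. The paper applies the lemma to $\cup\cB_m$, which need not be connected and whose closure need not equal $\bigcup_{B\in\cB_m}\overline B$.

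The step left open is $x\in D_\alpha$. The paper just writes ``by continuity of $\phi_\alpha$'', and you are right that this needs an argument. Your first suggestion does not supply one: shrinking radii and $x_m\in B_m\cap D_\alpha$ cannot keep the limit off $\partial D_\alpha$, and the bare definition of a system of disks allows exactly this. Take $D=\R\times\R\times(0,\infty)$, $\phi_\alpha(x)=x_2-\alpha x_1$ (which is admissible), $F=\{(1,0)\}$, and $\cB_m=\{B((0,r_m),r_m)\}$ with $r_m=m^{-3}$. The disks are nested, and each meets the fiber $\phi_1^{-1}(0)=\{x_1=x_2>0\}$. Also $|\phi^*(B_m)\cap([-m,m]\times\R)|\leq 4m\sqrt{1+m^2}\,r_m\to0$, so (i)--(iii) hold. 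Yet $E=\{0\}$ misses every $D_\alpha$, so $\phi^*(E)=\emptyset$.

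Your second suggestion is the right fix, and the paper's construction supplies it. Every disk of $\cB_m$, $m\geq1$, is a disk of some $E_{i,j}\subset\nbhd{G}{\delta}$ from \Cref{base}, where $[\alpha_0-3\eta,\alpha_0+3\eta]\times\overline{\nbhd{G}{\delta_0}}\subset D$ and $\delta<\delta_0$. Choose $B_1$ as a disk of an $E_{i,j}$ with $y\in U_{i,j}$, so that $\alpha$ lies in the corresponding interval $I\subset[\alpha_0-3\eta,\alpha_0+3\eta]$. Then $\overline{B_1}\subset D_\alpha$. Since all later disks in the chain lie in $B_1$, you get $x\in\overline{B_1}\subset D_\alpha$. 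With this extra property added to the definition of a system of disks, or with the lemma proved only for the constructed system, your proof is complete.
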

\begin{proof}
 Let $\{\cB_m:m\in\N\}$ be a system of disks corresponding to $\newA$, and define
 \[
 \widetilde{E}_m:=\bigcup_{B\in\cB_m}\overline{B} \hspace{.25in}\text{and}\hspace{.25in}  E:=\bigcap_m \widetilde{E}_m.
 \]
First we show $\phi^*(E)\supset \newA$.  Let $y=(\alpha,\beta)\in \newA$; by \eqref{stardef}, it suffices to prove $\beta\in \phi_\alpha(E)$.  By condition (i) of \Cref{definition: system-of-disks}, there exists $B_1\in \mathcal{B}_1$ with $y\in \phi^*(B_1)$. By condition (iii) of \Cref{definition: system-of-disks}, there exists $B_2\in \mathcal{B}_2$ with $B_2\subset B_1$ and $y\in \phi^*(B_2)$.    Continuing in this way, we obtain a nested sequence of disks $B_1\supset B_2\supset B_3\supset\cdots$ such that $y\in \phi^*(B_m)$ for each $m$.  This means that for each $m$, there is an $x_m\in B_m$ such that $\phi_\alpha(x_m)=\beta$.  By compactness of the closed disks, the sequence $\{x_m\}$ has a limit point $x\in \bigcap_m \overline{B}_m \subset E$.  By continuity of $\phi_\alpha$, we have $\beta=\phi_\alpha(x)$, hence $y\in \phi^*(\{x\}) \subset \phi^*(E)$.

Now we show $|\phi^*(E)\setminus \newA|=0$. We observe for any fixed $m$ we have $\phi^*(E)\setminus \newA\subset \phi^*(\widetilde{E}_m)\setminus \newA$.  We then use the following decomposition: letting $E_m=\cup \mathcal{B}_m$,
\begin{align*}
&|(\phi^*(E)\setminus \newA) \cap ([-m,m] \times \R)|
\\
&\qquad\leq 
|(\phi^*(\widetilde{E}_m)\setminus \phi^*(E_m))|
+
|(\phi^*(E_m)\setminus \newA) \cap ([-m,m] \times \R)|
.
\end{align*}
The first term on the right is zero by \Cref{difference}, and the second term tends to zero as $m\to\infty$ by (ii) of \Cref{definition: system-of-disks}. This implies $|\phi^*(E)\setminus \newA|=0$.
\end{proof}

\Cref{prescribedprojection} (and hence \Cref{thm:main v2}) can now be reduced to the following lemma.

\begin{lem}
\label{lem:prescribedprojection with additional assumptions}
Every measurable set $\newA \subset \phi^*(\R^2)$ admits a system of disks.
\end{lem}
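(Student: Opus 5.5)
We will build the families $\cB_m$ inductively, and the organizing idea is to reduce from measurable $F$ to open sets and then to "basic" sets of the form $\phi^*(G)$. First, by inner/outer regularity we choose, for each $m$, an open set $U_m\supset F$ with $|(U_m\setminus F)\cap([-m,m]\times\R)|<2^{-m}$. We arrange the $U_m$ to be decreasing, replacing $U_m$ by $U_1\cap\cdots\cap U_m$. Replacing $U_m$ by $U_m\cap\phi^*(\R^2)$, which is open by \Cref{lem: openness of phi star}, we may also assume $U_m\subset\phi^*(\R^2)$.

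The key local claim we will prove is the following. Given a point $y=(\al_0,\be_0)\in F$, a disk $B$ with $y\in\phi^*(B)$, an open set $U\ni y$, and $\epsilon>0$, there is a finite union of disks $\cB(y)$, each contained in $B$, such that $\inter\phi^*(\cup\cB(y))$ contains a neighborhood of $y$ and $\phi^*(\cup\cB(y))\cap([-m,m]\times\R)$ lies in $U$ except for a set of measure at most $\epsilon$ times the measure of that neighborhood.

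To prove the claim, pick $x\in B$ with $\phi_{\al_0}(x)=\be_0$, and let $G$ be a short sub-curve of the level curve $\phi_{\al_0}^{-1}(\be_0)$ through $x$, lying in $B$, with $x\in G^\circ$. By \Cref{differenceset}(a), $\phi^*(G)$ has interior near $y$. Choose $\eta$ small and set $\Acover=[\al_0-\eta,\al_0+\eta]$. By \Cref{differenceset}(c), the region $W=((\al_0-\eta,\al_0+\eta)\times\R)\cap\phi^*(G)$ lies in a $C\eta$-box around $y$, hence in $U$ for small $\eta$. By \Cref{differenceset}(b), $\partial W$ is null. The part of $\phi^*(G)$ over $\al$ outside $\Acover$ is the problem. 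To handle it, let $\Asmall=[-m,m]\setminus(\al_0-2\eta,\al_0+2\eta)$, which is compact and disjoint from $\Acover$, and apply \Cref{itervb} with tiny $\epsilon'$ to get disks $G'\subset B$ near $G$. Then for $\al\in\Acover$ we have $\phi_\al(G')\supset\phi_\al(G)$, so the neighborhood of $y$ is covered. For $\al\in\Asmall$ the slices have measure less than $\epsilon'$, so the total error is at most $2m\epsilon'$. The transition strip $\eta\le|\al-\al_0|\le2\eta$ is handled by continuity: since $G'\subset\nbhd{G}{\epsilon'}$, those slices stay near $\phi_\al(G)$, which lies within $O(\eta)$ of $\be_0$, so they contribute measure $O(\eta^2)$. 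That is too large compared with the box area $\eta\cdot|\phi_\al(G)|$ unless $G$ is chosen with length comparable to $\eta$ or larger; equivalently, we take the box $W$ with width $\eta$ and height about $\eta|G|$ and let $\Acover$ occupy all but a $\tau$ fraction of the width. I expect this balancing of the transition region to be the main obstacle, and the fix I would try first is to shrink the gap between $\Acover$ and $\Asmall$ to $\tau\eta$.

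With the claim in hand, we carry out the induction. For each $B\in\cB_{m-1}$, the set $\phi^*(B)\cap F$ is covered by the interiors produced by the claim with $U=U_m$. We extract a countable subcover, which is permitted since we may allow countably many disks, and a Vitali/Besicovitch-type selection over these basic neighborhoods keeps the total error $\lesssim\epsilon\,|U_m\cap([-m,m]\times\R)|$ plus $|(U_m\setminus F)\cap([-m,m]\times\R)|$. Taking $\epsilon=2^{-m}/(1+|U_1\cap[-m,m]\times\R|)$ gives (ii). Condition (iii) holds by construction, since the new disks lie in $B$. For (i) with $m=1$, we cover each point of $F$ by using $B$ equal to a disk containing a preimage point. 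The remaining detail is boundary points of the neighborhoods: we handle them by using that the selected neighborhoods cover $\phi^*(B)\cap F$ outright because they are open, so no null-set exceptions arise for (i) and (iii).
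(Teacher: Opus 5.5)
There is a genuine gap, and it is not the transition strip. Measured against $U$, as in your claim, that strip costs nothing. By \Cref{differenceset}(c) and $G'\subset\nbhd{G}{\epsilon'}$, the part of $\phi^*(G')$ over $|\alpha-\alpha_0|<2\eta$ lies in a small box about $y$, hence in $U$ for small $\eta,\epsilon'$. Whatever lands in $U_m\setminus F$ is bounded by $|(U_m\setminus F)\cap([-m,m]\times\R)|$ no matter how the pieces overlap. So each piece's error outside $U$ is just the absolute amount $2m\epsilon'$ coming from $\Asmall$.

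The real problem is that your covering sets depend on $\epsilon'$. The point $y=(\alpha_0,\beta_0)$ is the apex of the bowtie $\phi^*(G)$, because $\phi_{\alpha_0}(G)=\{\beta_0\}$; this is why \Cref{differenceset}(a) excludes $(\alpha_0,\beta_0)$. Moreover $\phi_{\alpha_0}(G')\subset(\beta_0-C\epsilon',\beta_0+C\epsilon')$, with $C$ a bound for $\|\nabla\phi_{\alpha_0}\|$ near $G$. Hence every disk about $y$ inside $\phi^*(G')$ has radius $O(\epsilon')$, while the part of $\phi^*(G')$ that survives as $\epsilon'\to0$, namely $W$, is not a neighborhood of $y$. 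Consequently you cannot first extract a countable subcover and then choose summable precisions $\epsilon'_k$. Nor can you get ``$\epsilon$ times the measure of that neighborhood'' for centered disks: such a disk has area $O(\epsilon'^2)$, while the only available error bound is $2m\epsilon'$.

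The Vitali/Besicovitch step does not repair this, for four reasons. Besicovitch needs balls centered at the points, which reintroduces exactly this circularity. A Vitali selection covers only up to a null set, whereas (i) and (iii) must hold at every point of $F$. Your per-$B$ bounds must still be summed over the countably many $B\in\cB_{m-1}$, whose images $\phi^*(B)$ overlap heavily, and nothing controls $\sum_{B}|\phi^*(B)\cap U_m\cap([-m,m]\times\R)|$. Finally, $|U_1\cap([-m,m]\times\R)|$ may be infinite (e.g.\ $F=\phi^*(\R^2)=\R^2$ in \Cref{davies}), which makes your choice of $\epsilon$ vacuous.

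What is missing is the topological base of \Cref{base}: covering sets that are fixed \emph{before} the precision is chosen and that contain the target point in their interior. To cover $z=(\alpha,\beta)$, the paper picks $x\in B$ with $\phi_\alpha(x)=\beta$ and places the apex $y=(\alpha_0,\beta_0)$ away from $z$, at $\alpha_0=\alpha-\frac32\eta$, $\beta_0=\phi_{\alpha_0}(x)$. For a sub-curve $G\ni x$ of $\phi_{\alpha_0}^{-1}(\beta_0)$, the set $U_{y,G,\eta}=\inter[((\alpha_0+\eta,\alpha_0+2\eta)\times\R)\cap\phi^*(G)]$ contains $z$ by \Cref{differenceset}(a), since $\alpha\neq\alpha_0$. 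It is small by \Cref{differenceset}(c).

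For such a fixed set, apply \Cref{itervb} with $\Acover=\overline I$ and $\Asmall=[-N,N]\setminus I'$, where $I=(\alpha_0+\eta,\alpha_0+2\eta)$ and $I'$ is its $\delta$-neighborhood. This yields $E\subset B$ with $|(\phi^*(E)\setminus U_{y,G,\eta})\cap([-N,N]\times\R)|\le(10C_0\eta+2N)\delta$, where $C_0$ is independent of $\delta$. That is an arbitrarily small absolute error measured against the basic set itself. Your instinct to shrink the gap between $\Acover$ and $\Asmall$ is exactly what makes this estimate work, with gap $\delta$ rather than $\tau\eta$.

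One then takes a countable subcover $\{U_{i,j}\}$ of $U_m\cap\phi^*(B_i)$ for each $B_i\in\cB_{m-1}$. Only afterwards does one choose precision $2^{-(m+i+j)}$ for $U_{i,j}$, so the errors sum to $2^{-m}$ with no covering lemma and no finiteness of measure needed. Alternatively, replacing your level-curve arc by a short segment through $x$ transverse to the level set would make $y$ itself an interior point of $\phi^*(G)$, with a precision-independent neighborhood, and would fix your scheme in the same way.
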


\subsection{Constructing a topological base}
Our approach to proving \Cref{lem:prescribedprojection with additional assumptions} is modeled after the one introduced by Falconer \cite{Falconer86} and extended by  
Chang, McDonald and Taylor \cite{CMT23}.  We want to show that for any pair $(B,V)$, there exists a topological base for $V$ consisting of sets which satisfy an approximate version of \Cref{prescribedprojection} (approximate in the sense that measure zero error is replaced by measure $\e$ error).  Our main ingredient in constructing such a base is \Cref{itervb}.  

To get a feel for how this will work, consider again the example $\phi_\al(x)=x_2+\sqrt{1-(\al-x_1)^2}$ with $D_\al=[\al-1/\sqrt{2},\al+1/\sqrt{2}]$, depicted in \Cref{fig: phistar}.  The set $E$ depicted on the left in that figure happens to be a single fiber of the projection, namely $\phi_0^{-1}(0)$.  The set $\phi^*(\phi_0^{-1}(0))$ shown in gray on the right of \Cref{fig: phistar} has significant interior.  More precisely, while $(0,0)$ is not itself an interior point, we may intersect with a vertical strip $(0,\eta)\times \R$ for $0<\eta\ll 1$ and remove the upper and lower boundary, we get an open set.  By continuity, we can ensure the diameter of this set is small by taking $\eta$ small enough.  If we do the same thing with a set of the form $\phi^*(\phi_\al^{-1}(\be))$ for arbitrary $(\al,\be)$ in place of $(0,0)$, we get a translate of the previous set.  Since this process yields sets which can be translated and made arbitrarily small, we can build a topological base this way.  Moreover, since our building blocks are the types of sets that can be prescribed as projections by \Cref{itervb}, the base constructed this way will have desirable properties.

\begin{lem}[Topological base lemma]
\label{base}
Let $(B,V)$ be a pair of open sets with $\phi^*(B)\supset V$.  There is a base $\mathcal{U}$ for the topology of $V$ such that for any $U\in\mathcal{U}$, $N > 0$, and $\e>0$, there exists a set $E\subset B$ which satisfies 
\begin{enumerate}[label=(\alph*)]
\item $E$ is a finite union of open disks,
\item \label{item:baselemma cover} $U\subset \phi^*(E)$,
\item \label{item:baselemma small} $|(\phi^*(E)\setminus U) \cap ([-N, N] \times \mathbb{R})| < \e$.
\end{enumerate}
\end{lem}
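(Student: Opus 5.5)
We take $\mathcal{U}$ to consist of the open sets
\[
U = \inter\bigl(((\alpha_0-\eta,\alpha_0+\eta)\times\R)\cap\phi^*(G)\bigr),
\]
where $(\alpha_0,\beta_0)$ ranges over $V$, $G$ ranges over sub-curves of $\phi_{\alpha_0}^{-1}(\beta_0)$ containing a point $x_0\in B$ with $\phi_{\alpha_0}(x_0)=\beta_0$, and $\eta>0$ is small. The plan has three parts: check that these sets form a base, choose the approximating set $E$ by the iterated Venetian blinds theorem, and estimate the measure of the error.

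\emph{These sets form a base.} Fix $(\alpha_0,\beta_0)\in V$ and an open neighborhood $W\subset V$ of it. Since $\phi^*(B)\supset V$, there is $x_0\in B\cap D_{\alpha_0}$ with $\phi_{\alpha_0}(x_0)=\beta_0$. The level set through $x_0$ is a $C^1$ curve, and we take $G$ to be a short sub-curve of it inside $B$ with $x_0$ in its relative interior. Next we pick $\eta$ small enough that $[\alpha_0-\eta,\alpha_0+\eta]\times\overline{\nbhd{G}{r}}\subset D$ for some $r>0$; this is possible because $D$ is open and $G$ is compact. By \Cref{differenceset}(c), the set $U$ then lies in $(\alpha_0-\eta,\alpha_0+\eta)\times(\beta_0-C\eta,\beta_0+C\eta)$, so $U\subset W$ once $\eta$ is small. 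It remains to see that $(\alpha_0,\beta_0)$ lies in the closure of $U$, or better in $U$ itself. By \Cref{differenceset}(a), every point of $\phi^*(x_0)$ other than $(\alpha_0,\beta_0)$ is interior to $\phi^*(G)$, so $(\alpha_0,\beta_0)$ is only a limit of points of $U$. To obtain membership we use the freedom in choosing base elements. We centre the construction at a nearby point $(\alpha_0',\beta_0')$ chosen so that $(\alpha_0,\beta_0)\in U$. For instance, $\alpha_0'=\alpha_0-\eta/2$ with a curve $G'$ through $x_0$ on the level set of $\phi_{\alpha_0'}$; then $(\alpha_0,\beta_0)=\Phi(\alpha_0,x_0)$ with $\alpha_0\neq\alpha_0'$, and \Cref{differenceset}(a) shows it is an interior point. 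Since $U\subset V$ and $U$ is open, these sets form a base.

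\emph{Choice of $E$.} Given $U$ built from $(\alpha_0,\beta_0,G,\eta)$, together with $N$ and $\epsilon$, we apply \Cref{itervb} with
\[
\Acover=[\alpha_0-\eta,\alpha_0+\eta],\qquad \Asmall=\bigl([-N,N]\setminus(\alpha_0-\eta-\tau,\alpha_0+\eta+\tau)\bigr)\cap\{\alpha:\phi^*\text{ relevant}\},
\]
with error parameter $\epsilon'$, and with $\tau>0$ small. The hypothesis $G\subset\bigcap_{\alpha\in\Acover}D_\alpha$ holds by the choice of $\eta$. \Cref{itervb} produces a finite union of disks $E\subset\nbhd{G}{\epsilon'}$; taking $\epsilon'$ small also gives $E\subset B$ and keeps $E$ inside the region where $D$ contains $\Acover\times\overline{E}$. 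Property (b) of \Cref{itervb} gives $\phi_\alpha(E)\supset\phi_\alpha(G)$ for all $\alpha\in\Acover$, hence $U\subset\phi^*(G)\cap(\Acover\times\R)\subset\phi^*(E)$.

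\emph{Error estimate.} We split the error $(\phi^*(E)\setminus U)\cap([-N,N]\times\R)$ into three pieces.
\begin{enumerate}
\item Over $\alpha\in\Asmall$, the error has measure at most $2N\epsilon'$ by property (c) of \Cref{itervb}.
\item Over the two strips of width $\tau$, the error is contained in a bounded set of width $2\tau$, since $E$ is bounded and $\phi$ is continuous on a compact set; its measure is $O(\tau)$.
\item Over $\alpha\in\Acover$, the error lies in $\phi^*(E)\setminus\phi^*(G)$ together with the boundary of $\phi^*(G)\cap(\Acover\times\R)$. The boundary has measure zero by \Cref{differenceset}(b). For the first part, $\phi_\alpha(E)\subset\nbhd{\phi_\alpha(G)}{M\epsilon'}$ with $M$ a bound on $\nabla\phi$, so this part has measure $O(\eta\epsilon')$.
\end{enumerate}
Choosing $\tau$ and then $\epsilon'$ small enough makes the total less than $\epsilon$.

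The main obstacle is the bookkeeping of domains in the second step: one has to make sure that $\phi_\alpha(E)$ stays controlled for $\alpha$ near $\Acover$, where $E$ may leave $D_\alpha$, and that the convention $\phi_\alpha(E)=\phi_\alpha(E\cap D_\alpha)$ causes no trouble. This is handled by the $\tau$-strip, where only boundedness of $\phi$ on the compact set $\overline{\nbhd{G}{1}}$ together with the relevant $\alpha$-range is needed.
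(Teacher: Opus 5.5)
Your proposal is correct and follows essentially the same route as the paper. Base elements are interiors of $\phi^*(G)$ cut by a vertical strip, and the target point is placed inside one by shifting the base parameter so that \Cref{differenceset}(a) applies. Then $E$ comes from \Cref{itervb} with $\Acover$ the closed strip and $\Asmall$ the rest of $[-N,N]$ minus a thin buffer, followed by the same three-piece error estimate. The differences are cosmetic: the paper builds the offset into the strip $(\alpha_0+\eta,\alpha_0+2\eta)$ and reuses $\delta$ as the buffer width. You should still state explicitly in the definition of $\mathcal{U}$ that $G\subset B$, that $\eta$ satisfies the domain condition, and that $U\subset V$, and drop the undefined ``$\phi^*$ relevant'' restriction on $\Asmall$.
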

\begin{proof}
If $y=(\alpha_0,\beta_0)\in \R^2$, $G$ is a sub-curve of $\phi_{\alpha_0}^{-1}(\beta_0)$, and $\eta > 0$, we define
\[
U_{y,G,\eta}=\operatorname{int}\,[((\alpha_0+\eta,\alpha_0+2\eta)\times \R)\cap \phi^*(G)].
\] 
Let $\mathcal{U}$ be the set of all $U_{y,G,\eta}$ with parameters satisfying: 
\begin{enumerate}
    \item \label{item:baselemma sub-curve} $G$ is a sub-curve of $\phi_{\alpha_0}^{-1}(\be_0)$, $G\subset B$,
    \item \label{item:baselemma eta small} $0 < \eta < \frac{1}{4} \eta_0(G)$, where $\eta_0(G) := \sup\{\eta' \geq 0: (\alpha_0-\eta', \alpha_0+\eta') \times G \subset D\}$,
    \item \label{item:baselemma U in V} $U_{y,G,\eta}\subset V$.
\end{enumerate}
We first show that $\mathcal{U}$ is a topological base for $V$.  Let $z=(\al,\be)\in V$. Let $V'$ be an open set such that $z \in V' \subset V$.  We must show that there exist $y,G,\eta$ as above such that $z\in U_{y,G,\eta} \subset V'$.  Since $V \subset \phi^*(B)$, there exists some $x\in B \cap \phi_\alpha^{-1}(\beta)$.  Let $B'\subset B\cap D_\al$ be a compact ball centered at $x$, let $I$ be a compact neighborhood of $\al$ with $B'\subset D_{\al'}$ for every $\al'\in I$, let $K=I\times B'$, and let
\[
C_K
=
\sup\left\{|\partial_\alpha \phi(\alpha',x')| : (\alpha',x') \in K
\right\}
.
\]
We may choose $\eta$ sufficiently small so that
\begin{equation}
\label{eq: eta small 1}
B((\alpha, x), 10 \eta) \subset D \cap (\mathbb{R} \times B),
\end{equation}
and so that
\begin{equation}
\label{eq: eta small 2}
(\alpha-2\eta,\alpha+2\eta) \times (\be-4C_K\eta,\be+4C_K\eta)\subset V',
\end{equation}
Indeed, both (\ref{eq: eta small 1}) and (\ref{eq: eta small 2}) are possible since $B,V',D$ are all open sets.  Having chosen $\eta$, it remains to choose $y$ and $G$, and to prove that the resulting set contains $z$ and is in $\mathcal{U}$.  Let $\alpha_0 = \alpha - \frac{3}{2} \eta$, let $\beta_0 = \phi_{\alpha_0}(x)$, and let $y=(\al_0,\be_0)$.  Finally, let $G$ be a sub-curve of $\phi_{\alpha_0}^{-1}(\beta_0)$ contained in $B(x, \eta)$ with $x \in G^\circ$.  Since $z \in \phi^*(x)$ and $x \in G^\circ$, \Cref{differenceset} implies that $z$ is an interior point of $\phi^*(G)$.  Since $z \in \{\alpha\} \times \R \subset (\alpha_0+\eta,\alpha_0+2\eta) \times \R$, we have $z\in U_{y,G,\eta}$.  To show $U_{y,G,\eta}\in\mathcal{U}$, we first observe that \eqref{item:baselemma sub-curve} holds by construction.  Next, we note that by \eqref{eq: eta small 1}, we have $(\alpha_0-\frac{9}{2}\eta, \alpha_0 + \frac{9}{2}\eta) \times G \subset D$.  Therefore, $\eta_0(G) \geq \frac{9}{2} \eta > 4 \eta$, so $\eta < \frac{1}{4} \eta_0(G)$, proving \eqref{item:baselemma eta small}.  Finally, by definition of $C_K$, we have
\[
|\be-\be_0|=|\phi_\al(x)-\phi_{\al_0}(x)|\leq C_K |\alpha - \alpha_0| = C_K \cdot \frac{3}{2}\eta.
\]
This, together with (\ref{eq: eta small 2}) and \Cref{differenceset}(c) (with $2\eta$ in place of $\eta$) implies
\[
U_{y,G,\eta}\subset (\al_0+\eta,\al_0+2\eta)\times (\be_0-2C_K\eta,\be_0+2C_K\eta)\subset V',
\]
so \eqref{item:baselemma U in V} is satisfied. This completes the proof that $\mathcal{U}$ is a base; it remains to show that $\mathcal{U}$ satisfies properties (a)--(c).

Fix  $U=U_{y,G,\eta}\in\mathcal{U}$, $N > 0$, and $\e >0$. Now we need to construct a set $E \subset B$ satisfying (a), (b), (c). Let $\alpha_0$ be the first coordinate of $y$. Since $4\eta < \eta_0(G)$, it follows that 
\begin{align}
\label{eq:G subset B subset D}
(\alpha_0-4\eta, \alpha_0+4\eta) \times G \subset D.
\end{align}
Let $\delta_0>0$ be such that 
\begin{align}
[\alpha_0 -3\eta, \alpha_0+3\eta] \times \overline{\nbhd{G}{\delta_0}} \subset D
\end{align}
and
\begin{align}
\label{eq:nbhd G in B}
\nbhd{G}{\delta_0} \subset B
\end{align}

Let $\delta \in (0, \min(\eta, \delta_0))$ be a small constant to be chosen later. (At the end of the proof, we will see that we can take $\delta = \frac{1}{2} \min(\eta,\delta_0,\frac{\epsilon}{10 C_0\eta + 2N})$.) Let 
\begin{align*}
    I&=(\alpha_0+\eta,\alpha_0+2\eta)
    \\
    I'&=(\alpha_0+\eta-\de,\alpha_0+2\eta+\de)
\end{align*}
(so that $I'$ is the $\de$-neighborhood of $I$). Because of \eqref{eq:G subset B subset D}, we can apply \Cref{itervb} with $\Acover:=\overline{I}$ and $\Asmall:= [-N,N]\setminus I'$ and $\delta$. This gives us a set $E\subset \nbhd{G}{\de}$ which is a finite union of balls satisfying 
\begin{align}
    |\phi_\alpha(E)|\leq \delta \qquad &\text{for } \alpha\in [-N,N]\setminus I'
    \\
    \phi_\alpha(E)\supset \phi_\alpha(G) \qquad &\text{for } \alpha\in \overline I.
\end{align}

By \eqref{eq:nbhd G in B}, $E \subset \nbhd{G}{\de} \subset \nbhd{G}{\delta_0} \subset B$. By construction, (a) is satisfied. To prove (b), let $z=(\alpha,\beta)\in U = \operatorname{int}((I\times \R)\cap \phi^*(G))$. It follows that $\alpha\in I\subset \Acover$, so $\beta \in\phi_\alpha(G) \subset \phi_\alpha(E)$. This proves $U \subset \phi^*(E)$.

Now we show that if $\delta$ is sufficiently small, then $E$ satisfies \ref{item:baselemma small}. We use $U \subset I \times \R$ and split $[-N,N]$ into three parts to obtain
\begin{equation}\label{eq:Phi E minus U}
\begin{split}
|(\phi^*(E)\setminus U) \cap ([-N, N] \times \mathbb{R})|
&= 
 |(\phi^*(E)\setminus U) \cap (I \times \R)| \\
&\qquad +
 |\phi^*(E) \cap ((I'\setminus I) \times \R)| \\
&\qquad +
|\phi^*(E) \cap (([-N,N]\setminus I')\times \R)|
\end{split}\end{equation}
We now bound each of these three terms separately. 

We first make a claim that will be useful for the first two terms of \eqref{eq:Phi E minus U}. Define
\[
C_0 = \sup \{ \|\nabla \phi(\alpha,x)\| : (\alpha,x) \in [\alpha_0-3\eta,\alpha_0+3\eta] \times \overline{\nbhd{G}{\delta_0}} \}.
\]
We claim that 
\begin{equation}
\label{eq:phi nbhd G minus phi G}
|\phi_\alpha(\nbhd{G}{\delta}) \setminus \phi_\alpha(G)| \leq 2C_0\delta \qquad\text{for all } \alpha \in [\alpha_0-3\eta, \alpha_0+3\eta].
\end{equation}
Indeed, suppose $x\in \nbhd{G}{\de}$, and let $x'\in G$ be such that $|x-x'|< \de$. Since $\delta < \delta_0$, the mean value theorem implies
$
|\phi_\alpha(x)-\phi_{\alpha}(x')| \leq C_0|x-x'| < C_0\de
$,
so $\phi_\alpha(x) \in \nbhd{\phi_\alpha(G)}{C_0\de}$. This proves that
\[
\phi_\al(\nbhd{G}{\de})\subset \nbhd{\phi_\al(G)}{C_0\de}.
\]
Next, since $G$ is connected and compact and $\phi_\al$ is continuous, $\phi_\al(G)$ is an interval of finite length, and the $C_0\de$-neighborhood of $\phi_\al(G)$ is an interval which has been extended by $C_0\de$ on each end. This proves the claim \eqref{eq:phi nbhd G minus phi G}.

For the first term in \eqref{eq:Phi E minus U}, we use the fact that $E\subset \nbhd{G}{\delta}$, the fact that $U \subset (I \times \R) \cap \phi^*(G) \subset \overline U$, and the fact that $|U| = |\overline U|$ (which follows from \Cref{differenceset}(b) and the fact that $U \subset (\alpha_0 + \eta, \alpha_0 + 2 \eta) \times \mathbb{R}$ and \eqref{eq:G subset B subset D})
to obtain
\begin{equation}
\label{first term first bound}
|(\phi^*(E)\setminus U) \cap (I \times \R)|\leq|(\phi^*(\nbhd{G}{\de})\setminus \phi^*(G)) \cap (I \times \R)|.
\end{equation}

Therefore, \eqref{eq:phi nbhd G minus phi G} and \eqref{first term first bound} together imply
\begin{equation}
\label{first term third bound}
|(\phi^*(E)\setminus U) \cap (I \times \R)|
\leq \int_{I}|\phi_\alpha(\nbhd{G}{\delta}) \setminus \phi_\alpha(G)| \, d\al
\leq 
2C_0\de|I|=2C_0\eta\de.
\end{equation}

To bound the second term in \eqref{eq:Phi E minus U}, we observe that if $\alpha \in I'\setminus I \subset [\alpha_0-3\eta, \alpha_0+3\eta]$, then by \eqref{eq:phi nbhd G minus phi G} and \Cref{differenceset}(c),
\[
|\phi_\alpha(\nbhd{G}{\delta})|
\leq
2C_0\delta + |\phi_\alpha(G)|
\leq 
2C_0(\delta+\eta) 
\leq 
4C_0\eta.
\] 
Hence, by Fubini,
\begin{align*}
|\phi^*(E) \cap ((I'\setminus I) \times \R)|
\leq
|\phi^*(\nbhd{G}{\delta}) \cap ((I'\setminus I) \times \R)|
\leq
4C_0\eta|I'\setminus I|
=
8C_0\eta\de.
\end{align*}

To bound the third term in \eqref{eq:Phi E minus U}, we observe that if $\alpha\in [-N,N]\setminus I'$ then $|\phi_\alpha(E)|<\de$. Hence, by Fubini, \begin{align*}
|\phi^*(E) \cap (([-N,N]\setminus I') \times \R) | \le 2N\de. 
\end{align*}

Combining the three estimates above, we have shown that if $\delta < \min(\eta,\delta_0)$, then the corresponding set $E$ satisfies
\[
|(\phi^*(E) \setminus U) \cap ([-N, N] \times \mathbb{R})| \leq (2C_0\eta + 8C_0\eta + 2N)\delta.
\]
By choosing $\delta$ sufficiently small, this last expression is less than $\epsilon$, which proves (c).
\end{proof}

\subsection{Proof of Main Theorem}
Recall that we have already reduced the proof of our main theorem (Theorem \ref{thm:main v2}) to \Cref{lem:prescribedprojection with additional assumptions}.  We conclude the section by proving \Cref{lem:prescribedprojection with additional assumptions}.

\begin{proof}[Proof of \Cref{lem:prescribedprojection with additional assumptions}]
Let $\newA \subset \phi^*(\R^2)$ be a measurable set. By outer regularity and the fact that $\phi^*(\R^2)$ is open (\Cref{differenceset}), there exists a nested sequence of open sets $\newA_0 \supset \newA_1 \supset \cdots$ such that 
\begin{gather}
\label{eq:F in Fm in V0}
\newA \subset \newA_m \subset \phi^*(\R^2) \qquad\text{for all $m$} 
\\
\label{eq:diff to 0}
|\newA_m\setminus \newA|\to 0 \qquad\text{as } m\to\infty
\end{gather}  
For each $m \in \{0,1,2,\dots\}$, we construct a countable family of disks $\cB_m$ which satisfies the following:
\begin{enumerate}
    \item\label{item:B_m cover} $\phi^*(\cup \cB_m)\supset \newA_m$,
    \item\label{item:B_m efficient} $|(\phi^*(\cup\cB_m)\setminus \newA_m) \cap ([-m,m] \times \R)|\leq 2^{-m}$
    \item\label{item:nested B_m} For any $B\in \cB_{m-1}$, if $y\in\phi^*(B)\cap \newA_m$ then there exists $B'\in \cB_m$ with $B'\subset B$ and $y\in\phi^*(B')$.
\end{enumerate}
For convenience, we write $S_m$ for the vertical strip $[-m, m] \times \mathbb{R}$.

Since \eqref{eq:diff to 0} holds, condition \eqref{item:B_m efficient} implies $|(\phi^*(\cup\cB_m)\setminus F) \cap S_m|\to 0$ as $m\to\infty$.  Therefore, $\{\cB_m\}_m$ so constructed must satisfy the properties of  \Cref{definition: system-of-disks}, and hence completing this construction will complete the proof. The construction of $\{\cB_m\}_m$ is recursive.  

For the initial step, let $\cB_0$ be a countable collection of disks whose union is $\R^2$. Then \eqref{item:B_m cover} follows from \eqref{eq:F in Fm in V0}, \eqref{item:B_m efficient} is trivial since $|S_0| = 0$, and \eqref{item:nested B_m} is vacuously true.

Next, fix $m \in \N$ and suppose $\cB_0,\dots,\cB_{m-1}$ are countable families of open disks which satisfy properties \eqref{item:B_m cover}--\eqref{item:nested B_m}, and define $\cB_m$ as follows. Let $\{B_i\}_{i\in\N}$ be an enumeration of the disks in $\mathcal{B}_{m-1}$, and let $V_i=F_m\cap \phi^*(B_i)$. Since $V_i$ is open and $V_i \subset \phi^*(B_i)$, we can apply \Cref{base} to get a topological base $\mathcal{U}_i$ for $V_i$ satisfying the conclusion of that lemma.  Let $\{U_{i,j}\}_{j\in\N}\subset \mathcal{U}_i$ be a countable subset which still covers $V_i$.  For each $i,j\in\N$, let $E_{i,j}$ be a finite union of disks such that 
 \begin{gather}
\label{containment}
 E_{i,j}\subset B_i.\\
 \label{eq:Uij in phi Eij} 
 \phi^*(E_{i,j})\supset U_{i,j}
 \\
 \label{choice_of_epsilon_bound}
     |(\phi^*(E_{i,j})\setminus U_{i,j}) \cap S_m |\leq 2^{-(m+i+j)}.
 \end{gather}  
 Let $\cB_m$ be the (countable) collection of disks making up the sets $E_{i,j}$, as $i,j$ range over $\N$.  We note for future reference that we have
 \begin{equation}
     \label{decomposition}
 \cup\cB_m=\bigcup_{i,j}E_{i,j}
 \end{equation}
 and
 Now that we have defined $\cB_m$, it remains only to verify properties \eqref{item:B_m cover}--\eqref{item:nested B_m}.  To prove \eqref{item:B_m cover}, let $y\in \newA_m$.  For fixed $i\in\N$, the family $\{U_{i,j}\}_{j\in\N}$ covers $\newA_m\cap \phi^*(B_i)$ by construction.  Since $\{B_r\}_r$ is an enumeration of the disks in $\cB_{m-1}$, and since $\phi^*(\cup\cB_{m-1})\supset \newA_{m-1}\supset \newA_m$ by \eqref{item:B_m cover} applied to $\cB_{m-1}$, the family $\{U_{i,j}\}_{i,j\in\N}$ covers $\newA_m$.  This means $y\in U_{i,j}$ for some $i,j\in\N$, hence $y\in \phi^*(E_{i,j})\subset \phi^*(\cup\cB_m)$ by \Cref{base} and definition of $\cB_m$.  This proves \eqref{item:B_m cover}.

To prove \eqref{item:B_m efficient}, by \eqref{decomposition}, \eqref{choice_of_epsilon_bound}, and the fact that $U_{i,j}\subset \newA_m$, we have that
\[
|(\phi^*(\cup \cB_m) \setminus \newA_m) \cap S_m| \leq \sum_{i,j} |\phi^*(E_{i,j})\setminus U_{i,j} \cap S_m|\leq \sum_{i,j}2^{-(m+i+j)}=2^{-m}.
\] 

Finally, we prove \eqref{item:nested B_m}. Recall that $\cB_{m-1} = \{B_i\}_i$. Suppose $y\in \phi^*(B_i)\cap \newA_m$ for some $i$. Then $y\in U_{i,j}$ for some $j$ (since $\{U_{i,j}\}_{i,j}$ was constructed to cover that set), so by \eqref{eq:Uij in phi Eij}, it follows that $y\in \phi^*(E_{i,j})$.  Since $\cB_m$ is the collection of disks making up the sets $E_{i,j}$, it follows that there is a $B'\in \cB_m$ with $y\in \phi^*(B')$.  By \eqref{containment} we also have $B'\subset E_{i,j} \subset B_i$, completing the proof of \eqref{item:nested B_m}.
\end{proof}

\section{Geometric applications}
\label{applications}

\subsection{Efficient covering theorems}
Prescribed projections have an equivalent formulation in terms of covering arbitrary measurable sets in such a way that the excess has measure zero error.  More precisely, we make the following definition.
\begin{dfn}
Let $S\subset \R^2$.  We say that a parameterized family of sets $\cF=\{S_x:x\in X\}$ in $\R^2$ \textbf{yields efficient coverings} of subsets of $S$ if, for any measurable $F\subset S$, there exists $E\subset X$ such that
\[
\bigcup_{x\in E} S_x\supset F
\]
and
\[
\left|\left(\bigcup_{x\in E} S_x\right)\setminus F\right|=0.
\]
\end{dfn}

For $x\in \R^2$, define $A_x=\{\alpha\in \R: (\alpha,x)\in D$\}, and for $E\subset \R^2$, let $A(E)=\bigcup_{x\in E}A_x$.  Let $\mathcal{A}=A(\R^2)$.

\begin{thm}[Efficient covering theorem]
\label{efficientcoveringtheorem}
Let $\phi$ satisfy the hypotheses of \Cref{thm:main} (or, more generally, let $\phi$ be admissible), let
\[
X=\{x\in\R^2: A_x\neq\emp\},
\]
and for each $x\in X$ let
\[
S_x=\{(\alpha,\phi_\alpha(x)):\alpha\in A_x\}.
\]
The family $\cF=\{S_x:x\in X\}$ yields efficient coverings of subsets of $\bigcup_{\al\in \cA}\{\al\}\times \phi_\al(D_\al)$. 
\end{thm}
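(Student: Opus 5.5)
This is a direct reformulation of \Cref{prescribedprojection}. The plan is to show that $\bigcup_{x\in E}S_x$ coincides with $\phi^*(E)$, and then take $E$ to be the set produced by that theorem.

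\textbf{Step 1: identify the covering set.} For any $x\in\R^2$, the definitions of $A_x$ and $\phi^*$ give
\[
S_x=\{(\alpha,\phi_\alpha(x)):(\alpha,x)\in D\}=\phi^*(\{x\}).
\]
Taking unions, for any $E\subset\R^2$ we get
\[
\bigcup_{x\in E}S_x=\bigcup_{x\in E}\phi^*(\{x\})=\phi^*(E),
\]
because $\phi^*$ commutes with unions by \eqref{stardef}. Points $x\notin X$ have $S_x=\emp$ and $\phi^*(\{x\})=\emp$, so we may always replace $E$ by $E\cap X$ without changing either side.

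\textbf{Step 2: identify the ambient set.} We check that $\bigcup_{\al\in\cA}\{\al\}\times\phi_\al(D_\al)=\phi^*(\R^2)$. This holds because $\phi_\al(\R^2)=\phi_\al(\R^2\cap D_\al)=\phi_\al(D_\al)$ under the convention fixed earlier, and for $\al\notin\cA$ the slice is empty.

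\textbf{Step 3: apply the prescribed projection theorem.} Let $F$ be a measurable subset of this set, so $F\subset\phi^*(\R^2)$. \Cref{prescribedprojection} gives a set $E$ with $\phi^*(E)\supset F$ and $|\phi^*(E)\setminus F|=0$. By Step 1, $E\cap X$ then satisfies the two conditions in the definition of an efficient covering.

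Under the hypotheses of \Cref{thm:main}, $\phi$ is admissible by \Cref{lem:rectangle squishing condition}, so \Cref{prescribedprojection} applies in that case as well. The only point needing care is the bookkeeping of domains ($A_x$ versus $D_\al$, and the convention for $\phi_\al(E)$); there is no analytic obstacle.
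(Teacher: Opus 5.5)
Your proposal is correct and follows the paper's proof. The paper also shows $\bigcup_{x\in E}S_x=\phi^*(E)$, identifies the ambient set with $\phi^*(\R^2)$, and then invokes \Cref{prescribedprojection}. Your replacement of $E$ by $E\cap X$, which makes $E\subset X$ as the definition of efficient covering requires, is a small bookkeeping step that the paper leaves implicit.
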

\begin{proof}
It will be convenient to work in the notation of \Cref{prescribedprojection} rather than \Cref{thm:main}.  We observe
\begin{align*}
\phi^*(E)&=\bigcup_{\alpha\in A(E)}\{\alpha\}\times \phi_\alpha(E) \\
&=\bigcup_{\alpha\in A(E)}\left(\bigcup_{x\in E\cap D_\alpha}\{(\alpha,\phi_\alpha(x))\}\right) \\
&=\bigcup_{x\in E}\left(\bigcup_{\alpha\in A_x}\{(\alpha,\phi_\alpha(x))\}\right) \\
&= \bigcup_{x\in E}S_x.
\end{align*}
The result then follows directly from Theorem \ref{prescribedprojection}.
\end{proof}

As a fundamental example, we note that this theorem unifies Davies' Theorem with the work of 
Chang, McDonald, and Taylor in \cite{CMT23}.  In that paper, the authors prove that if $\Gamma$ is the graph of a $C^1$ function on a compact interval with strictly monotone derivative, then the family of translates $\Gamma+x$ yields efficient coverings.  Although the proof techniques are heavily influenced by the proof of Davies' theorem on efficient coverings by lines, it does not include Davies' theorem as a special case since the family of lines does not consist of translates of any single line.  Our first two examples show that Theorem \ref{efficientcoveringtheorem} unifies\footnote{The result in \cite{CMT23} made the convention that domains were compact, whereas we make the convention that domains are open.  However, this should not be seen as a significant discrepancy.} these two results into one general framework.
\begin{ex}[Davies' Theorem \cite{Davies52}]
\label{davies}
Theorem \ref{efficientcoveringtheorem} applies to the family of lines.  More specifically, for any open interval $I$, we have efficient covering by lines with slopes in $I$.  Without loss of generality suppose $I$ is bounded, let $X=I\times\R$, and for $x\in X$ and $\alpha\in \R$ define
\[
\phi_\alpha(x)=x_1\alpha+x_2.
\]
In the notation of Theorem \ref{efficientcoveringtheorem}, we have
\[
S_x=\{(s,t)\in \R^2:t=x_1s+x_2\}.
\]
Clearly $\phi^*(\R^2)=\R^2$, so it suffices to show $\phi:\R\times X\to \R$ satisfies the hypotheses of Theorem \ref{thm:main}.  To show Condition (\ref{cond nsam}), we compute
\[
\nabla \phi_\alpha(x)=(\alpha,1).
\]
Thus, $\phi$ is non-stationary, and for any $x$ the map $\al\mapsto\theta_\al(x)$ is strictly decreasing when angles are defined relative to the positive horizontal axis.  Condition (\ref{cond ext}) is trivially satisfied, as $\phi$ and $\nabla_x\phi$ extend to continuous functions on $\R^3$ with $\nabla_x\phi \neq 0$, and Condition (\ref{cond conv}) (convexity of $D_\al = X = I \times \R$) is obvious.
\end{ex}
\begin{ex}[\protect{\cite[Theorem 1.1]{CMT23}}]
Let $f:[a,b]\to \R$ be a $C^1$ function with continuous and strictly monotone derivative, and let $\Ga^\circ$ be the graph of $f$ with the endpoints removed.  Theorem \ref{efficientcoveringtheorem} applies to the family of translates $S_x:=\Ga^\circ+x$.  Let 
\[
D=\{(\al,x):a+x_1<\al<b+x_1\},
\]
and define
\[
\phi_\alpha(x)=f(\alpha-x_1)+x_2.
\]
We then have
\begin{align*}
S_x&=\{(\alpha,f(\alpha-x_1)+x_2):\alpha\in (a,b)+x_1\} \\
&=\{(t,f(t)):a<t<b\}+x.
\end{align*}
Again, it is clear $\phi^*(\R^2)=\R^2$, so it suffices to verify the hypotheses of Theorem \ref{thm:main}.  We have
\[
\nabla \phi_\alpha(x)=(-f'(\alpha-x_1),1),
\]
so $\phi$ is non-stationary, and angle monotonicity follows from the assumption that $f'$ is strictly monotone.  This proves Condition (\ref{cond nsam}).  Condition (\ref{cond ext}) follows from the assumption that $f$ is continuously differentiable on $[a,b]$, and Condition (\ref{cond conv}) is clear.
\end{ex}

In addition to unifying previous results, Theorem \ref{efficientcoveringtheorem} provides efficient covering by many families of curves to which previous results are not applicable.  The following example concerns one such family, and serves to illustrate the flexibility of our theorem.

\begin{ex}
Let $\cF$ be the family of upper quarter circles with centers on the horizontal axis, of arbitrary radius.  The family $\cF$ yields efficient coverings of subsets of the upper half plane.  To prove this, we first observe that it is sufficient to consider the subfamily $\cF_\e$ of quarter circles with radius at least $\e$.  Since any subset of the upper half plane can be partitioned into countably many pieces where each is bounded away from the axis, the general result follows.  We can parameterize such circles by points $x=(x_1,x_2)\in \R\times (\e,\infty)$, where we think of $x_1$ as describing the center of the circle and $x_2$ as describing the radius.  More precisely, define
\[
D=\left\{(\al,x_1,x_2)\in \R\times\R\times (\e,\infty):x_1-\frac{x_2}{\sqrt{2}}<\al<x_1+\frac{x_2}{\sqrt{2}}\right\},
\]
and define projections $\phi:D\to (\frac{\e}{\sqrt{2}},\infty)$ by
\[
\phi_\alpha(x)=\sqrt{x_2^2-(\al-x_1)^2}.
\]
In the notation of Theorem \ref{efficientcoveringtheorem}, we then have
\[
S_x=\left\{(\alpha,\beta)\in \R\times (0,\infty): (\alpha-x_1)^2+\beta^2=x_2^2, |\alpha-x_1|<\frac{x_2}{\sqrt{2}}\right\},
\]
the upper quarter circle centered at $(x_1,0)$ of radius $x_2$.  We compute
\[
\nabla\phi_\al(x)=\frac{1}{\phi_\al(x)}(\al-x_1,x_2).
\]
It follows immediately that $\phi$ satisfies angle monotonicity, and the bound $x_2>\e$ implies that the function is non-stationary.  This proves Condition (\ref{cond nsam}).  By the bound $\phi_\al(x)>\e/\sqrt{2}$ on $D$, we get continuous extension to the boundary, proving Condition (\ref{cond ext}).  Finally, for each $\al$ the domain
\[
D_\al=\{(x_1,x_2)\in \R\times (\e,\infty):|x_1-\al|<\frac{x_2}{\sqrt{2}}\}
\]
is convex, proving (\ref{cond conv}).
\end{ex}

\subsection{Pinned distance sets}\label{pinned section}
\begin{dfn}
Let $E\subset \R^2$.  The \textbf{distance set} of $E$ is the set
\[
\Delta(E)=\{|x-y|:x,y\in E\}.
\]
For fixed $y\in \R^2$ (not necessarily in $E$), the \textbf{pinned distance set} at $y$ is
\[
\Delta_y(E)=\{|x-y|:x\in E\}.
\]
\end{dfn}
\begin{thm}[Prescribed pinned distance sets]
For each $\al\in\R$, let $F_\al\subset (0,\infty)$ be such that
\[
\bigcup_{\al\in\R}\{\al\}\times F_\al
\]
is measurable.  There exists a set $E\subset \R^2$ such that for almost every $\al$ we have
\[
\De_{(\al,0)}(E)\supset F_\al \hspace{.25in}\text{and}\hspace{.25in}|\De_{(\al,0)}(E)\setminus F_\al|=0.
\]
\end{thm}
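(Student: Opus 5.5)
We will deduce the statement from \Cref{thm:main} (or \Cref{thm:main v2}) by choosing a projection family whose fibers are circles centred at the pins. The natural choice is
\[
\phi_\al(x)=|x-(\al,0)|=\sqrt{(x_1-\al)^2+x_2^2}.
\]
With this choice, $\phi_\al(E\cap D_\al)\subset\De_{(\al,0)}(E)$. We take $D$ inside the open upper half plane in $x$ (reflection handles the lower half, or we simply never use it), restricted so that the hypotheses hold. The level curves of $\phi_\al$ are circles about $(\al,0)$, so $\theta_\al(x)$ is the direction perpendicular to $x-(\al,0)$. As $\al$ increases with $x$ fixed and $x_2>0$, the direction of $x-(\al,0)$ rotates strictly monotonically, sweeping an arc of length less than $\pi$. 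Hence angle monotonicity holds on all of $\{x_2>0\}$, and $\nabla\phi_\al(x)=(x-(\al,0))/|x-(\al,0)|\neq0$ there.

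Boundary conditions (2) require $\nabla_x\phi$ to extend continuously and nonvanishingly to $\overline D$. This fails near the pin, but $x_2=0$ only matters near the pin. To avoid it we work with truncated domains and decompose. For each $\varepsilon>0$, let
\[
D^\varepsilon=\{(\al,x):x_2>\varepsilon,\ |x_1-\al|<x_2\},
\]
a convex cone slice. On $\overline{D^\varepsilon}$ we have $|x-(\al,0)|\ge\varepsilon$, so the unit gradient extends continuously and is nonzero. However, $\phi_\al(D^\varepsilon_\al)=(\sqrt2\,\varepsilon,\infty)$, so distances below $\sqrt2\,\varepsilon$ are not covered. The cone condition is not essential; any convex slice bounded away from the pin works, for example $D^\varepsilon_\al=\{x_2>\varepsilon\}$. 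In that case $\phi_\al(D^\varepsilon_\al)=(\varepsilon,\infty)$, since $\phi_\al(\al,x_2)=x_2$ and $\phi_\al$ is unbounded above. This half-plane domain is convex and uniform in $\al$, and angle monotonicity holds because $x_2>0$. The remaining obstruction is that $\overline{D^\varepsilon}$ is unbounded. Conditions (2) only ask for continuous extension and nonvanishing, and we have $|\nabla_x\phi|=1$ everywhere, so this is fine. The rectangle squishing condition is proved in \Cref{lem:rectangle squishing condition} only on bounded $D'$, which is all it needs.

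We then decompose. Let $\varepsilon_k=2^{-k}$ and set $F_\al^k=F_\al\cap[\varepsilon_k,\varepsilon_{k-1})$ for $k\ge1$, with $F_\al^0=F_\al\cap[1,\infty)$. Each $F_\al^k$ lies in $(\varepsilon_{k+1},\infty)=\phi_\al(D^{\varepsilon_{k+1}}_\al)$, and $\bigcup_\al\{\al\}\times F^k_\al$ is measurable. Applying \Cref{thm:main} with $D^{\varepsilon_{k+1}}$ gives Borel sets $E_k\subset\{x_2>\varepsilon_{k+1}\}$ with $\phi_\al(E_k)\supset F_\al^k$ for all $\al$, and $|\phi_\al(E_k)\setminus F^k_\al|=0$ for a.e.\ $\al$.

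The main obstacle is the gluing step. If we set $E=\bigcup_kE_k$, then the error satisfies $\De_{(\al,0)}(E)\setminus F_\al\subset\bigcup_k(\phi_\al(E_k)\setminus F_\al^k)$, so it is null for a.e.\ $\al$, as a countable union of null sets with a countable intersection of full-measure $\al$-sets. The containment $\De_{(\al,0)}(E)\supset F_\al$ holds for every $\al$.

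The one subtlety is that the pinned distance set uses all of $E$, not $E\cap D_\al$. With the half-plane domains, $D_\al$ contains all of $E_k$, so $\phi_\al(E_k\cap D_\al)=\Delta_{(\al,0)}(E_k)$ exactly, and no stray distances arise. This is why the half-plane choice $D^\varepsilon_\al=\{x_2>\varepsilon\}$ is preferable to the cone.
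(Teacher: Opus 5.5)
Your proof is correct and follows the paper's route: apply \Cref{thm:main} to a distance-type function on the half-plane domain $\{x_2>\varepsilon\}$, which stays away from the pins, and then take a countable union over truncation levels. The paper uses the squared distance $(x_1-\al)^2+x_2^2$ on $\{x_2>\epsilon/2\}$ with nested truncations $F_\al^2\cap(\epsilon^2,\infty)$, whereas you use $|x-(\al,0)|$ with a dyadic partition of $F_\al$, which changes nothing. Your explicit check that $E_k\subset D_\al$, so that $\phi_\al(E_k\cap D_\al)=\De_{(\al,0)}(E_k)$, is a point the paper leaves implicit. The only slip is in your discarded aside about the cone: its image is $(\varepsilon,\infty)$, not $(\sqrt2\,\varepsilon,\infty)$.
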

\begin{proof}
For technical reasons, it will be simpler to work with the squared sets
\[
F_\al^2:=\{t^2:t\in F_\al\}.
\]
For $\epsilon > 0$, define
\[
F_{\al,\epsilon}^2:=\{t^2:t\in F_\al\} \cap (\epsilon^2, \infty).
\]
Define $\phi:\R\times (0, \epsilon/2) \times (0,\infty)\to (0,\infty)$ by
\[
\phi_\alpha(x_1,x_2)=|x-(\alpha,0)|^2=(x_1-\alpha)^2+x_2^2.
\]
With this definition, we have $\Delta_{(\alpha,0)}^2(E)=\phi_\alpha(E)$, where $\Delta^2$ denotes the set of squares of distances rather than distances themselves.  If Theorem \ref{thm:main} applies, then there exists $E$ such that $\Delta_{(\alpha,0)}^2(E)\supset F_{\alpha, \epsilon}^2$ and $|\Delta_{(\alpha,0)}^2(E)\setminus F_{\alpha, \epsilon}^2|=0$ for almost every $\alpha$.  Since $t\mapsto t^2$ is a bijection on $(0,\infty)$ which preserves the property of having measure zero, it is enough to verify the hypotheses of Theorem \ref{prescribedprojection}.  Clearly, $\phi_\alpha$ has range $(\epsilon^2/4,\infty)$ for each $\alpha$.  We have
\[
\nabla \phi_\al(x)=2(x_1-\al,x_2),
\]
so $\phi$ satisfies the angle monotonicity condition, is non-stationary (by the assumption $x_2>\epsilon/2$), and has derivatives extending continuously to the boundary.  Since convexity is clear, the result follows for $F_{\alpha, \epsilon}^2$. We can prove the result for $F_{\alpha}^2$ by taking a countable union over a sequence of values of $\epsilon$ that approach zero.
\end{proof}

\subsection{Radial projections}

\begin{dfn}
Let $\T$ denote the unit circle in $\R^2$.  For $a\in \R^2$, the \textbf{radial projection} to $a$ is the map 
\[
P_a:\R^2\setminus \{a\}\to \T
\]
defined by
\[
P_a(x)= \frac{x-a}{\|x-a\|}.
\]
\end{dfn}

\begin{thm}
For each $\alpha \in \mathbb{R}$, let $F_{\alpha} \subset \mathbb{T}$ be a set that does not contain $(1,0)$ or $(-1,0)$, and suppose that
\[\{(\alpha, t) : \alpha \in \mathbb{R} : (\cos t, \sin t) \in F_{\alpha}\}\]
is measurable. Suppose $y^- < 0 < y^+$ are real numbers, and suppose $U$ is an open neighborhood of the horizontal lines $\{(x,y^{-}) : x \in \mathbb{R}\}$ and $\{(x, y^+) : x \in \mathbb{R}\}$. Then there exists a set $E \subset U$ such that $P_{(\alpha,0)}(E) \supset F_{\alpha}$ and $|P_{\alpha, 0}(E) \setminus F_{\alpha}| = 0$ for almost every $\alpha \in \mathbb{R}$.
\end{thm}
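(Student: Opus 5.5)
The plan is to reduce to \Cref{thm:main} with a cotangent-type projection, applied separately near the two lines and on countably many bounded boxes.

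\textbf{Reparametrization and admissibility.} For $x=(x_1,x_2)$ with $x_2\neq 0$, the direction $P_{(\alpha,0)}(x)=(\cos t,\sin t)$ lies in the upper half circle when $x_2>0$ and in the lower half when $x_2<0$. In either case it is determined by $\cot t=(x_1-\alpha)/x_2$. I therefore set
\[
\phi_\alpha(x)=\frac{x_1-\alpha}{x_2}.
\]
The map $t\mapsto\cot t$ is a diffeomorphism $(0,\pi)\to\R$, and likewise $(\pi,2\pi)\to\R$. Since $F_\alpha$ avoids $(\pm1,0)$, split $F_\alpha=F_\alpha^+\cup F_\alpha^-$ into its upper and lower parts, and let $G_\alpha^\pm=\cot(F_\alpha^\pm)\subset\R$. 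The map $(\alpha,t)\mapsto(\alpha,\cot t)$ is a local diffeomorphism of the plane on each half. Hence $\bigcup_\alpha\{\alpha\}\times G^\pm_\alpha$ is measurable, and null sets in $t$ correspond exactly to null sets in $\cot t$. It therefore suffices to produce $E^+$ near $\{x_2=y^+\}$ and $E^-$ near $\{x_2=y^-\}$ with $\phi_\alpha(E^\pm)\supset G^\pm_\alpha$ for all $\alpha$ and $|\phi_\alpha(E^\pm)\setminus G^\pm_\alpha|=0$ for a.e.\ $\alpha$. Because $E^+$ only produces upper directions and $E^-$ only lower ones, $E=E^+\cup E^-$ then works.

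The hypotheses of \Cref{thm:main} are easy to check on a region bounded away from $\{x_2=0\}$:
\begin{itemize}
\item $\nabla\phi_\alpha(x)=\bigl(1/x_2,\,-(x_1-\alpha)/x_2^2\bigr)\neq 0$, so $\phi$ is non-stationary.
\item The level curves of $\phi_\alpha$ are the lines through $(\alpha,0)$. So $\theta_\alpha(x)$ is the direction of $x-(\alpha,0)$, which is strictly monotone in $\alpha$ with range inside an arc of length $<\pi$. After reversing orientation (as in \Cref{davies}), this gives angle monotonicity.
\item $\phi$ and $\nabla_x\phi$ extend continuously to the closure, with nonvanishing gradient, as long as $x_2$ stays bounded away from $0$.
\end{itemize}

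\textbf{Localization to boxes.} The obstacle is that $U$ need not contain a uniform strip around the line $x_2=y^+$, since it may pinch toward infinity. Moreover, whatever $E$ we build is seen from every $(\alpha,0)$, not only from the $\alpha$ allowed by the domain. To handle both issues, for each $n\in\Z$ pick $h_n\in(0,y^+/2)$ so that the open box $R_n=(n-1,n+2)\times(y^+-h_n,y^++h_n)$ lies in $U$. This is possible because $[n-1,n+2]\times\{y^+\}$ is compact. Then take $D_n=\R\times R_n$, which has convex slices $D_{n,\alpha}=R_n$ and satisfies condition (2) on its bounded closure. Prescribe
\[
G^{+}_{n,\alpha}=G^+_\alpha\cap\phi_\alpha(R_n)\qquad\text{for every }\alpha\in\R.
\]
This family is measurable, being the intersection with the open set $\phi^*(R_n)$ (\Cref{lem: openness of phi star}). \Cref{thm:main} then gives a Borel set $E_n\subset R_n$. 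For every $\alpha$ the radial projections of $E_n$ contain $G^+_{n,\alpha}$, and for a.e.\ $\alpha$ they exceed it by a null set only. The essential point is that $E_n\subset R_n=D_{n,\alpha}$ for all $\alpha$, so no uncontrolled projections arise.

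\textbf{Assembly.} Set $E^+=\bigcup_n E_n\subset U$. For containment: for any $\alpha$ and any $s\in G^+_\alpha$, the ray from $(\alpha,0)$ with $\cot t=s$ crosses $x_2=y^+$ at $x_1=\alpha+y^+s$, which lies in some $R_n$. So $s\in\phi_\alpha(R_n)$, hence $s\in G^+_{n,\alpha}\subset\phi_\alpha(E_n)$. For the error: $\phi_\alpha(E^+)\setminus G^+_\alpha\subset\bigcup_n\bigl(\phi_\alpha(E_n)\setminus G^+_{n,\alpha}\bigr)$, and a countable union of exceptional $\alpha$-null sets and of null error sets is null. Repeating the construction at $y^-$ gives $E^-$. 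Transporting back through $\cot$ then yields the theorem.

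I expect the main care to be needed in this localization step, not in verifying the hypotheses of \Cref{thm:main}.
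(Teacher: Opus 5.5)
Your proposal is correct and follows essentially the same route as the paper: split into the upper and lower half-planes, encode $P_{(\alpha,0)}$ by a scalar function whose level sets are the rays from $(\alpha,0)$, apply \Cref{thm:main} with $D=\R\times R$ for a box $R\subset U$ (so that $E\subset D_\alpha$ for every $\alpha$), and take a countable union. The differences are only bookkeeping: you use $\cot t=(x_1-\alpha)/x_2$ where the paper uses the angle itself via $\cos^{-1}$, and you localize with fixed-width boxes $R_n$ and prescriptions $G^+_\alpha\cap\phi_\alpha(R_n)$, whereas the paper truncates to $|\alpha|\le N$ and angles in $[1/N,\pi-1/N]$ and uses one long box $R_N$ for each $N$ (as a small aside, in the upper half-plane $\alpha\mapsto\theta_\alpha(x)$ is already increasing, with range the whole open arc $(0,\pi)$ rather than an arc of length $<\pi$, which still satisfies the definition).
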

\begin{rmk}
    It is interesting to note that a dual version of the prescribed radial projection theorem yields a covering theorem by line segments.  Since we already recover the stronger result for coverings by full lines above in Example \ref{davies}, we omit additional details. 
\end{rmk}
\begin{proof}
Observe that by taking a union of a set $E^+$ in the upper half-plane and $E^-$ in the lower half-plane, we may assume that the set $F_{\alpha}$ is contained in the open upper semicircle for each $\alpha$, and we may assume that $U$ is a neighborhood of the horizontal line $x_2 = y^+$. We make this assumption for the rest of the proof.

We define a function $\theta$ on the upper semicircle $\{(x_1, x_2) : x_1^2 + x_2^2 = 1, x_2 > 0\}$ by $\theta(x_1, x_2) = \cos^{-1}(x_1) \in (0, \pi)$. For each $\alpha$, let $G_{\alpha} = \theta(F_{\alpha})$, so that $G_{\alpha}$ is a subset of $(0, \pi)$ for each $\alpha$. Observe that the map $\theta$ is a bijection from the upper semicircle onto $(0, \pi)$ that preserves measure-zero sets. Therefore, if we define 
\[\phi_{\alpha}(x_1, x_2) := \cos^{-1} \left(\frac{x_1 - \alpha}{\sqrt{(x_1 - \alpha)^2 + x_2^2}} \right),\]
it is enough to find a set $E^+ \subset U$ such that $G_{\alpha} \subset \phi_{\alpha}(E^+)$ and $|\phi_{\alpha}(E^+) \setminus G_{\alpha}| = 0$ for almost all $\alpha \in \mathbb{R}$.

We would also like to avoid considering angles that are too close to $0$ or $\pi$. To do this, for each $N \in \mathbb{N}$, we define the set $G_{\alpha,N}$ as follows:
\[G_{\alpha,N} = \begin{cases}
G_{\alpha} \cap \left[\frac{1}{N}, \pi - \frac{1}{N} \right] & \text{ if $|\alpha| \leq N$} \\
\varnothing & \text{ if $|\alpha| > N$}.\end{cases}\]
We will show for each $N$ that there exists a set $E_N^+ \subset U$ such that $\phi_{\alpha}(E_N^+) \supset G_{\alpha, N}$ and $|\phi_{\alpha}(E_N^+) \setminus G_{\alpha, N}| = 0$ for almost every $\alpha \in \mathbb{R}$. Taking the union of the sets $E_N^+$ gives the desired set $E^+$.

Choose $0 < \delta_N < \frac{y^+}{100}$ such that the rectangle
\[R_N := \left(- N - y^+ \cot \left(\frac{1}{2N} \right), N + y^+\cot \left( \frac{1}{2N} \right) \right) \times (y^+ - \delta_N, y^+ + \delta_N) \subset U.\]
We claim that $G_{\alpha, N} \subset \phi_{\alpha}(R_N)$ for all $\alpha$. In fact, a simple calculation shows that if $\alpha \in [-N, N]$, then the image $\phi_{\alpha}(R_n)$ satisfies the containment

\[\left(\frac{1}{2N}, \pi - \frac{1}{2N} \right) \subset \phi_{\alpha}(R_N).\]
This containment shows that $\phi_{\alpha}(R_N)$ contains $G_{\alpha, N}$ for all $\alpha$ with $|\alpha| \leq N$.

We wish to apply Theorem 1.2 to $\phi_{\alpha}$ with $D_{\alpha} = R_N$ for each $\alpha$. Clearly $D_{\alpha}$ is convex for all $\alpha$. We now check that $\phi$ is nonstationary and satisfies the angle monotonicity condition. We need to compute the gradient of $\phi_{\alpha}$. To simplify notation, we define $\psi_{\alpha}(x) := |x - (\alpha,0)| = \sqrt{(x_1 - \alpha)^2 + x_2^2}$. With this notation, we have
\begin{equation}\label{eq:nablaphialpha}
\nabla \phi_{\alpha}(x) = \frac{x_2}{\psi_{\alpha}(x)^3 \sqrt{1 - \left( \frac{x_1 - \alpha}{\psi_{\alpha}(x)} \right)^2}} (-x_2, x_1 - \alpha).
\end{equation}
If $(x_1, x_2) \in \overline{R_N}$, the numbers $x_2$ and $\psi_{\alpha}(x)$ are at least $y^+ - \delta_n$, and $\psi_{\alpha}(x)$ is bounded above by a number that depends only on $N$, $\alpha$ and $y^+$. Therefore $\nabla \phi_{\alpha}$ is nonstationary on $D$, and both $\phi_{\alpha}$ and $\nabla \phi_{\alpha}$ can be continuously extended to $\overline{D}$. The angle monotonicity condition is also obviously satisfied since only the second coordinate of $\frac{\nabla \phi_{\alpha}(x)}{\norm{\nabla \phi_{\alpha}(x)}}$ depends on $\alpha$; in fact, it is obvious both from the geometry of the problem and the explicit expression \eqref{eq:nablaphialpha} that $\nabla \phi_{\alpha}$ will always be orthogonal to the vector $(x_1 - \alpha, x_2)$. Therefore, $\phi_{\alpha}$ meets all of the conditions required to apply Theorem \ref{thm:main}, guaranteeing the existence of the set $E_N^+$.
\end{proof}

\subsection{Nikodym-type sets}

\Cref{thm:main} can be used to construct Nikodym-type sets for general families of curves. This is done, for example, in the forthcoming paper \cite{ccgsyz}.

\bibliographystyle{plain}
\bibliography{refsprojections}

\end{document}